\documentclass{amsart}

\theoremstyle{plain}
\newtheorem{theorem}{Theorem}[section]
\newtheorem{proposition}[theorem]{Proposition}

\newtheorem{corollary}[theorem]{Corollary}

\theoremstyle{definition}
\newtheorem{assumption}{Assumption}

\theoremstyle{remark}
\newtheorem{remark}{Remark}

\title[EK ASYMPTOTICS FOR INFINITE-DIMENSIONAL GRADIENT SYSTEMS]{Eyring--Kramers asymptotics for infinite-dimensional stochastic gradient systems}

\author[M. Brooks]{Morris Brooks}
\address{Institute of Mathematics, University of Zurich, Winterthurerstrasse 190, CH-8057 Zurich, Switzerland}
\email{morris.brooks@math.uzh.ch}

\author[G. Di Gesu]{Giacomo Di Gesu}
\address{Dipartimento di Matematica, Sapienza Universit\`a di Roma, Piazzale Aldo Moro 5, 00185 Roma, Italy}
\email{giacomo.digesu@uniroma1.it}

\subjclass[2020]{Primary 60H15, 60J60; Secondary 35K90, 35P15, 47D07, 82C31}
\keywords{Eyring--Kramers law, spectral gap, metastability, stochastic Allen--Cahn equation, stochastic Cahn--Hilliard equation, Gaussian measures, Dirichlet forms}

\begin{document}

\begin{abstract}

We study small-noise asymptotics     
 for a class of reversible stochastic evolution equations in infinite dimensions. The dynamics are of the form
\[
dX_t=-A\nabla F(X_t)\,dt+\sqrt{2\beta^{-1}A}\,dW_t,
\]
where $F$ is a regular multi-well potential, $A$ is a selfadjoint mobility operator,  $W$ is a cylindrical Brownian motion  and $\beta\gg 1$ is the inverse noise strength. 
The invariant measure is a Gibbs perturbation of a Gaussian reference measure, and the resulting framework covers, in particular, the stochastic Allen--Cahn and stochastic Cahn--Hilliard equations on bounded intervals. 
In the double-well case, we derive a sharp asymptotic formula for the first nonzero eigenvalue of the generator. 
This gives an infinite-dimensional Eyring--Kramers law for the spectral gap, with exponential rate determined by the communication height and leading prefactor determined by the local quadratic behavior at the relevant minima and saddle points.
Our approach provides a general strategy for lifting finite-dimensional Eyring--Kramers analysis to infinite-dimensional stochastic gradient systems.

\end{abstract}

\maketitle

\section{Introduction}

\noindent
We consider stochastic gradient dynamics formally described by
 \begin{equation}  \label{eq} 
  dX_t=-A\nabla F(X_t) \, dt+\sqrt{2\beta^{-1}A} \, dW_t .\end{equation} Here $W$ is a cylindrical Brownian motion on a separable Hilbert space $H$, the potential $F \in C^2(H)$ 
satisfies suitable coercivity assumptions at infinity 
 and $A$ is a selfadjoint uniformly positive operator on $H$. The parameter $\beta>0$ is the inverse noise strength, and we are interested in the small-noise regime $\beta\to\infty$.

Heuristically, the drift in \eqref{eq} drives the dynamics towards the local minima of the functional $F$, while the noise creates rare transitions between different wells. When $F$ has several local minima, this competition gives rise to metastable behavior: on moderate time scales the process appears trapped near one minimum, whereas on exponentially long time scales transitions between different wells become visible. 
A central quantity encoding this phenomenon is the spectral gap $\lambda_\beta$, namely the first nonzero eigenvalue of the generator. Its inverse $\lambda_\beta^{-1}$ corresponds to the relaxation time of the dynamics and is therefore expected to be exponentially large in the small-noise regime. More generally, if the energy landscape has $k_0$ local minima, one expects a cluster of $k_0$ low-lying eigenvalues: the zero eigenvalue corresponding to equilibrium, together with $k_0-1$ positive eigenvalues which are exponentially small in $\beta$. These eigenvalues describe the effective transitions between the metastable wells, while the rest of the spectrum remains bounded away from zero and corresponds to faster relaxation modes within the wells.

When $\dim(H)<\infty$, this picture is by now classical. Large deviations theory identifies the exponential scale of rare transitions \cite{FreidlinWentzell1998}, while sharp asymptotic estimates can be obtained through potential-theoretic and semiclassical methods; see, for instance, \cite{BovierEckhoffGayrardKlein2004,HelfferKleinNier2004,Eckhoff2005,MenzSchlichting2014,DGLLLPN2019,AvelinJulinViitasaari2023}, and see also \cite{LandimMarianiSeo2019,LeeSeo2022,LePeutrecMichel2020} for non-reversible extensions.
For example, suppose that $H=\mathbb R^d$, that $A$ is a symmetric positive definite matrix, and that $F$ has exactly two local minima $x_-$ and $x_+$ with $F(x_-)<F(x_+)$, separated by a unique nondegenerate saddle point $z$ of index one. Let $-\alpha<0$ be the unique negative eigenvalue of 
 the Hessian of $F$ with respect to the modified scalar product $\langle A^{-1}\cdot, \cdot \rangle$ and
 assume that $\nabla^2F(x_\pm)>0$. 
Then, as $\beta\to\infty$,  the spectral gap satisfies 
the Eyring--Kramers law
\begin{equation}    \label{FDEK} \lambda_\beta = \frac{\alpha}{2\pi} \sqrt{ \tfrac{\det \nabla^2F(x_+)} {|\det \nabla^2F(z)|} } e^{-\beta[ F(z)-F(x_+)]}  \,   \left(1+o(1) \right). 
\end{equation}
Further, $ \lambda_\beta$ is  separated from the rest of the spectrum by a gap of order one.
The determinant ratio in~\eqref{FDEK} comes from Gaussian approximations of the invariant measure $Z_\beta^{-1}e^{-\beta F(x)} dx$ near the minimum $x_+$ and the saddle $z$, while the factor $\alpha/(2\pi)$ accounts for the unstable direction at the saddle.

\

The goal of this paper is to establish an analogue of this picture in infinite dimensions. In this setting, the cylindrical Brownian motion in \eqref{eq} is not $H$-valued, but can be realized only after embedding $H$ into a larger separable Banach space $B$.
Our basic structural assumption is that $F$ is a sufficiently regular perturbation of the quadratic Gaussian energy. More precisely, we assume that there exists a function $U\in C^2(B)$ such that
\[
U|_{H}
=
F-\frac12|\cdot|_H^2 .
\]
As a consequence of this assumption the invariant measure, formally proportional to $e^{-\beta F(x)} dx$, can be represented as the Gibbs perturbation
\[
d\mu_\beta
=
Z_\beta^{-1}e^{-\beta U} d\gamma_\beta ,
\]
where $\gamma_\beta$ is obtained by rescaling the centered Gaussian measure on $B$ with Cameron-Martin space $H$. 
Despite this enlargement of the state space, and despite the fact that $\mu_\beta (H) = \gamma_\beta(H)=0$ in infinite dimensions, the metastable behavior is still governed by the geometry of the energy landscape on the Cameron--Martin space $H$; in this sense, the relevant low-energy picture is the one that would arise if everything happened on $H$. To make this principle precise, we prove two main results. 

\

{\bf Main results.} The first one, Theorem~\ref{Thm:Main_Abstract_Theorem}, is a spectral separation theorem. 
Suppose that $F$ has $k_0$ local minima and satisfies suitable coercivity and nondegeneracy assumptions, see Assumption~\ref{Ass_for_theorem_I} below. 
 Then the $k_0$-th eigenvalue is bounded away from zero uniformly in the small-noise limit: 
\[ \varliminf_{\beta\to\infty}\lambda_\beta^{(k_0)}>0 . \]
 Equivalently, at most $k_0$ eigenvalues vanish as $\beta\to\infty$. Since each local minimum gives rise to a quasimode with exponentially small energy, this identifies the number of exponentially small eigenvalues with the number of local minima. 
 The proof  is based on the variational characterization of the eigenvalues through the infinite-dimensional Dirichlet form~\cite{AlbeverioRockner1990, AlbeverioRockner1991, MaRockner1992}
 \[          \mathcal E_\beta[f]    =  \frac 1\beta   \int_B    \|  \sqrt A \nabla f\|_H^2     \,  d\mu_\beta                   ,        \]
 and on a localization procedure: away from the critical set, after a ground-state transformation, a large-deviation estimate yields a uniform lower bound, with the logarithmic Sobolev inequality for the Gaussian reference measure $\gamma_\beta$ as a key ingredient; near minima, the analysis reduces to convex models, which yield one low-energy direction, while near saddles and higher-index critical points one instead exploits an unstable direction to rule out additional low-energy modes.

 Our second main result, Theorem~\ref{Th:precise_Asy}, gives the sharp Eyring--Kramers asymptotics in the double-well case, i.e. 
 assuming that $F$ has exactly two local minima $x_-$ and $x_+$. For example, if $F(x_-)<F(x_+)$ and there is a unique nondegenerate saddle point $z$ of index one, 
 Theorem~\ref{Th:precise_Asy}  states that~\eqref{FDEK} still holds, with 
 $-\alpha$ the unique negative eigenvalue of the selfadjoint operator
associated with  $\nabla^2 F(z)$  in the metric induced by the modified scalar product $\langle A^{-1} \cdot, \cdot\rangle_H$. We emphasize that the determinants 
appearing in the prefactor of $\eqref{FDEK}$ are well-defined as Fredholm determinants of trace class perturbations of the identity. Indeed, under our assumptions,
\[     \nabla^2 F(x)     =   \text{Id}     +   \nabla^2 U(x) ,         \    \   \     \text{ for every } x\in H,  \]
and $\nabla^2 U(x)$ is automatically trace class by a general result of Goodman.
Theorem~\ref{Th:precise_Asy} also describes the modifications  that occur to the prefactor in the case $F(x_-)= F(x_+)$ and in the case of several relevant saddle points. 
The proof is based on the choice of a suitable approximate eigenfunction, a precise estimate of its Dirichlet energy and an application of the spectral separation given by Theorem~\ref{Thm:Main_Abstract_Theorem} together with a Kato--Temple type inequality.

\

{\bf Applications.} Our framework is designed to provide a unified treatment of a broad class of reversible infinite-dimensional stochastic gradient systems. The original motivation, however, comes from two concrete stochastic partial differential equations: the stochastic Allen--Cahn and stochastic Cahn--Hilliard equations on bounded intervals $(0,\ell)$.
They are given, respectively, by 
\begin{equation}    \label{SPDE:AC} dX_t = \bigl(\Delta X_t - X_t^3 + X_t\bigr)\,dt + \sqrt{2\beta^{-1}}\,dW^{L^2}_t 
\end{equation}
 and 
\begin{equation}    \label{SPDE:CH} 
dX_t = -\Delta\bigl(\Delta X_t - X_t^3 + X_t\bigr)\,dt + \sqrt{2\beta^{-1} (-\Delta) }\,dW^{L^2}_t   . 
\end{equation}
Here $W^{L^2}_t$ denotes a cylindrical Brownian motion on $L^2((0,\ell))$, whose formal time derivative is space-time white noise; in the Cahn--Hilliard case the zero-average constraint is imposed. 
These are prototypical phase-field models for relaxation in a double-well energy landscape. In the terminology of dynamic critical phenomena, they correspond respectively to Model A and Model B dynamics:
Allen--Cahn describes a non-conserved order parameter, whereas Cahn--Hilliard describes a conserved order-parameter dynamics and is classically associated with phase separation.
We refer, for instance, to \cite{HohenbergHalperin1977,DaPratoDebussche1996} for background on these models in the deterministic and stochastic settings. 
Related small-noise large-deviation problems for stochastic reaction-diffusion equations go back, in particular, to \cite{FarisJonaLasinio1982}; see also \cite{CerraiRockner2004}.
From the classical gradient-flow perspective, these equations correspond to evolving the same energy \[ F(x) = \int_0^\ell \left( \frac12 |x'(s)|^2 + \frac14\bigl(x(s)^2-1\bigr)^2 \right)\,ds \] with respect to different geometries: the Allen--Cahn equation is the $L^2$-gradient dynamics, whereas the Cahn--Hilliard equation is the $H^{-1}$-gradient dynamics on the zero-average subspace. A key point of our approach is that this geometric distinction is expressed relative to the Cameron--Martin space of the Gaussian reference measure, rather than relative to the usual $L^2$-based description of the equations. Indeed, for Allen--Cahn we take $H=H^1((0,\ell))$, and the classical $L^2$-geometry is recovered through the modified metric generated by $A^{-1}=(-\Delta+1)^{-1}$ on $H$.
With this choice, the term $\sqrt A\,dW_t$ in~\eqref{eq}, where $W_t$ is cylindrical on $H$, has the same law as $dW^{L^2}_t$.
Analogously, for Cahn--Hilliard we take  \[ H=\dot H^1((0,\ell)) := \left\{ x\in H^1((0,\ell)):\int_0^\ell x(s)\,ds=0 \right\}. \] 
With this choice, the mobility is $A=(-\Delta)^2$, so that the metric induced by $A^{-1}=(-\Delta)^{-2}$ is two Sobolev orders weaker than the Cameron--Martin metric. Thus, the usual $H^{-1}$-gradient structure over $L^2$ is represented in our framework as an $H^{-2}$-type geometry relative to $H=\dot H^1((0,\ell))$. 
This shift of viewpoint reflects the guiding principle that the relevant low-energy geometry is encoded on $H$, and it places both equations in the common abstract form~\eqref{eq}, where $W_t$ is cylindrical on $H$. Similar changes of Hilbertian structure appear in infinite-dimensional convergence-to-equilibrium problems; see, for instance, \cite{GrothausStilgenbauer2014}.

We apply our abstract results to both equations~\eqref{SPDE:AC},~\eqref{SPDE:CH}, see respectively Corollary~\ref{Cor:AC_result}   and Corollary~\ref{Cor:CH_result}, each providing a spectral separation result 
and Eyring-Kramers formula for the spectral gap.  Corollary~\ref{Cor:AC_result} (Allen--Cahn) generalizes a result obtained in our earlier work~\cite{BrooksDiGesu2021}, which assumed 
sufficiently small $\ell$, leading to a particularly simple saddle structure.

We remark that for the stochastic Allen--Cahn equation, Eyring--Kramers laws for metastable transition times have been studied before, for instance through finite-dimensional approximation and potential-theoretic methods; see e.g.~\cite{Barret2015,BerglundGentz2013,BerglundDiGesuWeber2017}. The more challenging Cahn--Hilliard case, by contrast, appears to be substantially less developed. Related metastability results for mass-conserving spatially extended diffusions were obtained in \cite{BerglundDutercq2016}, but that work concerns a discretized mass-conserving Allen--Cahn-type dynamics rather than the continuum stochastic Cahn--Hilliard equation considered here. 
To the best of our knowledge, Corollary~\ref{Cor:CH_result} gives the first rigorous Eyring--Kramers formula for the  stochastic Cahn--Hilliard equation~\eqref{SPDE:CH}.

We remark that the length $\ell$ of the interval is kept fixed. The joint regime in which both $\beta\to \infty$ and $\ell \to \infty$ leads to a sharp-interface problem, which is beyond the scope of the present work; 
see \cite{BertiniButtaDiGesu2025} for related asymptotics of the $\phi^4_1$ measure in this regime. 
Although we focus on Allen--Cahn and Cahn--Hilliard, the abstract framework may also apply to other one-dimensional stochastic gradient systems, such as the sine-Gordon model~\cite{Laarne2025}, or to models with non-local interaction terms.

\

{\bf Outline of the paper.} In Section~\ref{sec2} we introduce the precise setting, define the Gibbs measures and Dirichlet forms, and state the main results.
In Section~\ref{sec3} we prove Theorem~\ref{Thm:Main_Abstract_Theorem} on the spectral separation. 
Section~\ref{sec4} is devoted to the proof of Theorem~\ref{Th:precise_Asy} giving the Eyring--Kramers formula in the double-well case. 
Finally, in Section~\ref{sec5} we verify the assumptions in the main examples, namely the stochastic Allen--Cahn and stochastic Cahn--Hilliard equations, proving Corollary~\ref{Cor:AC_result} and Corollary~\ref{Cor:CH_result}.

\section{Setting and main results}   \label{sec2}

\noindent
We fix a separable Hilbert space $H$ and a functional $F:H \to\mathbb R$. It is standard ~\cite{Gross1967,Stroock2023} that $H$ admits abstract Wiener realizations: one may embed $H$ continuously and densely into a separable Banach space $B$ carrying a centered nondegenerate Gaussian measure $\gamma$ whose Cameron--Martin space is $H$.

Our standing assumption on $F$ is that it admits an abstract Wiener realization for which it can be written as a regular perturbation of the quadratic Cameron--Martin energy. More precisely, we assume that there exist an abstract Wiener space $(B,H,\gamma)$ and a function $U\in C^2(B)$ such that
\begin{equation}
\label{Eq:Def_F}
F = U|_H + \frac12 \|\cdot\|_H^2 .
\end{equation}
We fix one such realization $(B,H,\gamma)$ once and for all; the corresponding function $U$ is then uniquely determined by \eqref{Eq:Def_F}. Here and throughout, differentiability on Banach spaces is understood in the Fréchet sense.

We denote by $\|\cdot\|_B$ the norm on $B$, while $\|\cdot\|_H$ and $\langle\cdot,\cdot\rangle_H$ denote respectively the norm and inner product on $H$. We write
\[
\langle \ell,x\rangle := \ell(x),
\qquad \ell\in B^*,\ x\in B,
\]
for the dual pairing between $B^*$ and $B$.
For $f\in C^1(B)$ and $x\in B$, we write $Df(x)\in B^*$ for its Fréchet differential at $x$. The Cameron--Martin gradient $\nabla f(x)$ is the unique element of $H$ satisfying
\[
\langle \nabla f(x),h\rangle_H = Df(x)[h],
\qquad  h\in H . 
\]
The map  $x\mapsto \nabla f(x)$ is continuous whenever $f\in C^1(B)$.
If $f\in C^2(B)$ and $x\in B$, let $D^2f(x)$ be the second Fréchet differential of $f$ at $x$, viewed as a bounded symmetric bilinear form on $B\times B$. The Cameron--Martin Hessian $\nabla^2 f(x)$ is then the bounded selfadjoint operator on $H$ associated with the restricted bilinear form
\[
(h,k)\longmapsto D^2f(x)[h,k],
\qquad h,k\in H,
\]
that is,
\[
\langle \nabla^2 f(x)h,k\rangle_H
=
D^2f(x)[h,k],
\qquad h,k\in H .
\]
By Goodman's theorem, see Theorem~4.6 in~\cite{Kuo}, this operator is trace class on $H$ and 
the map     $x\mapsto \operatorname{tr}\nabla^2 f(x)$ 
is continuous. Hence the Cameron--Martin Laplacian
\[
\Delta f(x):=\operatorname{tr}\nabla^2 f(x)
\]
is well defined for every $x\in B$, and $x\mapsto \Delta f(x)$ is continuous whenever $f\in C^2(B)$.

\

\noindent
It follows from the assumption~\eqref{Eq:Def_F} that $F\in C^2(H)$ and that the gradient and Hessian of $F$ 
are given by 
\[
\nabla F(x)=x+\nabla U(x),
\qquad
\nabla^2F(x)=\mathrm{Id}+\nabla^2U(x),         \quad     x\in H , 
\]
where $\nabla U$ and $\nabla^2 U$ are understood in the Cameron--Martin sense introduced above. 
We shall denote by    
\[\mathcal{S}:=\left\{x\in H:\nabla F(x)=0\right\}   \]
the set of critical points of $F$. 
In finite dimensions, one can take $B=H$ and a natural object associated with  $F$ is the
Witten, or effective, potential
\begin{equation}
\label{Eq:Def_Witten}
\frac{\beta}{4}\|\nabla F(x)\|_H^2-\frac12\Delta F(x), \qquad x\in H, 
\end{equation}
depending on the parameter $\beta>0$.
In the present infinite-dimensional setting this expression is only formal.
Indeed, although $\nabla^2U(x)$ is trace class by Goodman's theorem, the identity
term in $ \nabla^2F=\mathrm{Id}+\nabla^2U$ has infinite trace. Thus $\Delta F$ is not defined as a finite quantity.
A natural renormalized expression, see Proposition~\ref{prop:GST} below, is obtained by removing the divergent
quadratic contribution coming from the quadratic part $F-U|_{H}=\frac12\|\cdot\|_H^2$. Formally, this
amounts to subtracting
$
\frac{\beta}{4}\|\nabla(F-U)\|_H^2
-\frac12\Delta(F-U)
$
from the expression~\eqref{Eq:Def_Witten}. Accordingly, we introduce for each $\beta>0$ the effective potential 
$V_\beta \in C(B)$ defined by 
\begin{equation}
\label{Eq:Def_V_beta}
    V_\beta(x) := \frac{\beta}{4} \|\nabla U(x)\|_H^2+ \frac{\beta}{2} \langle DU(x),x\rangle -   \frac 12\, \Delta U(x)     ,
    \qquad x\in B .
\end{equation}
We now collect the assumptions on the realized potential $F$ needed for the
spectral separation result. 
\begin{assumption}
\label{Ass_for_theorem_I}
The function $U$ is bounded from below and there exist constants $C,\tau>0$ such that
\begin{align}
\label{eq:expo_control}
    \left\|D U(x)\right\|_{B^*} \leq C\mathrm{exp}(\tau \|x\|_B^2) ,    \qquad    x\in B. 
\end{align}
Furthermore, we assume that for any $x\in \mathcal{S}$ with $\nabla^2 F(x)\geq 0$, there exists a constant $c>0$ such that $\nabla^2 F(x)\geq c$. Finally, the effective potential $V_\beta$ satisfies the uniform bound
\begin{align}
\label{Eq:Th_Assumption}
   \varliminf_{\beta}\inf_x \beta^{-1}V_\beta (x)>-\infty .
\end{align}
\end{assumption}
\noindent
For $\beta>0$, let  
$\gamma_\beta$ denote the push-forward of $\gamma$ under the scaling map $x\mapsto \beta^{-1/2} x$.
Under Assumption~\ref{Ass_for_theorem_I}, the function $U$ is bounded from below, hence  $Z_\beta =    \int_B e^{- \beta U }  d \gamma_\beta < \infty$.    We then define the probability measure 
\begin{align}
\label{Eq:def_mu}
\mathrm{d}\mu_{\beta}:=Z_\beta^{-1}e^{-\beta U}\mathrm{d}\gamma_\beta .
\end{align}
We shall use in the sequel the space $\operatorname{Lip}_b(B)$ of bounded, globally Lipschitz continuous functions on $B$. 
By the infinite-dimensional Rademacher theorem of Enchev and Stroock~\cite{EnchevStroock1993}, every $f\in \operatorname{Lip}_b(B)$ admits a Cameron--Martin gradient $\nabla f(x)$ for $\gamma_\beta$-almost every $x\in B$, for each fixed $\beta>0$. Since $\mu_\beta$ is equivalent to $\gamma_\beta$, the same holds $\mu_\beta$-almost everywhere. 
Moreover, denoting by $C_{\operatorname{Lip(H)}}(f)$ the Lipschitz constant of $f$
 along $H$-directions, it holds $\|\nabla f(x) \|_H     \le     C_{\operatorname{Lip(H)}}(f)$ at $x\in B$ where $\nabla f(x)$ exists. 

\

\noindent
In addition to the realized potential $F$, which determines the Gibbs measures
$\mu_\beta$ as above, we fix throughout a possibly unbounded selfadjoint    
operator $A$ on $H$, defined on a dense linear space $\operatorname{dom}(A) \subset H$. We assume that $A$ is uniformly positive and denote by
$a>0$ its lower bound. We also assume the following compatibility condition between $A$ and the abstract
Wiener realization. 
For $\ell\in B^*$, we denote by $\ell_H\in H$ its Cameron--Martin
representative, characterized by
\[ \ell(ih)=\langle \ell_H,h\rangle_H,
\qquad h\in H ,
\]
where  $i: H \to B$ is the Cameron--Martin embedding and set $B_H^* :=   \{     \ell_H:   \ell\in B^* \} \subset H$.
We shall assume throughout that the set of $A$-admissible Cameron-Martin directions
\[
\mathcal K_A
:=
\left\{
h \in    B_H^* \cap \operatorname{dom}(A): Ah \in  B_H^*
\right\}      \text{ is dense in } H. 
\]
We denote by $\mathcal F_A C_b^\infty(B)$ the set of cylindrical functions adapted to $A$, namely of functions $f: B \to\mathbb R$ such that
\[f(x)=\varphi\bigl(\ell_1(x),\ldots,\ell_n(x)\bigr)     \]
for some  $n\in\mathbb N$, $\varphi\in C_b^\infty(\mathbb R^n)$ and $\ell_1,\ldots,\ell_n\in B^*$
satisfying $(\ell_j)_H  \in \mathcal K_A $.
Because $\mathcal K_A$ is dense in $H$, the linear functionals
$\ell\in B^*$ with $\ell_H\in\mathcal K_A$ generate the full Gaussian first
chaos. By the Wiener chaos decomposition, the corresponding smooth cylindrical
functions are therefore dense in $L^2(\gamma_\beta)$.
Since $\mu_\beta$ is
equivalent to $\gamma_\beta$ and has bounded density with respect to
$\gamma_\beta$, a standard truncation argument shows that
$\mathcal F_A C_b^\infty(B)$ is also dense in $L^2(\mu_\beta)$, for every
$\beta>0$.
For a function $f\in \mathcal F_A C_b^\infty(B)$ the Cameron--Martin gradient is given by
\[
   \nabla f(x)
   =
   \sum_{j=1}^n
   \partial_j\varphi\bigl(\ell_1(x),\ldots,\ell_n(x)\bigr)\,(\ell_j)_H ,   \qquad x\in B. \]
 Since $(\ell_j)_H\in K_A$, the vector $\nabla f(x)$ belongs to
$\operatorname{dom}(A)$. The coefficients in the
above formula being bounded, it follows that $ x\mapsto \|\sqrt A \,\nabla f(x)\|_H$ is bounded on $B$.

\

\noindent
In view of the preceding discussion we may define on $\mathcal F_A C_b^\infty(B)$
the bilinear form
\[
\mathcal E_\beta[f,g]
:=
\frac1\beta
\int_B
\langle \sqrt A \, \nabla f , \sqrt A \nabla g\rangle_H \, d\mu_\beta ,
\]
and set for short  $\mathcal E_\beta[f]:=\mathcal E_\beta[f,f]$. For
$f\in \mathcal F_A C_b^\infty(B)$ and $x\in B$ we also define 
\begin{equation} \label{Eq:Def_L_Operator}
\mathcal L_\beta f(x)
:=
-\frac1\beta \operatorname{tr}    \left(A\nabla^2 f(x)\right)
+\langle A\nabla f(x),x\rangle
+\langle A\nabla f(x),\nabla U(x)\rangle_H .
\end{equation}
Each term on the right-hand side is well defined, and
$\mathcal L_\beta f\in L^2(\mu_\beta)$ for all sufficiently large $\beta$. Indeed, $A\nabla f(x)\in B_H^*$. Hence, through the identification of $B_H^*$
with $B^*$, the dual pairing $\langle A\nabla f(x),x\rangle$ is well defined for
every $x\in B$. Since this term is a finite linear combination of $B^*$-linear
functionals with bounded coefficients, it belongs to $L^2(\mu_\beta)$. 
Moreover, the growth assumption~\eqref{eq:expo_control} on $DU$, together with the lower boundedness of
$U$ and Fernique's theorem applied to the rescaled Gaussian measure
$\gamma_\beta$, implies that $\langle A\nabla f,\nabla U\rangle_H\in
L^2(B,\mu_\beta)$ for all sufficiently large $\beta$.
Finally, since $\nabla^2f(x)$ has finite rank and range contained
in $\operatorname{dom}(A)$, the operator $A\nabla^2f(x)$ is finite rank and the
trace term is bounded.

 An integration-by-parts formula gives, for all
$f,g\in\mathcal F_A C_b^\infty(B)$,
\[
   \mathcal E_\beta[f,g]
   =
   \int_B f\,\mathcal L_\beta g\,d\mu_\beta .
\]
In particular, $\mathcal L_\beta$ is symmetric and non-negative on
$\mathcal F_A C_b^\infty(B)$. Since $\mathcal F_A C_b^\infty(B)$ is dense in
$L^2(\mu_\beta)$, the nonnegative symmetric operator $\mathcal L_\beta$
admits a Friedrichs extension. Equivalently, the quadratic form
$\mathcal E_\beta$ is closable. We keep the notation $\mathcal E_\beta$ for its
closure and still denote by $\mathcal L_\beta$ the associated nonnegative
selfadjoint operator.

\

\noindent
The min--max values associated with the closed form $\mathcal E_\beta$ are defined by
\begin{equation}
\label{Eq:min_max_definition_eigenvalue}
   \lambda_\beta^{(j)}
   :=
   \inf_{\substack{V\subset \operatorname{dom}(\mathcal E_\beta)\\ \dim V=j+1}}
   \sup_{f\in V\setminus\{0\}}
   \frac{\mathcal E_\beta[f]}{\|f\|_{L^2(\mu_\beta)}^2},
   \qquad j\ge 0 .
\end{equation}
Equivalently, since $\mathcal F_A C_b^\infty(B)$ is a form core for
$\mathcal E_\beta$, the same values are obtained if the infimum is restricted to
subspaces $V\subset\mathcal F_A C_b^\infty(B)$. Since $\mathcal E_\beta[f]\ge0$
and $\mathcal E_\beta[1]=0$, we have $\lambda_\beta^{(0)}=0$.
\begin{theorem}
\label{Thm:Main_Abstract_Theorem}
Under Assumption \ref{Ass_for_theorem_I}, the number $k_0$ of local minima of $F$ is finite and non-zero. Moreover  
\[   \varliminf_{\beta\rightarrow \infty} \lambda_{\beta}^{(k_0)}  >0 , \]
and, for $j    =   1, \dots, k_0-1$, 
\begin{align*}
     \varlimsup_{\beta\rightarrow \infty}\frac{1}{\beta}\log \lambda_{\beta}^{(j)}<0.
\end{align*}
\end{theorem}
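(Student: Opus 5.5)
Our plan has three parts: finiteness of the critical set, exponentially small upper bounds for $\lambda^{(j)}_\beta$ with $j<k_0$, and an order-one lower bound for $\lambda^{(k_0)}_\beta$ via an IMS-type localization.

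\textbf{Finiteness of local minima.} From \eqref{Eq:Th_Assumption} and the definition \eqref{Eq:Def_V_beta} of $V_\beta$, we divide by $\beta$ and let $\beta\to\infty$. This gives $\frac14\|\nabla U(x)\|_H^2+\frac12\langle DU(x),x\rangle\ge -C_0$ for all $x$. On $H$ we have $\langle DU(x),x\rangle=\langle\nabla U(x),x\rangle_H$, so the left side equals $\frac14\|\nabla F(x)\|_H^2-\frac14\|x\|_H^2$. Hence critical points satisfy $\|x\|_H^2\le 4C_0$.

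Next, $\nabla F=\mathrm{Id}+\nabla U$, and $\nabla U$ maps $H$-bounded sets compactly: $H$-balls are precompact in $B$, $\nabla U$ is continuous from $B$ to $H$, and $i:H\to B$ is compact. So $\mathcal S$ is compact in $H$. Take a minimum $x\in\mathcal S$, so $\nabla^2F(x)\ge0$. By Assumption~\ref{Ass_for_theorem_I} we then have $\nabla^2F(x)\ge c>0$, so minima are isolated. A compact set of isolated points is finite, so $k_0<\infty$. Moreover $F$ attains its infimum: $F$ is coercive on $H$ because $U$ is bounded below, and it is weakly lower semicontinuous because $U\circ i$ is weakly continuous. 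Hence $k_0\ge1$.

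\textbf{Upper bound for $j<k_0$.} Let $m_1,\dots,m_{k_0}$ be the minima. For each $k$ we choose $\chi_k\in\mathcal F_AC_b^\infty(B)$ equal to $1$ on a $B$-neighbourhood of $m_k$ within a sublevel region, with the $\chi_k$ having disjoint supports. We take the gradient of $\chi_k$ supported where $U$ plus the Gaussian cost exceeds $F(m_k)+\delta$. A large-deviation (Laplace) estimate for $\mu_\beta$, with rate function $F$ on $H$ extended by $+\infty$ (Schilder/Cameron--Martin), then gives
\[
\mathcal E_\beta[\chi_k]\le e^{-\beta\delta}\|\chi_k\|^2_{L^2(\mu_\beta)} .
\]
The span of the $\chi_k$ with $k\le j$ is $(j+1)$-dimensional with orthogonal elements. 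Inserting it into the min--max \eqref{Eq:min_max_definition_eigenvalue} yields $\limsup_\beta\beta^{-1}\log\lambda^{(j)}_\beta\le-\delta<0$.

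\textbf{Lower bound for $\lambda^{(k_0)}_\beta$.} We use a partition of unity $\sum_i\theta_i^2=1$ in cylindrical functions. It consists of balls $\theta_s$ of $\beta$-independent radius around each $s\in\mathcal S$, together with an exterior piece $\theta_0$. The IMS formula gives
\[
\mathcal E_\beta[f]=\sum_i\mathcal E_\beta[\theta_if]-\beta^{-1}\int\sum_i\|\sqrt A\nabla\theta_i\|^2f^2\,d\mu_\beta ,
\]
and the error term is $O(\beta^{-1})$.

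On the exterior region we apply the ground-state transform $f=e^{\beta U/2}g$ relative to $\gamma_\beta$, as in the forthcoming Proposition~\ref{prop:GST}. This turns $\mathcal E_\beta$ into the Gaussian Dirichlet form plus the potential $V_\beta$, using $A\ge a$. The Gaussian log-Sobolev inequality (entropy bound) then converts $V_\beta\ge\beta c_1$ on most of the region into a uniform lower bound. The key point here is that $\beta^{-1}V_\beta\to\frac14\|\nabla F\|^2-\frac14\|x\|^2$ stays away from its infimum away from $\mathcal S$, in the large-deviation sense. Near a minimum we compare with the Gaussian model given by the quadratic approximation $\nabla^2F\ge c$. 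A Brascamp--Lieb or Bakry--Émery argument for the convex localized measure gives at most one low mode per minimum, with the rest of the form bounded below by about $ac$. Near a critical point with a negative direction $e$, we rather use a one-dimensional Hardy/Poincaré bound along $e$: the localized measure is exponentially small at the centre, so no low mode survives.

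Combining these estimates, any $f$ orthogonal to the $k_0$ localized minima profiles has $\mathcal E_\beta[f]\ge c'\|f\|^2$. By min--max this gives $\liminf_\beta\lambda^{(k_0)}_\beta>0$. We expect the main obstacle to be this exterior estimate. Only \eqref{Eq:Th_Assumption} is available there, and making the log-Sobolev and large-deviation argument uniform on unbounded regions of $B$ where $\mu_\beta$ is not localized is the delicate step. We would first try a Donsker--Varadhan-type variational bound using the entropy inequality with the tilted measure.
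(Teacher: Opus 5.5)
Your architecture is the paper's: ground-state transform with Gaussian log-Sobolev and large deviations away from $\mathcal S$, a convex Poincar\'e inequality near minima, a negative-curvature bound near the other critical points, IMS gluing, and localized quasimodes for the upper bound. Your finiteness argument and your direct-method proof of $k_0\ge1$ are fine.

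\textbf{The exterior step.} You leave open precisely the step that carries Theorem~\ref{Eq:Main_away_from_crit}, and it needs no pointwise bound ``$V_\beta\ge\beta c_1$ on most of the region''. Take $g=Z_\beta^{-1/2}\theta_0fe^{-\beta U/2}$, supported in the closed set $C=\{\operatorname{dist}_B(\cdot,\mathcal S)\ge r\}$. The NGS bound (Proposition~\ref{Prop:2}) gives energy $\ge\bigl(-\frac12\log\int_C e^{-2V_\beta}\,d\gamma_\beta\bigr)\|g\|^2_{L^2(\gamma_\beta)}$. Varadhan's upper bound (Proposition~\ref{Prop:4}), with $h_\beta=2\beta^{-1}V_\beta$ and rate $J=\frac12\|\cdot\|_H^2$, then gives $\int_C e^{-2V_\beta}d\gamma_\beta\le e^{-\beta\inf_C(h+J)+o(\beta)}$, where $h+J=\frac12\|\nabla F\|_H^2$ on $H$.

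The uniformity on unbounded regions that worries you is exactly hypothesis (iii) of that lemma, i.e.\ \eqref{Eq:Th_Assumption}, together with the compact sublevel sets of $J$. Your own inequality $\|\nabla F\|_H^2\ge\|x\|_H^2-4C_0$ gives $h+J\ge1$ off a large $H$-ball. So the infimum is effectively over a $B$-compact set, on which the lower semicontinuous function $h+J$ has no zero. This yields a lower bound of order $\beta$, so the $O(\beta^{-1})$ IMS error is harmless.

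\textbf{The localization.} It cannot be done with cylindrical functions. A function $\varphi(\ell_1,\dots,\ell_n)$ is invariant under translations by $\bigcap_i\ker\ell_i$, so it is never supported in a $B$-ball. But $\nabla^2F\ge c$ near minima and $\langle\zeta,\nabla^2F(y)\zeta\rangle\le -c$ near other critical points hold only on small balls. You need cylindrical cutoffs only because you run IMS for the $A$-weighted form. Instead, first use $\mathcal E_\beta[f]\ge\frac a\beta\int\|\nabla f\|_H^2\,d\mu_\beta$; a lower bound for this smaller form suffices by min--max. Then localize with Lipschitz functions of $\|x-x_j\|_B$, as the paper does.

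Also, only minima are assumed nondegenerate, so $\mathcal S$ may be infinite, e.g.\ the translation orbit of saddles in the periodic case of the remark after Corollary~\ref{Cor:AC_result}. So ``balls around each $s\in\mathcal S$'' must be replaced by a finite subcover of the compact set $\mathcal S$.

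\textbf{The upper bound.} Here your choice of $\mathcal F_AC_b^\infty(B)$ quasimodes is the more careful one. The paper's $\chi(\|\cdot-x\|_B)$ comes only with the $A$-free bound \eqref{Eq:generic_exponential_control}, which controls $\mathcal E_\beta$ only when $A$ is bounded. But you owe the construction. The transition layer $\{\pi x\in W'\setminus W\}$ is unbounded in $B$, and could a priori contain low-energy points, e.g.\ another minimum differing from $m_k$ only in $\ker\pi$. A reduction as in Proposition~\ref{Prop:Reduction_to_finite}, with $\mathcal V\subset\mathcal K_A$, settles this. On sublevel sets one has $F(x)\ge F(\pi x)-\rho$. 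Hence a layer point with $F\le F(m_k)+\delta$ would project to a point of $H$ at $H$-distance about $r$ from $m_k$ with $F\le F(m_k)+\delta+\rho$. This contradicts the quadratic growth $F\ge F(m_k)+c\,r^2$ once $\delta+\rho<c\,r^2$.
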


\noindent
In order to obtain a sharp asymptotic on the low--lying eigenvalues, we make the additional Assumption \ref{Ass_for_theorem_II}.
\begin{assumption}
\label{Ass_for_theorem_II}
The function $F$ has exactly two local minima $x_\pm$ and $\nabla^2 F$ is invertible for all $x\in \mathcal{S}$. 
We further assume that, for every critical point $x\in \mathcal S$, there exist constants
$C_1,C_2>0$ such that
\begin{align}
\label{eq:assump_comp}
    \langle y,\nabla^2 F(x)y\rangle_H\geq C_1 \|y\|_H^2-C_2\langle y,A^{-1}y\rangle_H.
\end{align}
\end{assumption}

\noindent
In the following we denote by $\overline{H}$ the completion of $H$ with respect to the inner product $\langle x,y\rangle_A:=\langle A^{-1}x,y\rangle$ and we write $\mathcal{T}$ for the set of critical points $x\in \mathcal{S}$ such that $\nabla^2 F(x)$ has exactly a single negative eigenvalue. We obtain by Assumption \eqref{eq:assump_comp} that the unbounded quadratic form 
   \begin{align*}
       q(y):=\langle y,\nabla^2 F(x)y\rangle_H
   \end{align*}
 defined on the subspace $H\subseteq  \overline{H}$ (equipped with the norm $\|\cdot \|_A$) is complete, with a corresponding self-adjoint operator $Q$ defined on $\overline{H}$ satisfying
 \begin{align}
 \label{Eq:Q_Operator}
    A^{-1}Q=\nabla^2 F(x)|_{\operatorname{dom}(Q)}.
 \end{align}
While the spectrum of $Q$ and $\nabla^2 F(x)$ is different in general, we note that the kernel of $Q$ is trivial and the number of negative eigenvalues of $Q$ agrees with the number of negative eigenvalues of $\nabla^2 F(x)$. In particular, we have for $x\in \mathcal{T}$ that there is, up to a sign, a unique eigenvector $\nu_x\in H$ of $Q$ with a negative eigenvalue $-\alpha_x<0$ and $\|\nu_x\|_A=1$. By Assumption \ref{Ass_for_theorem_II} it is clear that $Q$ is invertible, i.e. $0\notin \sigma(Q)$, and hence we find a $\delta>0$ such that
\begin{align*}
   \langle w, Qw\rangle_A \geq \delta \|w\|_A^2
\end{align*}
for all $w\in \overline{H}$ with $\langle \nu_x,w\rangle_A=0$. Defining the vector 
\begin{align}
    \label{Eq:def_of_eta_x}
    \eta_x:=A^{-1}\nu_x,
\end{align}
we therefore obtain, together with Assumption \eqref{eq:assump_comp}, for all $w\in H$ with $\langle \eta_x,w\rangle=0$
\begin{align*}
    \left\langle w, \nabla^2 F(x)w \right\rangle\geq \delta' \|w\|^2,
\end{align*}
for a suitable $\delta'>0$. In particular, the vector $\eta_x$ separates the level set of the quadratic form $[q\leq 0]$ into two connected components, or equivalently the orthogonal space $\langle \eta_x\rangle^{\perp}$ has a trivial intersection with $[q\leq 0]$. \\

\noindent
The preceding local picture motivates the following standard Morse-theoretic description of the critical sublevel set at the communication height. Let \(h\) be the infimum of the maximal value attained along paths connecting \(x_\pm\), and let \(\mathcal T_h\) denote the set of critical points \(x\in [F\le h]\) whose Hessian has exactly one negative eigenvalue. We use the corresponding decomposition of the critical sublevel set: the set \([F\le h]\setminus \mathcal T_h\) has two path-connected components, denoted by \(A_-\) and \(A_+\), containing respectively \(x_-\) and \(x_+\). Equivalently, the two components meet only through the saddle region \(\mathcal T_h\). In particular, after removing small balls around the points of \(\mathcal T_h\), the two components are separated by a positive distance in the Banach-space topology:
 \begin{equation}     \label{eq:dist}
 \operatorname{dist}_B \bigl(A_- \setminus B_r(\mathcal T_h), A_+ \setminus B_r(\mathcal T_h)\bigr)>0 . \end{equation} 
 Here \[ B_r(\mathcal T_h):=\bigcup_{y\in\mathcal T_h} B_r(y), \qquad B_r(y):=\{z\in B:\|z-y\|_B<r\}. \] Moreover, if \(x\in \mathcal T_h\), then the sign of \(\langle \eta_x,y-x\rangle\) distinguishes the two local branches of \([F\le h]\) near \(x\). More precisely, for \(R>0\) sufficiently small, all points \(y\in [F\le h]\cap B_R(x)\) with \[ \langle \eta_x,y-x\rangle<0 \] belong to one of the two components, denoted by \(A_{\sigma_1(x)}\), while all points \(y\in [F\le h]\cap B_R(x)\) with \[ \langle \eta_x,y-x\rangle>0 \] belong to the other component, denoted by \(A_{\sigma_2(x)}\).

\

\noindent
In order to state our second main Theorem \ref{Th:precise_Asy}, let us sort
\begin{align}
\label{Eq:enummeration_crit}
    \mathcal{T}_h=\{x_1,\dots ,x_{m_0},x_{m_0+1},\dots  ,x_m\},
\end{align}
such that $\sigma_1(x_j)\neq \sigma_2(x_j)$ for $j\leq m_0$ and $\sigma_1(x_j)=\sigma_2(x_j)$ otherwise, and let $\{-\alpha_1,\dots,-\alpha_m\}$ as well as $\{\eta_1,\dots,\eta_{m}\}$ be the corresponding enumeration of the eigenvalues $-\alpha_x$ and vectors $\eta_j:=A^{-1}\nu_j$ defined in \eqref{Eq:def_of_eta_x}. Then, using the rank one projection $P_{\eta,\eta'}(y):=\langle \eta',y\rangle \eta$, we define the Gaussian measures
\begin{align*}
    \mathrm{d}\gamma_{\pm}: & =C_\pm^{-1} e^{-\frac{1}{2}\langle x,\nabla^2  U(x_\pm)x\rangle}\mathrm{d}\gamma,\\
     \mathrm{d}\gamma_{j}: & =C_j^{-1} e^{- \frac{1}{2}\left\langle x,\left(\nabla^2  U(x_j)+2\alpha_j P_{\eta_j,\eta_j}\right)x\right\rangle }\mathrm{d}\gamma,
\end{align*}
with the normalization constants $C_\pm$ and $C_j$ given as
\begin{align}
\nonumber
    C_\pm: & =\mathrm{det}\Big[\mathbf{1}+\nabla^2  U(x_\pm)\Big]^{-\frac{1}{2}}=\mathrm{det}\Big[\nabla^2  F(x_\pm)\Big]^{-\frac{1}{2}},\\
    \label{Eq:def_C_j_const}
    C_j: & = \mathrm{det}\Big[\mathbf{1}+\nabla^2  U(x_j)+2\alpha_j P_{\eta_j,\eta_j}\Big]^{-\frac{1}{2}}= \left|\mathrm{det}\Big[\nabla^2  F(x_j)\Big]\right|^{-\frac{1}{2}}.
\end{align}
The identity in \eqref{Eq:def_C_j_const} follows easily from the fact that
\begin{align*}
\mathbf{1}+\nabla^2  U(x_j)+2\alpha_j P_{\eta_j,\eta_j}=\nabla^2  F(x_j)R_j ,    
\end{align*}
where $R_j:=I-2P_{\nu_j,\eta_j}$ is an isometry with respect to the
$\|\cdot\|_A$ metric. Furthermore, we define the re-scaled versions
\begin{align*}
    \mathrm{d}\gamma_{\beta,\pm}: & =C_\pm^{-1} e^{-\frac{\beta}{2} \langle x,\nabla^2   U(x_\pm)x\rangle}\mathrm{d}\gamma_{\beta},\\
     \mathrm{d}\gamma_{\beta,j}: & =C_j^{-1} e^{- \frac{\beta}{2} \left\langle x,\left(\nabla^2  U(x_j)+2\alpha_j P_{\eta_j}\right)x\right\rangle }\mathrm{d}\gamma_{\beta}.
\end{align*}

\begin{theorem}
\label{Th:precise_Asy}
Assume that Assumptions~1 and~2 hold, that $\eta_x\in\mathcal K_A$ for every
$x\in \mathcal T_h$, and that $U\in C^3(B)$. Then the following asymptotics hold  as $\beta\to \infty$.

If $F(x_-)=F(x_+)$, then
\begin{align}
\label{Eq:precise_Asy_1}
   \lambda_\beta^{(1)}
   =
      \sum_{j=1}^{m_0}
   \frac{C_-+C_+}{2\pi C_-C_+}\,
   C_j\alpha_j\,
   e^{-\beta\left(h-F(x_+)\right)}  \left(1+O  ( \beta^{-1/2} )\right)  .
\end{align}

If $F(x_+)>F(x_-)$, then
\begin{align}
\label{Eq:precise_Asy_2}
   \lambda_\beta^{(1)}
   =
     \sum_{j=1}^{m_0}
   \frac{C_j\alpha_j}{2\pi C_+}\,
   e^{-\beta\left(h-F(x_+)\right)}    \left(1+O(\beta^{-1/2})\right).      \end{align}
\end{theorem}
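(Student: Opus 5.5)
The plan is to bound $\lambda_\beta^{(1)}$ from above with an explicit quasimode and from below by combining the spectral separation of Theorem~\ref{Thm:Main_Abstract_Theorem} (with $k_0=2$) with a Kato--Temple type argument.

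\textbf{Construction of the quasimode.} We work on the two-dimensional space spanned by $1$ and a function $f_\beta$ that is close to constant values on $A_-$ and $A_+$ and interpolates in the unstable direction near each saddle. Fix a small $r>0$ and set $f_\beta\equiv 1$ on a neighbourhood of $A_+\setminus B_r(\mathcal T_h)$ and $f_\beta\equiv 0$ on a neighbourhood of $A_-\setminus B_r(\mathcal T_h)$; this is possible by \eqref{eq:dist}. Away from $[F\le h+\epsilon]$ we interpolate in an arbitrary Lipschitz way. Near a saddle $x_j$ with $j\le m_0$ we use the one-dimensional erf profile
\[
f_\beta(y)=\Phi_j\bigl(\langle \eta_j,y-x_j\rangle\bigr),\qquad \Phi_j(s)\propto\int_{-\infty}^{s}e^{-\beta\alpha_j t^2/2}\,dt ,
\]
oriented according to $\sigma_1(x_j),\sigma_2(x_j)$ and cut off smoothly at scale $r$. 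The assumption $\eta_j\in\mathcal K_A$ makes $f_\beta$ (after mollification) an element of $\mathcal F_A C_b^\infty(B)$ or of its form domain. Near saddles with $j>m_0$, both sides belong to the same component, so $f_\beta$ is constant there and no interpolation is needed.

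\textbf{Energy and variance computation.} Write $Z_\beta$ via Laplace asymptotics. Near $x_\pm$, expand $U(y)=U(x_\pm)+\langle DU(x_\pm),y-x_\pm\rangle+\tfrac12 D^2U(x_\pm)[y-x_\pm]^2+O(\|y-x_\pm\|_B^3)$ using $U\in C^3(B)$. After the Cameron--Martin shift by $x_\pm\in H$, the quadratic part combined with $\gamma_\beta$ gives the Gaussian $\gamma_{\beta,\pm}$ with mass $C_\pm$. The cubic remainder produces the $O(\beta^{-1/2})$ correction, by Fernique applied to $\gamma_\beta$, whose typical scale is $\beta^{-1/2}$. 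Contributions from outside small balls are exponentially negligible, using the lower bound on $U$ and \eqref{eq:expo_control}. This gives $\mu_\beta$-masses $\propto C_\pm e^{-\beta F(x_\pm)}$ and hence the variance of $f_\beta$. For the Dirichlet energy, $\nabla f_\beta=\Phi_j'\,\eta_j$ near $x_j$, so
\[
\|\sqrt A\nabla f_\beta\|_H^2=\Phi_j'^2\langle \eta_j,A\eta_j\rangle=\Phi_j'^2\langle\nu_j,A^{-1}\nu_j\rangle=\Phi_j'^2 .
\]
Completing the square, $e^{-\beta\alpha_j s^2}$ combines with the Hessian exponent, whose $\eta_j$-direction carries $-\alpha_j s^2/2$, to give the flipped Gaussian $\gamma_{\beta,j}$ of mass $C_j$. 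This is exactly where $R_j$ and \eqref{Eq:def_C_j_const} enter, and it yields $\mathcal E_\beta[f_\beta]\sim\sum_{j\le m_0}\tfrac{\alpha_j C_j}{2\pi}e^{-\beta h}\cdot(\text{norm.})$. The Dirichlet energy contributed by regions above height $h$ is exponentially smaller. Dividing by the variance gives the right-hand sides of \eqref{Eq:precise_Asy_1} and \eqref{Eq:precise_Asy_2}. In the case $F(x_-)=F(x_+)$ the variance is $\tfrac{C_-C_+}{(C_-+C_+)^2}$ in normalized masses, which produces the factor $\tfrac{C_-+C_+}{C_-C_+}$. Min--max then gives the upper bound on $\lambda_\beta^{(1)}$.

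\textbf{Lower bound.} By Theorem~\ref{Thm:Main_Abstract_Theorem}, $\lambda^{(2)}_\beta\ge c>0$ and $\lambda_\beta^{(1)}$ is exponentially small. Let $g=f_\beta-\int f_\beta\,d\mu_\beta$ be normalized. Temple's inequality gives
\[
\lambda^{(1)}_\beta\ge \mathcal E_\beta[g]-\frac{\|\mathcal L_\beta g\|^2-\mathcal E_\beta[g]^2}{c-\mathcal E_\beta[g]} .
\]
This requires $\|\mathcal L_\beta g\|^2_{L^2(\mu_\beta)}$ to be much smaller than $\mathcal E_\beta[g]$, namely a relative error $O(\beta^{-1/2})$ or better. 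This is the main obstacle: the erf profile is only an approximate eigenfunction. To get $\mathcal L_\beta f_\beta$ small near $x_j$ one must check that the one-dimensional ODE $-\beta^{-1}\Phi''+\alpha s\Phi'=0$ cancels the leading terms of \eqref{Eq:Def_L_Operator}. The trace term is $\beta^{-1}\Phi''\langle\eta_j,A\eta_j\rangle$. The drift term is $\Phi'\langle A\eta_j,y+\nabla U(y)\rangle=\Phi'\langle A\eta_j,\nabla F(y)\rangle\approx\Phi'\langle A\eta_j,\nabla^2F(x_j)(y-x_j)\rangle=-\alpha_j\Phi'\langle\eta_j,y-x_j\rangle$, using \eqref{Eq:Q_Operator} and selfadjointness. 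The remainders are $O(\|y-x_j\|^2)\Phi'$, controlled by $U\in C^3$ and a Gaussian moment bound, and the cut-off regions contribute exponentially small errors since they sit above height $h$. If $\|\mathcal L_\beta g\|^2$ turns out to be only comparable to $\beta^{-1}$ times $\lambda$ rather than smaller, I would instead project $g$ onto the spectral subspace of $\lambda^{(1)}_\beta$ and compare through $\|(1-\Pi)g\|^2\le \mathcal E_\beta[g]/c$. This still gives the lower bound with relative error $O(\beta^{-1/2})$.
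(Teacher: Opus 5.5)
Your overall route matches the paper's. You use an erf-type profile in $\langle\eta_j,\cdot-x_j\rangle$ at the saddles $x_j$, $j\le m_0$, Laplace asymptotics in which the flipped Gaussian $\gamma_{\beta,j}$ produces $C_j$, min--max for the upper bound, and Kato--Temple with $\varliminf_\beta\lambda^{(2)}_\beta>0$ from Theorem~\ref{Thm:Main_Abstract_Theorem} for the lower bound.

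The genuine gap is the quasimode itself. Temple's inequality needs $g\in\operatorname{dom}(\mathcal L_\beta)$ and a proof that $\|\mathcal L_\beta g\|$ is small, including where $g$ is glued. You instead build $f_\beta$ from an ``arbitrary Lipschitz'' interpolation and from cut-offs at scale $r$, i.e.\ from functions of $\|\cdot\|_B$ or $\operatorname{dist}_B$. Such functions are not in $\operatorname{dom}(\mathcal L_\beta)$. For unbounded $A$ they are not even in the form domain. In the Allen--Cahn case the Cameron--Martin gradient of $\chi(\|y-x\|_B)$ is a multiple of the Green's function $K^{-1}\delta_{s_*}$, with $s_*$ the point where $|y-x|$ is maximal. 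This function is not in $\operatorname{dom}(\sqrt A)$, so $\mathcal E_\beta[f_\beta]=+\infty$ and even your min--max upper bound is void. ``Mollification'' is not a routine step on $B$. Your claim that the glued regions are negligible because they lie above height $h$ presupposes a pointwise bound like $|\mathcal L_\beta g(x)|\le C\beta^{1/2}e^{\tau\|x\|_B^2}$ to feed into Varadhan's lemma.

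The missing idea is the paper's finite-dimensional reduction, Proposition~\ref{Prop:Reduction_to_finite}. Pick a finite-dimensional $\mathcal V\subseteq B^*\cap A^{-1}(B^*)$ containing all $\eta_j$, with $\|x-\pi x\|_B<\rho$ and $F(x)\ge F(\pi x)-\rho$ on $[F\le h+\kappa]$. Take the quasimode to be $\varphi(\pi x)$ times the piecewise profile, i.e.\ a smooth function $G_\beta(b_1(x),\dots,b_d(x))$ of finitely many $\mathcal K_A$-functionals. Then $\mathcal L_\beta\Psi_\beta$ is explicit from \eqref{Eq:Def_L_Operator} and satisfies the above growth bound by \eqref{eq:expo_control}. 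Moreover, the $B$-closed set on which $\Psi_\beta$ is neither $\pm1$ nor the one-dimensional profile lies in $[F\ge h+\kappa]$. Consistent gluing also uses Proposition~\ref{Cor:epsilon_half_space} and a profile equal to $\pm1$ exactly for $|s|\ge\epsilon$, as the paper's $f_{\beta,j}$ is.

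Your fallback does not remove this need. From $\mathcal E_\beta[g]=\lambda^{(1)}_\beta\|\Pi g\|^2+\mathcal E_\beta[(1-\Pi)g]$ and $\|(1-\Pi)g\|^2\le\mathcal E_\beta[g]/c$ you only get $\lambda^{(1)}_\beta\le\mathcal E_\beta[g]/\|\Pi g\|^2$, which is an upper bound. A lower bound requires $\mathcal E_\beta[(1-\Pi)g]$ to be small relative to $\mathcal E_\beta[g]$. The natural estimate $\mathcal E_\beta[(1-\Pi)g]\le\|(1-\Pi)\mathcal L_\beta g\|^2/c$ again needs $\mathcal L_\beta g$. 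So the bound $\|\mathcal L_\beta\Psi_\beta\|^2_{L^2(\mu_\beta)}\le Ce^{-\beta h}/\beta$ for a cylindrical quasimode is indispensable, not optional.

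A minor point: with $\Phi'\propto e^{-\beta\alpha s^2/2}$ the profile solves $-\beta^{-1}\Phi''-\alpha s\Phi'=0$, not $-\beta^{-1}\Phi''+\alpha s\Phi'=0$. The cancellation you then describe, between the trace term $-\beta^{-1}\Phi''$ and the drift $-\alpha_j s\Phi'$, is the correct one.
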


\section{Proof of the spectral separation}   \label{sec3}

\noindent
It is the content of this section to verify Theorem \ref{Thm:Main_Abstract_Theorem}, which claims that, depending on the number of minima $k_0$ of $F$, the eigenvalue $\lambda^{(k_0)}_\beta$ is bounded away from zero by a constant $\delta>0$ uniformly in $\beta\rightarrow \infty$. From the min-max definition of $\lambda^{(k_0)}_\beta$, it is clear that it is enough to find elements $\Phi_\beta^{(1)},\dots, \Phi_\beta^{(k_0)}$, such that
\begin{align}
\label{Eq:lower_bound_global}
    \mathcal{E}_\beta[\Psi]\geq \delta \|\Psi\|_{L^2(\mu_\beta)}^2,
\end{align}
for all $\Psi$ orthogonal to $\Phi_\beta^{(1)},\dots, \Phi_\beta^{(k_0)}$. In the following, we will first verify the lower bound in \eqref{Eq:lower_bound_global} separately for functions $\Psi$ supported in a neighborhood of a critical point $x\in \mathcal{S}$, where we distinguish between minima and other critical points, and functions having a (certain) positive distance from the critical points $\mathcal{S}$. In our final Subsection \ref{Subsec:glueing} we verify Theorem \ref{Thm:Main_Abstract_Theorem} by employing a localization technique and gluing together the results from the different regions.

In order to decompose $B$ into the different relevant regions, we note that, as a consequence of Assumption \ref{Ass_for_theorem_I}, we can split the critical points as $\mathcal{S}=\mathcal{S}_0\cup \mathcal{S}_1$ with
\begin{itemize}
    \item for all $x\in \mathcal{S}_0$ there exist $c_x,r_x>0$, with
    \begin{align}
    \label{Eq:Open_Convex}
      \nabla^2 F(y)\geq c_x  >0
    \end{align}
for all $y\in B_{r_x}(x)\cap H$,\\
    \item for all $x\in \mathcal{S}_1$ there exists $c_x,r_x>0$ and a direction $\zeta_x\in H$, with 
    \begin{align}
        \label{Eq:Open_Saddle}
        \langle \zeta_x, \nabla^2 F(y)\,  \zeta_x\rangle\leq -c_x<0
    \end{align}
   for all $y\in B_{r_x}(x)\cap H$.
\end{itemize}
In the following Subsection \ref{Subsec:Analysis away from the critical points} we investigate functions with a support away from the critical points $\mathcal{S}$, and subsequently we investigate functions supported in one of the regions $B_{r_x}(x)$ in Subsection \ref{Subsec:Analysis around the critical points}.

\subsection{Analysis away from the critical points}
\label{Subsec:Analysis away from the critical points}
We first recall three standard tools in Propositions~\ref{prop:GST},
\ref{Prop:2} and~\ref{Prop:4}: the ground-state transformation, the NGS bound
for Schr\"odinger-type quadratic forms, and a large deviation upper bound for
exponential integrals. The first proposition is elementary and shows how to transform the underlying
base measure of a Dirichlet form, at the cost of introducing the additional
potential $V_\beta$.
\begin{proposition}[Ground state transformation]
\label{prop:GST}
Let $f\in \operatorname{Lip}_b(B)$
and set   \\ $g=Z_\beta^{-1/2} f e^{-\beta U/2}  $.    Then
\begin{equation}
\label{Eq:Schroedinger_Op}
   \frac1\beta\int_B \|\nabla f\|_H^2\,d\mu_\beta
   =
   \int_B
   \left[
      \frac1\beta\|\nabla g\|_H^2
      +V_\beta g^2
   \right]\,d\gamma_\beta ,
   \end{equation}
where $V_\beta$ is defined in \eqref{Eq:Def_V_beta}.
\end{proposition}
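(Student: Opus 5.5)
The plan is to carry out the classical ground-state computation: write $f$ in terms of $g$, expand the square, and remove the cross term by a Gaussian integration by parts with respect to $\gamma_\beta$. The only genuinely infinite-dimensional point is justifying that integration by parts for the vector field $\nabla U$.

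\textbf{Algebraic step.} By definition $f=Z_\beta^{1/2}e^{\beta U/2}g$. Since $U\in C^2(B)$ and $f\in\operatorname{Lip}_b(B)$, the chain rule (valid $\gamma_\beta$-a.e.\ by Enchev--Stroock) gives
\[
\nabla f = Z_\beta^{1/2}e^{\beta U/2}\Bigl(\nabla g+\tfrac{\beta}{2}\,g\,\nabla U\Bigr).
\]
Using $d\mu_\beta=Z_\beta^{-1}e^{-\beta U}d\gamma_\beta$, the exponential weights cancel, and
\[
\frac1\beta\int_B\|\nabla f\|_H^2\,d\mu_\beta=\int_B\Bigl[\frac1\beta\|\nabla g\|_H^2+\langle \nabla g,\nabla U\rangle_H\, g+\frac\beta4 g^2\|\nabla U\|_H^2\Bigr]d\gamma_\beta .
\]
The last term is already the first summand of $V_\beta g^2$. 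It therefore remains to identify the cross term $\int\langle\nabla (g^2/2),\nabla U\rangle_H\,d\gamma_\beta$.

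\textbf{Gaussian integration by parts.} The measure $\gamma_\beta$ has Cameron--Martin space $H$ with norm scaled by $\beta$. The basic identity is $\int\partial_h\varphi\,d\gamma_\beta=\beta\int\varphi\,\langle h,x\rangle\,d\gamma_\beta$ for $h\in B^*_H$. Its vector-field version for $G=\nabla U$ reads
\[
\int_B\langle\nabla\varphi,\nabla U\rangle_H\,d\gamma_\beta=\int_B\varphi\bigl(\beta\langle DU(x),x\rangle-\Delta U(x)\bigr)\,d\gamma_\beta .
\]
Here the divergence of $\nabla U$ is $\operatorname{tr}\nabla^2U=\Delta U$, which is well defined by Goodman's theorem. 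Applying this with $\varphi=g^2/2$ gives the cross term $\int g^2\bigl(\frac\beta2\langle DU,x\rangle-\frac12\Delta U\bigr)d\gamma_\beta$. Adding the pieces yields exactly $V_\beta g^2$, which proves \eqref{Eq:Schroedinger_Op}.

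\textbf{Justification, and the expected main obstacle.} The difficulty is rigor in the integration by parts: $\nabla U$ is unbounded and $g$ is not Lipschitz in general.

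First I would prove the identity for $U$ replaced by finite-dimensional approximations. Take an orthonormal basis $(e_k)\subset B^*_H$ and set $U_n(x)=U(P_nx)$, where $P_nx=\sum_{k\le n}\langle e_k,x\rangle e_k$. On the range of $P_n$ this reduces to the standard $\mathbb R^n$ Gaussian integration by parts applied to $\varphi$ times a $C^1$ field, and $\varphi$ may first be mollified or taken cylindrical.

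Then I would pass to the limit $n\to\infty$:
\begin{itemize}
\item $\nabla U_n\to\nabla U$ and $\langle DU_n(x),x\rangle\to\langle DU(x),x\rangle$ $\gamma$-a.e.\ by martingale convergence of $P_n x\to x$ and continuity of $DU$.
\item $\Delta U_n=\operatorname{tr}(P_n\nabla^2U(P_n\cdot)P_n)\to\Delta U$ by continuity of the trace from Goodman's theorem.
\end{itemize}
Domination comes from \eqref{eq:expo_control} together with Fernique's theorem, which gives $e^{2\tau\|x\|_B^2}\in L^1(\gamma_\beta)$ at least for $\beta$ large. This control uses that $g^2\le Z_\beta^{-1}\|f\|_\infty^2e^{-\beta\inf U}$ is bounded. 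It also uses that the term $g\langle\nabla g,\nabla U\rangle$ only involves $\nabla g=Z_\beta^{-1/2}e^{-\beta U/2}(\nabla f-\frac\beta2 f\nabla U)$, whose square is again dominated by $C(1+\|DU\|_{B^*}^2)$.

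The one subtle point is $\Delta U$ integrability, which needs a growth bound on $\nabla^2 U$. I would first try to avoid it by reading the identity as an equality of extended integrals, with the left side finite and all other terms integrable. Failing that, I would localize with bounded cutoffs in $\|x\|_B$ and use monotone convergence on the nonnegative parts.
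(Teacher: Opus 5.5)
Your argument is correct and is the one the paper intends. The paper omits the proof as elementary, and the identity is exactly your expansion of $\nabla f=Z_\beta^{1/2}e^{\beta U/2}(\nabla g+\frac{\beta}{2}g\nabla U)$, followed by Gaussian integration by parts of the cross term $\int\langle\nabla(g^2/2),\nabla U\rangle_H\,d\gamma_\beta$ with divergence $\beta\langle DU(x),x\rangle-\Delta U(x)$.

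On the integrability of $\Delta U$, which the paper does not address, your proposed fixes would not work as stated. Cutoffs in $\|x\|_B$ and the approximations $U\circ P_n$ give no domination of the Hessian term, because $D^2U$ need not be bounded on the non-compact balls of $B$. Cutoffs $(1-d_H(x,K)/r)^+$ work instead, with $d_H(x,K):=\inf\{\|h\|_H: x-h\in K\}$ and $K$ compact with $\gamma_\beta(K)>0$. These are $H$-Lipschitz with constant $1/r$, increase to $1$ almost everywhere, and are supported in the compact set $K+\{\|h\|_H\le r\}$, where $DU$ and $\nabla^2U$ (in trace norm) are bounded. There the localized identity follows from one-dimensional integrations by parts along $e_1,\dots,e_n$ for $U$ itself. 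The one-sided bound on $\Delta U$ implied by \eqref{Eq:Th_Assumption} then lets your monotone-convergence step remove the cutoff.
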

\noindent
The next proposition is a localized form of the Nelson--Glimm--Segal
bound, or NGS bound, see \cite[p.~159]{Simon1974}. It follows from the
Gaussian logarithmic Sobolev inequality
\[
   \frac1\beta
   \int_B \|\nabla f\|_H^2\,d\gamma_\beta
   \ge
   \frac12\,\operatorname{Ent}_{\gamma_\beta}(f^2),
\]
and the entropy variational formula, see also  \cite[Theorem~7]{Gross1975}.
\begin{proposition} [NGS bound]
\label{Prop:2}
Let $g\in \mathrm{Lip}_b(B)$ with $\|g\|_{L^2(d\gamma_\beta)} =1$. Then 
\begin{align}
\label{Eq:Schroedinger_op_lower_bound}
  \int_B
   \left[
      \frac1\beta\|\nabla g\|_H^2
      +V_\beta g^2
   \right]\,d\gamma_\beta
   \ge
   -\frac12\log\int_{\operatorname{supp}(g)} e^{-2V_\beta}\,d\gamma_\beta,
\end{align}
where $V_\beta$ is defined in \eqref{Eq:Def_V_beta}.
\end{proposition}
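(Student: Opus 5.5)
The statement to prove is Proposition~\ref{Prop:2}, the localized NGS bound. I would combine the Gaussian logarithmic Sobolev inequality for $\gamma_\beta$ with the Gibbs variational formula for entropy, and handle the localization by simply noting where $g^2$ vanishes.

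First I would apply the log-Sobolev inequality quoted above to $g$. Since $\|g\|_{L^2(\gamma_\beta)}=1$, the measure $\nu:=g^2\,d\gamma_\beta$ is a probability measure, and the inequality reads
\[
\frac1\beta\int_B\|\nabla g\|_H^2\,d\gamma_\beta\ \ge\ \frac12\int_B g^2\log g^2\,d\gamma_\beta .
\]
Adding $\int V_\beta g^2\,d\gamma_\beta$ to both sides, the left-hand side of \eqref{Eq:Schroedinger_op_lower_bound} is bounded below by
\[
\frac12\int_B g^2\bigl(\log g^2+2V_\beta\bigr)\,d\gamma_\beta .
\]
Here $V_\beta\in C(B)$. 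The integral $\int V_\beta g^2\,d\gamma_\beta$ may a priori equal $+\infty$, in which case there is nothing to prove. Its negative part is controlled by \eqref{Eq:Th_Assumption}, at least for large $\beta$; in general we may argue with $\min(V_\beta,N)$ and let $N\to\infty$ by monotone convergence.

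Next I would restrict everything to $S:=\operatorname{supp}(g)$, outside of which $g^2=0$. Jensen's inequality for the concave function $\log$ with respect to the probability $\nu$, which is concentrated on $\{g\neq0\}\subset S$, gives
\[
-\int_S g^2\bigl(\log g^2+2V_\beta\bigr)\,d\gamma_\beta
=\int \log\frac{e^{-2V_\beta}}{g^2}\,d\nu
\le \log\int_{\{g\ne0\}} e^{-2V_\beta}\,d\gamma_\beta
\le \log\int_S e^{-2V_\beta}\,d\gamma_\beta .
\]
This is exactly the entropy variational (Donsker--Varadhan) duality $\operatorname{Ent}_{\gamma_\beta}(g^2)\ge\int\phi g^2\,d\gamma_\beta-\log\int_S e^{\phi}\,d\gamma_\beta$ with $\phi=-2V_\beta$, localized to $S$. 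Multiplying by $-\tfrac12$ and combining with the first step yields \eqref{Eq:Schroedinger_op_lower_bound}.

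The main obstacle is making sure every integral is meaningful rather than carrying out any estimate. The right-hand side may be $-\infty$ (if $\int_S e^{-2V_\beta}\,d\gamma_\beta=\infty$), in which case the claim is trivial. The log-Sobolev inequality must be valid for $\operatorname{Lip}_b$ functions, where $\nabla g$ exists only a.e.\ by Enchev--Stroock; I would justify this by approximating $g$ with cylindrical functions, using that the Cameron--Martin Dirichlet form of $\gamma_\beta$ is closed with $\operatorname{Lip}_b(B)$ in its domain. Finally, the term $g^2\log g^2$ must be integrable, which holds because $g$ is bounded.
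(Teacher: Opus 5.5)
Your proof is correct and follows the route the paper indicates, which it does not spell out: the Gaussian log-Sobolev inequality for $\gamma_\beta$, combined with the entropy variational formula, applied via Jensen to the probability measure $g^2\,d\gamma_\beta$ concentrated on $\{g\neq 0\}\subset\operatorname{supp}(g)$. Your remarks on extended-valued integrals and on extending the log-Sobolev inequality from cylindrical functions to $\operatorname{Lip}_b(B)$ fill in the technical details the paper leaves implicit.
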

\noindent
The exponential integral on the right-hand side of
\eqref{Eq:Schroedinger_op_lower_bound} can be controlled by large-deviation
estimates. More precisely, we shall use the following variant of Varadhan's
lemma~\cite{Varadhan1966}, \cite[Section~4.3]{DemboZeitouni1998}.

\begin{proposition}[Varadhan upper bound]
\label{Prop:4}
For each $\beta>0$, let $\sigma_\beta$ be a probability measure on a separable
Banach space $B$,  let $h_\beta:B\to(-\infty,+\infty]$ be measurable and  let
$J:B\to[0,\infty]$ have compact sublevel sets. 
Assume that $ \widetilde h_\alpha:=\inf_{\beta\ge\alpha} h_\beta $
is lower semicontinuous for every $\alpha>0$
and that
\begin{itemize}
\item[(i)]  $
   \varlimsup_{\beta\to\infty}
   \frac1\beta\log\sigma_\beta(C)
   \le
   -\inf_{x\in C}J(x) $   for every closed set $C\subset B$;
\item[(ii)] $h_\beta\to h$ uniformly on compact sets;
\item[(iii)]
$   \varliminf_{\beta\to\infty}\inf_{x\in B}h_\beta(x)>-\infty  $.
\end{itemize}
Then, for every closed set $C\subset B$,
\[
   \varlimsup_{\beta\to\infty}
   \frac1\beta
   \log\int_C e^{-\beta h_\beta}\,d\sigma_\beta
   \le
   -\inf_{x\in C}\bigl[h(x)+J(x)\bigr].
\]
\end{proposition}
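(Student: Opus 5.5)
This is the standard Laplace/Varadhan upper bound, and I would prove it by a covering argument. First reduce to a level-set statement. Fix a closed set $C$ and a number $L<\inf_{C}[h+J]$; it suffices to show $\varlimsup_\beta \beta^{-1}\log\int_C e^{-\beta h_\beta}d\sigma_\beta\le -L$. By (iii), choose $M$ and $\beta_0$ with $h_\beta\ge -M$ for all $\beta\ge\beta_0$. Pick $R>L+M$ and split $C=(C\cap K_R)\cup(C\setminus K_R)$, where $K_R=[J\le R]$ is compact.

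The far piece is handled by exponential tightness coming from (i). Let $G$ be a small open neighbourhood of $K_R$, to be chosen below. On $C\setminus G$, which is closed, we have $\int_{C\setminus G}e^{-\beta h_\beta}d\sigma_\beta\le e^{\beta M}\sigma_\beta(C\setminus G)$. By (i) this is at most $e^{\beta(M-\inf_{C\setminus G}J)+o(\beta)}$, and since $\inf_{C\setminus G}J>R$ the exponent is below $-L$.

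The near piece uses compactness and the regularity of the $h_\beta$. For each $x\in C\cap K_R$ I would use the lower semicontinuity of $\widetilde h_\alpha$ together with (ii) and the lower semicontinuity of $J$ (sublevel sets closed) to find a closed ball $\bar B_x$ around $x$ and an $\alpha_x$ with
\[\inf_{\beta\ge\alpha_x}\inf_{\bar B_x}h_\beta\ge h(x)-\varepsilon \quad\text{and}\quad \inf_{\bar B_x}J\ge \min(J(x),R)-\varepsilon.\]
Getting the first inequality takes a short argument. Uniform convergence on compacts gives $h=\lim\widetilde h_\alpha$ pointwise on compacts, with $\widetilde h_\alpha\uparrow$ in $\alpha$. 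Choose $\alpha$ with $\widetilde h_\alpha(x)\ge h(x)-\varepsilon/2$, then use lower semicontinuity of $\widetilde h_\alpha$ at $x$ to get the ball. If $h(x)=+\infty$, the same argument applies with an arbitrary large threshold replacing $h(x)-\varepsilon$.

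Next cover the compact set $C\cap K_R$ by finitely many interiors of such balls, and let $G$ be the union of these interiors; this is the neighbourhood used for the far piece. On each ball,
\[\int_{C\cap\bar B_x}e^{-\beta h_\beta}d\sigma_\beta\le e^{-\beta(h(x)-\varepsilon)}\sigma_\beta(C\cap \bar B_x),\]
and (i) applied to the closed set $C\cap\bar B_x$ bounds the exponent by $-(h(x)+\min(J(x),R))+2\varepsilon$. This is $\le -L+2\varepsilon$ when $J(x)\le R$, which holds because $x\in K_R$. For a finite sum of exponentials the $\varlimsup$ of $\beta^{-1}\log$ is the maximum of the individual rates, so combining the finitely many balls with the far piece and letting $\varepsilon\to0$ finishes the proof.

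The main obstacle is the local step: the interplay between (ii), which only gives convergence on compacts (a ball in $B$ is not compact), and the need for a lower bound of $h_\beta$ on a whole neighbourhood. This is exactly where the lower semicontinuity of $\widetilde h_\alpha$ is essential, and I would take care with the monotonicity in $\alpha$ and with the case where $h$ takes the value $+\infty$.
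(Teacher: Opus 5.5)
Your proof is correct. The paper does not prove this proposition; it only cites Varadhan and Dembo--Zeitouni, Section~4.3. What you wrote is the standard covering argument behind that citation: local semicontinuity bounds on closed balls, a finite cover of the compact set $C\cap[J\le R]$, and (i) together with (iii) for $C\setminus G$. The lower semicontinuity of $\widetilde h_\alpha$ supplies the bound uniform in $\beta$ that (ii) cannot give on non-compact balls.

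Two small remarks. On $C\setminus G$ you only need $\inf J\ge R$, which follows from $J>R$ pointwise there and suffices because $R>L+M$. Also, (ii) enters only through the pointwise convergence $\widetilde h_\alpha(x)\uparrow h(x)$, so the uniformity on compacts is never actually used.
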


\begin{remark}
\label{Remark:Trace}
In the application below we take $\sigma_\beta=\gamma_\beta$ and
\[
   h_\beta
   :=
   2\beta^{-1}V_\beta
   =
   \frac12\|\nabla U(x)\|_H^2
   +\langle DU(x),x\rangle
   -\frac1\beta\,\Delta U(x).
\]
Assumption~(i) then follows from the Gaussian large-deviation principle for
$\gamma_\beta$, with rate function
\[
   J(x)
   =
   \begin{cases}
      \frac12\|x\|_H^2, & x\in H,\\
      +\infty, & x\in B\setminus H.
   \end{cases}
\]
Moreover, Assumption~(ii) holds with
\[
   h(x)
   =
   \frac12\|\nabla U(x)\|_H^2
   +\langle DU(x),x\rangle,
\]
because $\Delta U$ is continuous and hence bounded on compact subsets of $B$.
Assumption~(iii) is precisely the uniform lower bound~\eqref{Eq:Th_Assumption} on the effective
potential $V_\beta$ in Assumption~\ref{Ass_for_theorem_I}.
Finally, the lower semicontinuity assumption on $\widetilde h_\alpha$ is automatic
in this situation. Indeed, if we write
\[
   h_\beta=h-\frac1\beta q,
   \qquad q:=\Delta U,
\]
then $h$ and $q$ are continuous and
\[
   \widetilde h_\alpha
   =
   \inf_{\beta\ge\alpha}
   \left(h-\frac1\beta q\right)
   =
   \min\left\{
      h,\,
      h-\frac1\alpha q
   \right\}.
\]
Thus $\widetilde h_\alpha$ is continuous, and in particular lower semicontinuous.
\end{remark}
\noindent
With the auxiliary results Proposition \ref{prop:GST}, Proposition \ref{Prop:2} and Proposition \ref{Prop:4} at hand, we are in a position to verify Theorem \ref{Eq:Main_away_from_crit}, which is the main result of this subsection.
\begin{theorem}
\label{Eq:Main_away_from_crit}
    Let $U\in C^2(B)$ satisfy \eqref{Eq:Th_Assumption}. Then the set $\mathcal{S}:=\left\{x\in H:\nabla F=0\right\}$ is compact in $B$, and for all $r>0$ there exist constants $\lambda,\beta_0>0$, such that
   \begin{align}
   \label{Eq:Energy_Lower_Bound_Away_From_Crit}
     \frac{1}{\beta} \int_B \|\nabla \Psi\|_H^2\mathrm{d}\mu_\beta\geq \lambda \beta \|\Psi\|^2_{L^2(\mu_\beta)}
   \end{align}
    for all $\beta\geq \beta_0$ and $\Psi\in \mathrm{Lip}_b(B)$ with $\mathrm{dist}_B\! \left(\mathrm{supp}(\Psi),\mathcal{S}\right)\geq r$.
\end{theorem}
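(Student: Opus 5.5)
The plan is to combine Propositions~\ref{prop:GST}, \ref{Prop:2} and~\ref{Prop:4}, in the setting of Remark~\ref{Remark:Trace}, with a variational identity for the limiting function $h+J$.

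\textbf{The key identity.} For $x\in H$,
\[
h(x)+J(x)=\tfrac12\|\nabla U(x)\|_H^2+\langle \nabla U(x),x\rangle_H+\tfrac12\|x\|_H^2=\tfrac12\|x+\nabla U(x)\|_H^2=\tfrac12\|\nabla F(x)\|_H^2 .
\]
Here I use that $\langle DU(x),x\rangle=\langle\nabla U(x),x\rangle_H$ when $x\in H$. For $x\in B\setminus H$ we have $J=+\infty$. Thus $h+J$ vanishes exactly on $\mathcal S$, and the question becomes whether it stays bounded away from zero off a $B$-neighbourhood of $\mathcal S$.

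\textbf{Compactness of $\mathcal S$.} By \eqref{Eq:Th_Assumption}, $h$ is bounded below by some $-M$. Hence $\mathcal S\subset\{J\le M\}$, which is compact in $B$ because $J$ is a good rate function. It remains to show $\mathcal S$ is closed in $B$. Take $x_n\in\mathcal S$ with $x_n\to x$ in $B$. Then $\|x_n\|_H$ is bounded, so a subsequence converges weakly in $H$, and the limit must be $x$; in particular $x\in H$. The relation $x_n=-\nabla U(x_n)$, together with continuity of $\nabla U:B\to H$, gives $x_n\to -\nabla U(x)$ in $H$. Therefore $x=-\nabla U(x)$, i.e.\ $x\in\mathcal S$.

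\textbf{Positive infimum off a neighbourhood of $\mathcal S$.} Let $C:=\{y\in B:\operatorname{dist}_B(y,\mathcal S)\ge r\}$. I claim $\kappa:=\inf_C(h+J)>0$. Suppose instead $y_n\in C$ with $(h+J)(y_n)\to 0$. Since $h\ge -M$, $J(y_n)$ is bounded, so $(y_n)$ has a $B$-convergent subsequence $y_n\to y$ by compactness of sublevel sets of $J$. Now $h$ is continuous and $J$ is lower semicontinuous on $B$, so $h+J$ is lower semicontinuous and $(h+J)(y)\le 0$. This forces $y\in\mathcal S\cap C=\emptyset$, a contradiction. This step is the main obstacle: it is exactly where lower semicontinuity of $J$, compactness of its level sets and the lower bound \eqref{Eq:Th_Assumption} must cooperate, because $\nabla F$ is only defined on $H$ while the convergence takes place in $B$.

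\textbf{Conclusion.} Proposition~\ref{Prop:4}, applied as in Remark~\ref{Remark:Trace}, gives for $\beta\ge\beta_0$
\[
\int_C e^{-2V_\beta}\,d\gamma_\beta\le e^{-\beta\kappa/2}.
\]
Let $\Psi$ be as in the statement, set $g=Z_\beta^{-1/2}\Psi e^{-\beta U/2}$ and normalise so that $\|g\|_{L^2(\gamma_\beta)}=\|\Psi\|_{L^2(\mu_\beta)}=1$; note $g$ is bounded and locally Lipschitz. Since $\operatorname{supp}(g)\subset C$, Propositions~\ref{prop:GST} and~\ref{Prop:2} give
\[
\frac1\beta\int\|\nabla\Psi\|_H^2\,d\mu_\beta\ge -\tfrac12\log e^{-\beta\kappa/2}=\frac{\beta\kappa}{4}.
\]
This is the claim with $\lambda=\kappa/4$.

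One technical point remains: $g$ need not be globally Lipschitz, since $U$ is only $C^2$. I would handle this by a truncation and approximation argument in Proposition~\ref{Prop:2}, using the Lipschitz property of $U$ on bounded sets.
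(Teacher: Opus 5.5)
Your proof is correct and follows the paper's own route: the identity $h+J=\frac12\|\nabla F\|_H^2$, compactness of $\mathcal S$ from the lower bound on $h$ and the compact sublevel sets of $J$, positivity of $\inf_C(h+J)$ by compactness and lower semicontinuity, and then Propositions~\ref{prop:GST}, \ref{Prop:2} and~\ref{Prop:4}. Two of your steps are more careful than the paper's corresponding ones: the closedness of $\mathcal S$, which you derive from continuity of $\nabla U:B\to H$, and the sequential lower-semicontinuity argument for the infimum; the Lipschitz issue you flag for $g$ is one the paper does not address.
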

\begin{proof}
We note that $h(x)+\frac{1}{2}\|x\|^2_H$ with $h(x):=\frac{1}{2}\|\nabla U\|_H^2 +  \langle \nabla U, \cdot \rangle$ is identical to $\frac{1}{2}\left\|\nabla F(x)\right\|_H^2$. Hence,
    \begin{align*}
        \mathcal{S}=\left\{x\in H:h(x)+\frac{1}{2}\|x\|^2_H=0\right\}.
    \end{align*}
    By Assumption \eqref{Eq:Th_Assumption}, it is clear that $h(x)\geq -c$, and therefore
    \begin{align}
        \label{Eq:h_J_lower_bound}
        h(x)+\frac{1}{2}\|x\|^2_H\geq 1
    \end{align}
for $x\in H\setminus K_{R}$ and $R$ large enough, where denote again $K_R:=\{x:\frac{1}{2}\|x\|^2_H\leq R\}$. In particular, $\mathcal{S}\subseteq K_R$ and we see that $\mathcal{S}$ is pre-compact in $B$. Equipping $H$ with the $B$-norm, the map $ \nabla F(x):H\longrightarrow H$ remains continuous, and hence $\mathcal{S}$ is compact in $B$. 

   Regarding the proof of \eqref{Eq:Energy_Lower_Bound_Away_From_Crit} we define the closed set
   \begin{align*}
     C:=\{x\in B: \mathrm{dist}_B(x,\mathcal{S})\geq r\}  
   \end{align*}
and use \eqref{Eq:h_J_lower_bound} again
    \begin{align*}
        \inf_{x\in C}[h(x)+\frac{1}{2}\|x\|^2_H]\geq \min\{1,\inf_{x\in C\cap K_R}[h(x)+\frac{1}{2}\|x\|^2_H]\}=\min\{1,h(x_*)+\frac{1}{2}\|x_*\|^2_H\},
    \end{align*}
  for some $x_*\in C\cap K_R$, where we utilize that $C\cap K_R$ is compact. Using again
  \begin{align*}
      h(x)+\frac{1}{2}\|x\|^2_H= \left\|\nabla F(x)\right\|_H^2\geq 0,
  \end{align*}
we have $\mathcal{S}=\left\{x\in H:h(x)+\frac{1}{2}\|x\|^2_H\leq 0\right\}$. Since $C$ is disjoint to $\mathcal{S}$, we consequently obtain $h(x_*)+\frac{1}{2}\|x_*\|^2_H>0$ and hence $ \inf_{x\in C}[h(x)+\frac{1}{2}\|x\|^2_H]>0$. Together with Proposition \ref{Prop:2} and Proposition \ref{Prop:4} this concludes the proof.
\end{proof}

\subsection{Analysis around the critical points}
\label{Subsec:Analysis around the critical points}
In the first result of this Subsection, we investigate functions $\Psi$ that are supported in a neighborhood of a local minimum $x$. Notably, the lower bound \eqref{Eq:lower_bound_conditional} holds only for elements that are orthogonal to the function $\Phi_\beta$ constructed below.
\begin{proposition}
\label{Prop:Min}
    Let $x \in \mathcal S_0$ and $B_{r_x}(x)$ be as in \eqref{Eq:Open_Convex}. Then, for all $0<r<r_x$, there exists a function $\Phi_\beta$ with $\|\Phi_\beta\|_{L^2(\mu_\beta)}=1$ and $\mathrm{supp}(\Phi_\beta)\subseteq B_{r}(x)$ and constants $C, \beta_0>0$
     such that for $\beta>\beta_0$
    \begin{align}
    \label{Eq:lower_bound_conditional}
         \frac{1}{\beta} \int_B \|\nabla \Psi\|_H^2 \, d\mu_\beta   \ge C\|\Psi\|^2_{L^2(\mu_\beta)}
\end{align}
for all $\Psi\perp_{L^2(\mu_\beta)} \Phi_\beta$ with $\mathrm{supp}(\Psi)\subseteq B_{r_x}(x)$. Furthermore, there exists $C,\kappa>0$ such that
\begin{align}
\label{Eq:generic_exponential_control}
     \frac{1}{\beta} \int_B \|\nabla \Phi_\beta\|_H^2 d\mu_\beta \leq C e^{-\beta \kappa}.
\end{align}
\end{proposition}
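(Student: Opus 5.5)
The plan is to compare $\mu_\beta$ locally with a uniformly log-concave measure and then use a Brascamp--Lieb / Bakry--\'Emery Poincaré inequality. Fix $0<r<r_x$ and a cutoff $\chi\in \operatorname{Lip}_b(B)$ with $0\le\chi\le1$, $\chi=1$ on $B_{r/2}(x)$ and $\operatorname{supp}\chi\subseteq B_r(x)$. We set $\Phi_\beta:=\chi/\|\chi\|_{L^2(\mu_\beta)}$. Since $\nabla\chi$ vanishes on $B_{r/2}(x)$, and $x$ is the unique critical point in $B_{r_x}(x)\cap H$ (strict convexity \eqref{Eq:Open_Convex}), we get $F\ge F(x)+\kappa'$ on $H\cap (B_{r}(x)\setminus B_{r/2}(x))$ for some $\kappa'>0$. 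Here we use that $\frac12\|\cdot\|_H^2$-sublevel sets are compact in $B$ and that $F$ restricted to them is $B$-continuous.

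The Gaussian large deviation principle for $\gamma_\beta$ then gives two things. First, a Varadhan upper bound (Proposition~\ref{Prop:4} with $h_\beta=U$) yields $\int_{B_r\setminus B_{r/2}}e^{-\beta U}d\gamma_\beta\le e^{-\beta(F(x)+\kappa'-o(1))}$. Second, the corresponding lower bound gives $\int_{B_{r/2}}e^{-\beta U}d\gamma_\beta\ge e^{-\beta(F(x)+o(1))}$. Dividing, we obtain \eqref{Eq:generic_exponential_control} with any $\kappa<\kappa'$, since $\|\nabla\chi\|_H$ is bounded by the $H$-Lipschitz constant of $\chi$.

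For \eqref{Eq:lower_bound_conditional}, let $\tilde U$ be a $C^2$ modification of $U$ that agrees with $U$ on $B_{r_x}(x)$ and for which $\tilde F=\tilde U|_H+\frac12\|\cdot\|_H^2$ satisfies $\nabla^2\tilde F\ge c>0$ globally on $H$. A concrete choice is to take $\tilde U$ as $U$ near $x$ glued with a second-order Taylor polynomial of $U$ at $x$, via a convexity-preserving construction such as a supremum of local tangent quadratics, or alternatively to restrict attention to a convex neighborhood. The measure $\tilde\mu_\beta\propto e^{-\beta\tilde U}d\gamma_\beta$ is then formally $\propto e^{-\beta\tilde F}$ with $\beta\nabla^2\tilde F\ge\beta c$. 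By the infinite-dimensional Bakry--\'Emery / Brascamp--Lieb inequality, proved via finite-dimensional cylindrical approximation, it satisfies
\[
\frac1\beta\int\|\nabla\Psi\|_H^2\,d\tilde\mu_\beta\ \ge\ c\,\operatorname{Var}_{\tilde\mu_\beta}(\Psi).
\]
For $\Psi$ supported in $B_{r_x}(x)$, the two measures agree up to the normalization $\tilde Z_\beta/Z_\beta$. Hence
\[
\frac1\beta\int\|\nabla\Psi\|_H^2\,d\mu_\beta\ \ge\ c\Bigl(\|\Psi\|^2_{L^2(\mu_\beta)}-\tfrac{Z_\beta}{\tilde Z_\beta}\Bigl(\int\Psi\,d\mu_\beta\Bigr)^2\Bigr).
\]
The orthogonality $\Psi\perp\chi$ gives $\int\Psi\,d\mu_\beta=\int\Psi(1-\chi)\,d\mu_\beta$. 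By Cauchy--Schwarz, this is at most $\|\Psi\|\,\mu_\beta(\operatorname{supp}(1-\chi)\cap B_{r_x})^{1/2}$. Since $\tilde\mu_\beta$ concentrates at $x$, the ratio $Z_\beta/\tilde Z_\beta\cdot\mu_\beta(B_{r_x}\setminus B_{r/2})$ is exponentially small by the same LDP estimates. This yields \eqref{Eq:lower_bound_conditional} with constant $c/2$ for large $\beta$.

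The main obstacle is the construction of the convex extension $\tilde U$ in infinite dimensions while staying in $C^2(B)$ and keeping $\nabla^2\tilde F\ge c$. Another delicate point is rigorously justifying the Bakry--\'Emery inequality for the Gibbs perturbation of $\gamma_\beta$. To get the inequality, I would try first to approximate by the cylindrical potentials $U\circ P_n$, with $P_n$ finite-rank projections built from $B^*$, for which $\nabla^2\tilde F_n\ge c$ holds by restriction. The Brascamp--Lieb inequality is then classical for each $n$, and one passes to the limit using the growth bound \eqref{eq:expo_control} and Fernique's theorem.
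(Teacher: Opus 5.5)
Your plan is essentially the paper's proof: the same normalized cutoff $\Phi_\beta$, a convexity-based Poincar\'e inequality for the part of $\mu_\beta$ on $B_{r_x}(x)$, and Laplace/large-deviation bounds showing that the annulus $B_{r_x}(x)\setminus B_{r/2}(x)$ carries exponentially small relative mass. (The paper phrases the last step as $\|\tilde\Phi_\beta-\Phi_\beta\|_{L^2(\mu_\beta)}\le 1/\sqrt2$, with $\tilde\Phi_\beta$ the normalized indicator of the ball; this is equivalent to your variance/Cauchy--Schwarz computation.) The obstacle you single out is avoided there by your second option rather than by a smooth extension: one sets $\tilde U=U$ on the convex set $B_{r_x}(x)$ and $\tilde U=+\infty$ outside, so that $\frac{1-c_x}{2}\|\cdot\|_H^2+\tilde U$ is convex along $H$, and the Poincar\'e inequality for $H$-convex perturbations of Gaussian measures \cite[Theorem~11.5.4]{Ustunel2015} applies directly to $e^{-\beta\tilde U}d\gamma_\beta$ with constant $c_x$ --- which matters, since your supremum of tangent quadratics would not be $C^2$, and smooth gluing with the nonsmooth norm $\|\cdot\|_B$ is genuinely awkward.
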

\begin{proof}
We define $\tilde U: B \to (-\infty, + \infty]$ by setting $\tilde U= U$ on $B_{r_x}(x)$ and $\tilde U =+\infty$ otherwise. 
We observe that the functional $h\mapsto \frac{1-c_x}{2} \|h\|_H^2 + \tilde U(h)$ is convex. By~\cite[Theorem 11.5.4]{Ustunel2015}
it follows that for every $\Psi \in {\rm{Lip}_b(B)}$ with $\int_B \Psi e^{-\beta \tilde U} d\gamma_\beta =0$
\[      \frac{1}{\beta}\int_B \|\nabla \Psi\|_H^2  e^{-\beta \tilde U} d\gamma_\beta     \ge  c_x  \int_B |\Psi|^2 e^{-\beta \tilde U} d\gamma_\beta .   \]
Fix $0<r<r_x$ and set $\tilde \Phi_\beta(y):=\left(\int_{B_{r_x}(x)}\mathrm{d}\mu_\beta\right)^{-\frac{1}{2}}$ for $y\in B_{r_x}(x)$ and $\tilde \Phi_\beta(y):=0$ otherwise) and
$\Phi_\beta(y):=\|\chi(\|\cdot -x\|_B)\|_{L^2(\mu_\beta)}^{-1}\chi(\|y-x\|_B)$, where 
$\chi$ is a smooth function with $\chi(t)=1$ in case $|t|\leq r/2$ and $\chi(t)=0$ for $|t|>r$. Using the strict convexity of $F$ on $B_{r_x}(x)$, with $x$ being the global minimum of $F$ within $B_{r_x}(x)$ we obtain by the Laplace asymptotic for the Gaussian measure $\gamma_\beta$ for suitable constants $C,\kappa>0$
\begin{align}
\label{Eq:laplace_asy_around_local_minima}
    \left(\int_{B_{r_x}(x)}\mathrm{d}\mu_\beta\right)^{-\frac{1}{2}}\int_{B_{r_x}(x)\setminus B_{\frac{r}{2}}(x)}\mathrm{d}\mu_\beta\leq Ce^{-\kappa\beta},
\end{align}
and in particular
\begin{align*}
   \|\tilde \Phi_\beta -\Phi_\beta\|_{L^2(\mu_\beta)}\leq \frac{1}{\sqrt{2}} 
\end{align*}
 for $\beta$ large enough. Hence we obtain for all $\Psi$ orthogonal to $\Phi_\beta$ that the orthogonal projection $\pi_\beta \Psi$ away from $\tilde\Phi_\beta$ satisfies $\|\pi_\beta\Psi\|_{L^2(\mu_\beta)}^2\geq \frac{1}{2}\|\Psi\|_{L^2(\mu_\beta)}^2$, and in particular
\begin{align*}
 \frac{1}{\beta}\int_B \|\nabla \Psi\|_H^2  e^{-\beta U} d\gamma_\beta \geq c_x \left\|\pi_\beta\Psi\right\|^2_{L^2(\mu_\beta)}\geq \frac{c_x}{2}\|\Psi\|^2_{L^2(\mu_\beta)}.
\end{align*}
Regarding \eqref{Eq:generic_exponential_control}, we note that $\|\chi(\|\cdot -x\|_B)\|_{L^2(\mu_\beta)}^2=\left(1+O_{\beta\rightarrow \infty}\!\left(e^{-\kappa\beta}\right)\right)\int_{B_{r_x}(x)}\mathrm{d}\mu_\beta$ by \eqref{Eq:laplace_asy_around_local_minima}. Furthermore, $\nabla \chi(\|\cdot -x\|_B)$ is an almost everywhere bounded function supported in $B_{r_x}(x)\setminus B_{\frac{r}{2}}(x)$, hence we obtain for suitable constants $C_1,C_2>0$
\begin{align*}
\int_B \|\nabla \Phi_\beta\|_H^2 d\mu_\beta & =\left(1+O_{\beta\rightarrow \infty}\!\left(e^{-\kappa\beta}\right)\right)\left(\int_{B_{r_x}(x)}\mathrm{d}\mu_\beta\right)^{-\frac{1}{2}}\int \|\nabla \chi(\|\cdot -x\|_B)\|^2\mathrm{d}\mu_\beta\\
& \leq C_1 \left(\int_{B_{r_x}(x)}\mathrm{d}\mu_\beta\right)^{-\frac{1}{2}}\int_{B_{r_x}(x)\setminus B_{\frac{r}{2}}(x)} \mathrm{d}\mu_\beta \leq C_2 e^{-\kappa\beta}.
\end{align*}
\end{proof}
\noindent
For critical points $x\in \mathcal{S}$ that are not local minima, we obtain an unconditional lower bound in the following Proposition \ref{Prop:Saddle}.
\begin{proposition}
\label{Prop:Saddle}
    Let $x,c_x,B_{r_x}(x)$ and $\eta:=\zeta_x$ be as in \eqref{Eq:Open_Saddle}. Then, there exists a constant $C>0$ such that for all $\Psi\in \mathrm{Lip}_b(B)$ with $\mathrm{supp}(\Psi)\subseteq B_{r_x}(x)$
    \begin{align*}
         \frac{1}{\beta}\int_B \|\nabla \Psi\|_H^2  d\mu_\beta    \geq C\|\Psi\|^2_{L^2(\mu_\beta)}.
    \end{align*}
\end{proposition}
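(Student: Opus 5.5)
We reduce the estimate to a one-dimensional Poincaré-type bound along the unstable direction $\eta=\zeta_x$. Since $\|\nabla\Psi\|_H^2\ge \|\eta\|_H^{-2}\,|\partial_\eta\Psi|^2$, where $\partial_\eta\Psi(y)=\langle\nabla\Psi(y),\eta\rangle_H$ is the derivative along the Cameron--Martin direction $\eta$, it suffices to prove
\[
\frac1\beta\int_B|\partial_\eta\Psi|^2\,d\mu_\beta\ \ge\ C\int_B\Psi^2\,d\mu_\beta
\]
for $\Psi$ supported in $B_{r_x}(x)$. We may normalize $\|\eta\|_H=1$, replacing $c_x$ by $c_x/\|\eta\|_H^2$.

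\textbf{Step 1: disintegration.} Decompose $B=\mathbb R\eta\oplus B_0$, where $B_0=\ker\ell$ and $\ell\in B^*$ satisfies $\ell_H=\eta$. This is possible at least approximately: $B^*_H$ is dense in $H$, so we first replace $\eta$ by a nearby element of $B^*_H$, which keeps \eqref{Eq:Open_Saddle} valid with $c_x/2$ after shrinking $r_x$ slightly. Under $\gamma_\beta$ the coordinate $t=\ell(y)$ is an independent centered Gaussian with variance $\beta^{-1}$, so $d\gamma_\beta=\sqrt{\beta/2\pi}\,e^{-\beta t^2/2}\,dt\otimes d\gamma_\beta^0$. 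Hence
\[
d\mu_\beta\propto e^{-\beta W_{y_0}(t)}\,dt\,d\gamma^0_\beta(y_0),\qquad W_{y_0}(t):=\tfrac{t^2}{2}+U(y_0+t\eta).
\]
By Fubini it then suffices to prove, uniformly in $y_0$, the one-dimensional bound
\[
\frac1\beta\int|\phi'|^2e^{-\beta W}\,dt\ \ge\ C\int\phi^2e^{-\beta W}\,dt
\]
for $\phi$ supported in the interval $I_{y_0}=\{t:y_0+t\eta\in B_{r_x}(x)\}$. This set is an interval because balls are convex, and its length is at most $2r_x/\|\eta\|_B$.

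\textbf{Step 2: one-dimensional concavity.} On $I_{y_0}$ we have $W''(t)=1+D^2U(y_0+t\eta)[\eta,\eta]$. For $y\in H$ this equals $\langle\eta,\nabla^2F(y)\eta\rangle\le -c_x$. For general $y\in B_{r_x}(x)$ we obtain the same bound by density of $H$ in $B$ and continuity of $D^2U$ on $B$ (Fréchet $C^2$). So $W$ is uniformly concave on $I_{y_0}$. The boundary condition is Dirichlet: $\phi$ vanishes at the endpoints since $\Psi$ is Lipschitz and supported inside the ball.

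We then perform the one-dimensional ground state transformation $g=\phi e^{-\beta W/2}$, which gives
\[
\frac1\beta\int|\phi'|^2e^{-\beta W}=\int\Big[\frac1\beta|g'|^2+\Big(\frac\beta4W'^2-\frac12W''\Big)g^2\Big]dt\ \ge\ \frac{c_x}{2}\int g^2 .
\]
Dropping the nonnegative terms $\frac1\beta|g'|^2$ and $\frac\beta4W'^2\,g^2$ leaves exactly $-\frac12W''\ge c_x/2$. The boundary terms from integrating by parts, $\frac12[W'g^2]$, vanish because $g$ vanishes at the endpoints. Undoing the substitution yields the claim with $C=c_x/2$, uniform in $\beta$ and $y_0$.

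\textbf{Main obstacle.} The delicate point is making Step 1 rigorous. It requires three things: choosing $\eta$ in $B^*_H$ so that the Gaussian factorizes along $\eta$; checking that the Enchev--Stroock gradient of a $\operatorname{Lip}_b$ function restricts almost surely to the classical derivative of $t\mapsto\Psi(y_0+t\eta)$, which is absolutely continuous on each line; and extending the Hessian bound from $H$ to $B$. I would first try to handle the gradient issue by approximating $\Psi$ by cylindrical functions localized in the ball. If that proves awkward, the fallback is to work directly with Lipschitz functions on lines, for which absolute continuity in $t$ is immediate.
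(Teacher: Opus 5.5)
Your proposal is correct and is essentially the paper's argument: restrict to the directional derivative along the unstable direction $\eta=\zeta_x$, perform a ground-state transformation in that direction, discard the two nonnegative squares (the kinetic term and the $\tfrac{\beta}{4}|\partial_\eta F|^2$ term), and conclude from $-\tfrac12\partial_\eta^2F\ge c_x/2$. The only difference is that the paper does this computation directly on $\gamma_\beta$, writing $\partial_\eta U=\partial_\eta F-\mathcal I(\eta)$ with the Paley--Wiener integral $\mathcal I(\eta)$ and using Gaussian integration by parts; this works for every $\eta\in H$, so your disintegration step, the approximation of $\eta$ by an element of $B^*_H$, and the resulting shrinking of $r_x$ are not needed.
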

\begin{proof}
   We define $\Phi:=Z_{\beta}^{-\frac{1}{2}}e^{-\frac{\beta}{2}U}\Psi$ and compute with the Paley-Wiener integral $\mathcal{I}$
    \begin{align}
    \nonumber 
         &  \frac{1}{\beta Z_\beta}\int \|\nabla \Psi\|^2_{H}e^{-\beta U}\mathrm{d}\gamma_{\beta} \geq \frac{1}{\beta\|\eta\|_H^2 Z_\beta }\int \|\partial_\eta  \Psi\|^2_{H}e^{-\beta U}\mathrm{d}\gamma_{\beta}\\
         \nonumber 
         & \ \ =\frac{1}{\beta \|\eta\|_H^2}\left\|\left(\partial_\eta +\frac{\beta}{2}\partial_{\eta}U\right)\Phi\right\|_{L^2(\mathrm{d}\gamma_{\beta})}^2\\
         \nonumber 
          & \ \  =\frac{1}{\beta \|\eta\|_H^2}\left\|\left(\partial_\eta +\frac{\beta}{2}\partial_{\eta}F-\frac{\beta}{2} \mathcal{I}(\eta)\right)\Phi\right\|_{L^2(\mathrm{d}\gamma_{\beta})}^2\\
         \nonumber
         & \ \  =\frac{1}{\beta \|\eta\|_H^2}\left\|\left(\partial_\eta -\frac{\beta}{2} \mathcal{I}(\eta)\right)\Phi\right\|_{L^2(\mathrm{d}\gamma_{\beta})}^2+\frac{1}{\beta \|\eta\|_H^2}\left\|\frac{\beta}{2}\partial_{\eta}(F)\Phi\right\|_{L^2(\mathrm{d}\gamma_{\beta})}^2  \\
         \nonumber 
         & \ \  \ \ \ \   - \frac{1}{\|\eta\|_H^2}\int \beta \mathcal{I}(\eta)|\Phi|^2 \partial_{\eta}\mathcal{F}\mathrm{d}\gamma_\beta+\frac{1}{2\|\eta\|_H^2}\int   \partial_{\eta}\big(|\Phi|^2\big)\partial_{\eta}\mathcal{F}\mathrm{d}\gamma_\beta\\
             \nonumber 
         & \ \ \geq - \frac{1}{\|\eta\|_H^2}\int \beta \mathcal{I}(\eta)|\Phi|^2 \partial_{\eta}\mathcal{F}\mathrm{d}\gamma_\beta+\frac{1}{2\|\eta\|_H^2}\int   \partial_{\eta}\big(|\Phi|^2\big)\partial_{\eta}\mathcal{F}\mathrm{d}\gamma_\beta\\
         \label{Eq:Partial_Integration}
         & \ \ = -\frac{1}{2\|\eta\|_H^2}\int  |\Phi|^2 \partial_{\eta}^2\mathcal{F}\mathrm{d}\gamma_\beta \geq \frac{c_x}{2\|\eta\|_H^2}\left\|\Phi\right\|^2_{L^2\! \left(\mathrm{d}\gamma_{\beta}\right)}= \frac{c_x}{2\|\eta\|_H^2}\left\|\Psi\right\|^2_{L^2\! \left(\mathrm{d}\mu_{\beta}\right)},
    \end{align}
    where we have used partial integration with respect to the Gaussian measure $\gamma_\beta$ in the first identity in Eq.~(\ref{Eq:Partial_Integration}). 
\end{proof}

\subsection{Gluing of the regions}
\label{Subsec:glueing}
\begin{proof}[Proof of Theorem \ref{Thm:Main_Abstract_Theorem}] 
    Regarding the finiteness of minima, let $U_x:=B_{r_x}(x)$ be as in \eqref{Eq:Open_Convex} and \eqref{Eq:Open_Saddle}. We note that $\mathcal{S}_0=\mathcal{S}\cap \{x\in B:\nabla^2 F(x)\geq 0\}$ is compact in the $B$-metric. We have $\mathcal{S}_0\subseteq \bigcup_{x\in \mathcal{S}_0}U_x$ and consequently there exists a finite cover $\mathcal{S}_0\subseteq \bigcup_{j=1}^{k_0}U_{x_j}$ with $x_j\in \mathcal{S}_0$. However, $U_x\cap \mathcal{S}=\{x\}$ by the strict convexity of $F$ on $U_x$ and hence
    \begin{align*}
        \mathcal{S}_0=\left(\bigcup_{j=1}^{k_0}U_{x_j}\right)\cap \mathcal{S}_0=\left\{x_1,\dots ,x_{k_0}\right\}.
    \end{align*}
    We further observe that $\mathcal{S}_1=\mathcal{S}\setminus \{x\in B:\nabla^2 F(x)> 0\}$ is compact as well and choose a finite cover $\mathcal{S}_1\subseteq \bigcup_{j=1}^n U_{x_{k_0+j}}$, where $x_{{k_0}+j}\in \mathcal{S}_1$.
    
    We now want to decompose a given $\Psi\in \mathrm{Lip}_b(B)$ according to the cover
    \begin{align*}
        B=\bigcup_{j=1}^{n+k_0}U_{x_j}\cup \{y\in B: \mathrm{dist}_B(y,\mathcal{S})\geq  \delta\},
    \end{align*}
   where we choose $\delta>0$ small enough such that $B_{2\delta}(x_j)\subseteq U_{x_j}$ and such that the sets $B_{2\delta}(x_j)$ are disjoint for $j\in \{1,\dots, n+k_0\}$.  In the following let $f_\pm:[0,\infty)\longrightarrow [0,1]$ be a smooth quadratic partition of unity, i.e. $f_-^2+f_+^2=1$ with $\mathrm{supp}(f_-)\subseteq [0,2\delta)$ and $\mathrm{supp}(f_+)\subseteq (\delta,\infty)$. We then define for $\Psi\in \mathrm{Lip}_b(B)$
    \begin{align*}
        \Psi_0 (x) : & = g_0(x)\Psi(x) := \prod_{i=1}^{n+{k_0}}f_{+}(\|x - x_i\|_B)\Psi(x),\\
        \Psi_j (x) : & = g_j(x) \Psi(x) := f_-(\|x - x_j\|_B)\Psi(x)
    \end{align*}
   for $j\in \{1,\dots , n+{k_0}\}$ and note that $\sum_{j=0}^{n+{k_0}}g_j^2=1$. Clearly, $x\mapsto \|x-x_j\|_B$ is Lipschitz continuous (with constant $C=1$) and therefore the functions $\Psi_j$ are in $\mathrm{Lip}_b(B)$. Using $A\geq a>0$ we have
    \begin{align*}
        \mathcal{E}_\beta[\Psi]\geq \frac{a}{\beta}\int_B \|\nabla \Psi\|_H^2\mathrm{d}\mu_\beta=\frac{a}{\beta}\sum_{j=0}^{n+{k_0}}\int_B \|\nabla \Psi_j\|_H^2\mathrm{d}\mu_\beta-\frac{a}{2\beta}\mathcal{R}[\Psi],
    \end{align*}
    where we define $r_j(x):=\left\|\nabla g_j\right\|^2_H$ and
    \begin{align*}
        \mathcal{R}[\Psi]:=\sum_{j=0}^{n+{k_0}} \int r_j |\Psi|^2\, \mathrm{d}\mu_\beta.
    \end{align*}
    Using that the Lipschitz constant of $g_j$ is bounded, it is clear that the norm of $Dg_j\in B^*$ is uniformly bounded and hence $\|r_j\|_\infty<\infty$ as well. In the following let $\Phi_j$ be the functions from Proposition \ref{Prop:Min} for the local minima $x_j$ and radius $r:=\delta$ and let $\Psi$ be a function such that $\Psi\perp_{L^2(\mu_\beta)} \Phi_j=0$ for $j\in \{1,\dots ,{k_0}\}$. Since $g_j=1$ on the support of $\Phi_j$ we have
    \begin{align*}
        \langle \Psi_j,\Phi_j\rangle_{L^2(\mu_\beta)}= \langle \Psi,\Phi_j\rangle_{L^2(\mu_\beta)}=0,
    \end{align*}
    i.e. $\Psi_j\perp_{L^2(\mu_\beta)} \Phi_j$. Combining Theorem \ref{Eq:Main_away_from_crit}, Proposition \ref{Prop:Min} and Proposition \ref{Prop:Saddle} we obtain for some $c>0$
    \begin{align*}
        \mathcal{E}_\beta[\Psi] & \geq c\sum_{j=0}^{n+{k_0}}\left\|\Psi_j\right\|_{L^2(\mu_\beta)}^2  -\frac{\sum_{j=0}^{n+k_0}\|r_j\|_\infty}{2\beta}  \left\|\Psi\right\|_{L^2(\mu_\beta)}^2\\
        &= \left(c-\frac{\sum_{j=0}^{n+{k_0}}\|r_j\|_\infty}{2\beta}\right)\left\|\Psi\right\|_{L^2(\mu_\beta)}^2\\
        &\geq \frac{c}{2}\left\|\Psi\right\|_{L^2(\mu_\beta)}^2,
    \end{align*}
    for $\beta$ large enough. By the minmax principle, we conclude $\lambda_{\beta}^{(k_0)}\geq \frac{c}{2}>0$.

    Finally, in order to show that the first $k_0$ eigenvalues $\lambda^{(0)}_\beta,\dots ,\lambda^{(k_0-1)}_\beta$ are exponentially small we define the $k_0$ dimensional vector space 
    \begin{align*}
     \mathcal{V}:=\mathrm{span}\left\langle \Phi_j:j\in \{1,\dots ,{k_0}\}\right\rangle  ,
    \end{align*}
    By the definition of $\lambda_{\beta}^{(k_0)}$ in \eqref{Eq:min_max_definition_eigenvalue} and the upper bound in \eqref{Eq:generic_exponential_control} we obtain for $j\in  \{0,\dots ,{k_0}-1\}$ and suitable constants $C,\kappa>0$
    \begin{align*}
        \lambda^{(j)}_\beta\leq \lambda^{(k_0-1)}_\beta\leq \sup_{f\in \mathcal{V}\setminus \{0\}}\frac{\mathcal{E}_{\beta}[f]}{\|f\|^2_{L^2(\mathrm{d}\mu_{\beta})}}\leq Ce^{-\beta \kappa}.
    \end{align*}
\end{proof}

\section{Exact Asymptotics}     \label{sec4}

We assume in this Section that both Assumption \ref{Ass_for_theorem_I} and Assumption \ref{Ass_for_theorem_II} hold, as well as $\eta_x\in B^*$ for all critical points $\mathcal{T}_h$ and w.l.o.g. that $F(x_-)=0$. We further recall the definition of $A_\pm$ as the connected component of $[F\leq h]\setminus \mathcal{T}_h$ containing $x_\pm$, and introduce for $x\in \mathcal{T}_h$ the half-spaces
\begin{align*}
   H_1^\epsilon(x): & =[\langle \xi,\cdot \rangle  <  \langle \xi,x\rangle - \epsilon],\\ 
   H_2^\epsilon(x): & =[\langle \xi,\cdot \rangle  >  \langle \xi,x\rangle + \epsilon].
\end{align*}
As is discussed below \eqref{eq:dist}, locally, the points in $H_i^0(x)\cap [F\leq h]$ are all in the same connected component, which is denoted by $A_{\sigma_i(x)}$.

In order to verify Theorem \ref{Th:precise_Asy}, we will construct a trial state that is piece-wise defined on the regions $B_\delta \big(A_{\pm}\setminus B_r(x)\big)$ and $B_r(x)$ respectively for $x\in \mathcal{T}_h$. In order for this piece-wise definition to be consistent, it will be imperative to have a precise understanding of the intersections $B_r(x)\cap B_\delta \big(A_{\pm}\setminus B_r(x)\big)$, which is the content of the subsequent Proposition \ref{Cor:epsilon_half_space}.
\begin{proposition}
\label{Cor:epsilon_half_space}
    Given $x\in \mathcal{T}_h$ let $\eta :=\eta_x$ be as in \eqref{Eq:def_of_eta_x}. For $\epsilon,r,\delta>0$ small enough, $\sigma\in \{\pm\}$ and $i\in \{1,2\}$ 
    \begin{align*}
        H_i^0(x)\cap B_r(x)\cap B_\delta \big(A_{\sigma}\setminus B_r(x)\big)=H_i^{\epsilon}(x)\cap B_r(x)\cap B_\delta \big(A_{\sigma}\setminus B_r(x)\big),
    \end{align*}
 and $ H_i^0(x)\cap B_r(x)\cap B_\delta \big(A_{\sigma}\setminus B_r(x)\big)=\emptyset$ in case $\sigma\neq \sigma_i(x,\eta)$.
\end{proposition}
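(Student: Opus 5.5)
The plan is to reduce both claims to a single geometric fact. Near the saddle $x$, the part of $B_r(x)$ lying within $B$-distance $\delta$ of $A_\sigma\setminus B_r(x)$ stays a definite distance away from the hyperplane $[\langle \eta,\cdot\rangle = \langle\eta,x\rangle]$, and it lies on the side $H_i^0(x)$ with $\sigma_i(x)=\sigma$. Throughout, $\xi$ in the definition of $H_i^\epsilon$ is read as $\eta=\eta_x\in B^*$, and $|\langle\eta,y\rangle|\le \|\eta\|_{B^*}\|y\|_B$. Consequently, a $\delta$-perturbation in $B$ changes $\langle\eta,\cdot\rangle$ by at most $\|\eta\|_{B^*}\delta$.

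\textbf{Step 1: a quantitative local separation on $H$.} Work on $[F\le h]\cap B_R(x)\cap H$ with $R$ from the local description below \eqref{eq:dist}. Expand by Taylor at $x$, using $U\in C^2(B)$ (so $D^2F$ is continuous in the $B$-topology along $H$), $\nabla F(x)=0$ and $F(x)=h$:
\[
F(x+w)=h+\tfrac12\langle w,\nabla^2F(x)w\rangle+o(\|w\|_H^2).
\]
Decompose $w=t\nu+w^\perp$ with $\langle\eta,w^\perp\rangle=0$. By the coercivity $\langle w^\perp,\nabla^2F(x)w^\perp\rangle\ge\delta'\|w^\perp\|^2$ stated after \eqref{Eq:def_of_eta_x}, the condition $F(x+w)\le h$ forces $\|w^\perp\|_H\le C|t|$, i.e.\ $|\langle\eta,w\rangle|\ge c\|w\|_H$, on a small $H$-ball. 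The difficulty is that the balls $B_r$ are taken in $B$, not in $H$. To pass from $H$-smallness to $B$-smallness, use the coercivity built into \eqref{Eq:h_J_lower_bound}, or more directly that $[F\le h]$ is bounded in $H$ and hence precompact in $B$. A compactness argument then gives the following: if $y_n\in[F\le h]$ and $\|y_n-x\|_B\to0$, then $y_n$ converges weakly in $H$ to $x$, and the lower semicontinuity of $\frac12\|\cdot\|_H^2$ together with $F(y_n)\le h=F(x)$ and the continuity of $U$ on $B$ upgrade this to strong $H$-convergence. Hence $B$-small neighbourhoods of $x$ in $[F\le h]$ are $H$-small, and Step 1 yields the following: for $y\in[F\le h]\cap B_{R'}(x)$,
\[
|\langle\eta,y-x\rangle|\ge \kappa(\|y-x\|_B)>0 \quad\text{whenever } \|y-x\|_B\ge \rho,
\]
for any fixed $\rho>0$, by the same compactness argument. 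In particular this holds on the annulus $r/2\le\|y-x\|_B\le r$.

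\textbf{Step 2: conclude.} A point $z\in B_r(x)\cap B_\delta(A_\sigma\setminus B_r(x))$ has a partner $y\in A_\sigma$ with $\|y-x\|_B\ge r$ and $\|z-y\|_B<\delta$, so that $r\le\|y-x\|_B<r+\delta\le R'$. By Step 1, $|\langle\eta,y-x\rangle|\ge\kappa_0>0$ uniformly on that annulus. Choose $\delta<\kappa_0/(2\|\eta\|_{B^*})$ and $\epsilon<\kappa_0/2$. Then $|\langle\eta,z-x\rangle|>\epsilon$ and $z$ has the same sign as $y$, which gives the first identity. For the second, the local branch description below \eqref{eq:dist} says that $y\in A_\sigma\cap B_R(x)$ with $\langle\eta,y-x\rangle$ of sign $i$ forces $\sigma=\sigma_i(x)$. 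Hence $z\in H_i^0$ forces $\sigma=\sigma_i(x)$, and the intersection is empty otherwise.

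\textbf{Main obstacle.} I expect the main obstacle to be the upgrade from $B$-closeness to $H$-closeness in Step 1. If the lower semicontinuity argument fails to give strong convergence, I would instead argue directly by contradiction with sequences on the annulus. Precompactness in $B$ yields a limit $y_*\in[F\le h]$ with $\langle\eta,y_*-x\rangle=0$ and $\|y_*-x\|_B\in[r,r+\delta]$. This contradicts the local separation of branches (the orthogonal complement of $\eta$ meets $[q\le0]$ trivially), propagated to the nonlinear set for $r$ small.
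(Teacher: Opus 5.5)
Your proposal is correct and follows essentially the same route as the paper. A local bound $F(y)-F(x)\ge -c_1\langle\eta,y-x\rangle^2+c_2\|y-x\|_H^2$ on $[F\le h]$ near $x$ forces $|\langle\eta,y-x\rangle|\ge c\,r$ on the points of $A_\sigma$ with $r\le\|y-x\|_B<r+\delta$; continuity of $\langle\eta,\cdot\rangle$ on $B$ (since $\eta\in B^*$) carries this to the $\delta$-neighbourhood; and the local branch description gives the emptiness claim.

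The one difference is your detour upgrading $B$-closeness to $H$-closeness. It is valid, but not needed. For $U\in C^2(B)$ the Taylor remainder is bounded by $\sup_{s\in[0,1]}\|D^2U(x+sw)-D^2U(x)\|\,\|w\|_B^2$, which is already $o(\|w\|_H^2)$ as $\|w\|_B\to 0$. So the quadratic bound holds directly on small $B$-balls, which is what the paper implicitly uses.
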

\begin{proof}
  We first show that, depending on $r,\delta>0$ small enough, we can choose $\epsilon_0>0$ such that for $0<\epsilon<\epsilon_0$
\begin{align}
\label{Eq:boost_by_epsilon}
    H_i^0(x)\cap B_r(x)\cap B_\delta \big(A_{\sigma}\setminus B_r(x)\big)=H_i^\epsilon(x)\cap B_r(x)\cap B_\delta \big(A_{\sigma}\setminus B_r(x)\big).
\end{align}
For this purpose we note that, in case $r+\delta>0$ is small enough, there exist  $c_1,c_2>0$, such that for any element in $y\in [F\leq h]\cap B_{r+\delta}(x)$
\begin{align*}
    - c_1|\langle \eta,y-x\rangle_H|^2+c_2 \|y-x\|_H^2\leq F(y)-F(x)\leq 0,
\end{align*}
and in particular $g(y):=|\langle \eta,y-x\rangle|$ satisfies
\begin{align*}
    \inf \Big\{|g(y)|:y\in \left(A_{\sigma}\setminus B_r(x)\right)\cap B_{r+\delta}(x)\Big\}\geq \sqrt{c_2/c_1}r>0.
\end{align*}
Since $\eta\in B^*\subseteq H$, it is clear that $g$ is a continuous function, and hence $g$ must be uniformly positive on $B_\delta \big(A_{\sigma}\setminus B_r(x)\big)\cap B_r(x)$ as well for $\delta>0$ small enough. 

In order to verify the final claim, we observe for $\sigma\neq \sigma_i(x)$
\begin{align*}
    H_i^0(x)\cap B_{r+\delta}(x)\cap A_\sigma=\emptyset
\end{align*}
by the property stated below \eqref{eq:dist}. Using \eqref{Eq:boost_by_epsilon} and the continuity of $g$, this argument can be extended to show that $H_i^0(x)\cap B_r(x)\cap B_\delta \big(A_{\sigma}\setminus B_r(x)\big)$ is empty as well.
\end{proof}

In order to avoid issues that arise due to our Banach space being possibly infinite dimensional, we will further use the subsequent auxiliary Proposition \ref{Prop:Reduction_to_finite}. We recall the enumeration in, and below, \eqref{Eq:enummeration_crit} of the critical points and of the corresponding elements $\{\eta_1,\dots ,\eta_m\}$.

\begin{proposition}
    \label{Prop:Reduction_to_finite}
Given $E,\rho>0$, we find a subspace $\mathcal{V}\subseteq B^*\cap A^{-1}(B^*)$ of finite dimension, containing $\{\eta_1,\dots ,\eta_m\}$, such that the corresponding orthogonal projection $\pi:B\longrightarrow H$ satisfies
    \begin{align*}
        F(x) & \geq F(\pi x)-\rho,
    \end{align*}
    as well as $\|x-\pi x\|_B  < \rho$, for all $x$ with $F(x)\leq E$.
\end{proposition}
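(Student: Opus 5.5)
Since $F(x)\le E$ forces $\frac12\|x\|_H^2\le E-\inf U$, the sublevel set $K:=[F\le E]$ lies in a ball $K_R=\{\frac12\|y\|_H^2\le R\}$ of $H$. The plan is to combine compactness of $K_R$ in $B$ with the uniform continuity of $U$ on it, and to reduce the quadratic part to a Pythagoras argument. Concretely, $K_R$ is compact in $B$, because the embedding $H\hookrightarrow B$ is compact for an abstract Wiener space. The projection $\pi$ is taken $H$-orthogonal onto a finite-dimensional space $\mathcal V\subset B^*_H\cap A^{-1}(B^*_H)$, and extended to $B$ through the pairings. For an orthonormal basis $e_1,\dots,e_n$ of $\mathcal V$ with $e_k=(\ell_k)_H$, set
\[
\pi x:=\sum_{k=1}^n \ell_k(x)\,e_k .
\]
This is continuous $B\to H$ and coincides with the $H$-orthogonal projection on $H$.

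The first step is to choose $\mathcal V$ with the required properties. The space $\mathcal K_A$ is dense in $H$, and $\mathcal K_A\subset B_H^*\cap A^{-1}(B^*_H)$ up to the identification $A^{-1}$ versus $A$, which has to be checked against the intended definition. Hence there is an increasing sequence of finite-dimensional spaces $\mathcal V_n$ in this admissible set, containing $\eta_1,\dots,\eta_m$ (admissible by hypothesis), whose union is dense in $H$. The associated projections $\pi_n$ converge strongly to the identity on $H$. On the $H$-compact set... but $K_R$ is only $H$-bounded, so the next step needs care.

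The second step is $\sup_{x\in K}\|x-\pi_nx\|_B\to0$. For $x\in K_R$ we have
\[
\|x-\pi_nx\|_B\le C\,\|(I-\pi_n)x\|_{H}
\]
only in a weak sense, so instead I would use compactness of the embedding $i:H\to B$. Since $i$ is compact, $i(I-\pi_n)\to 0$ in operator norm: a compact operator composed with projections converging strongly to the identity converges in norm, because $(I-\pi_n)^*i^*=(I-\pi_n)i^*$ and $i^*$ is compact. This gives the uniform bound $\|x-\pi_n x\|_B\le \|i(I-\pi_n)\|\sqrt{2R}<\rho$ for large $n$.

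The third step is the energy inequality. For $x\in H$, Pythagoras gives $\|x\|_H^2\ge\|\pi x\|_H^2$, so
\[
F(x)-F(\pi x)\ge U(x)-U(\pi x).
\]
Both $x$ and $\pi x$ lie in $K_R$, which is $B$-compact, and $U$ is uniformly continuous on a compact $B$-neighbourhood of it. Hence $|U(x)-U(\pi x)|\le\rho$ once $\sup\|x-\pi x\|_B$ is small, by the second step. This yields $F(x)\ge F(\pi x)-\rho$.

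I expect the main obstacle to be the second step: the compactness of $i$, the norm convergence $i(I-\pi_n)\to0$, and checking that the continuous extension of $\pi$ to $B$ stays consistent with the orthogonal projection while $\mathcal V$ remains inside the admissible class.
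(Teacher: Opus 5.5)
Your proof is correct, but for the energy inequality it takes a different route from the paper.

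\textbf{The paper's route.} The paper proves the two claims separately.
\begin{itemize}
\item For $F(x)\ge F(\pi x)-\rho$ it expands $U$ to first order along the segment from $\pi y$ to $y$. Besides the quadratic gain coming from $\|y-\pi y\|_H^2$, only $\langle y-\pi y,(1-\pi)\nabla U\rangle_H$ then has to be controlled. It chooses $\mathcal V$, via a finite cover of the $B$-compact sublevel set, so that $\|(1-\pi)\nabla U\|_H<\epsilon$ there.
\item The $B$-closeness is proved by a second covering, using norming functionals $b_j$ at the centres $y_j$ with $\|(1-\pi)b_j\|_H$ small. This is a hands-on version of your operator-norm statement $\|i(I-\pi_n)\|_{H\to B}=\|(I-\pi_n)i^*\|\to0$.
\end{itemize}

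\textbf{Your route.} You reverse the logic. You first get $\sup_{\|x\|_H\le\sqrt{2R}}\|x-\pi x\|_B\to0$ and then obtain the energy bound for free. Pythagoras disposes of the quadratic part, and $x$ and $\pi x$ both lie in the $B$-compact set $K_R$, on which $U$ is uniformly continuous.

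\textbf{What each buys.} Your argument uses only $U\in C(B)$ and needs no adaptation of $\mathcal V$ to $\nabla U$. It also works for all large $n$ along any increasing admissible sequence with dense union. The paper's expansion additionally retains a transverse coercive term, but neither the statement nor its later use needs it.

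\textbf{Two cosmetic points.}
\begin{itemize}
\item An infinite-dimensional $B$ has no compact neighbourhoods. You do not need one: uniform continuity of $U$ on $K_R$ itself suffices, because both points lie in $K_R$.
\item Your hedge about $A^{-1}$ versus $A$ can be dropped. Reading $A^{-1}(B^*)$ as the image $\{A^{-1}\ell\}$ gives exactly $B^*_H\cap A^{-1}(B^*_H)=\mathcal K_A$. This set is dense by the standing assumption and contains the $\eta_j$ by hypothesis.
\end{itemize}
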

\begin{proof}
   According to our assumptions we have that $\eta_j\in B^*\cap A^{-1}(B^*)$ and that $B^*\cap A^{-1}(B^*)$ is dense in $H$, and hence we find for any $y\in H$ and $\tau>0$ a subspace $\mathcal{V}\subseteq B^*\cap A^{-1}(B^*)$ containing $\{\eta_1,\dots ,\eta_m\}$, such that $\|(1-\pi_\mathcal{V})y\|_H<\tau$, where $\pi_\mathcal{V}$ is the corresponding projection. Given $\epsilon>0$, we find for any $y\in B$ such a subspace $\mathcal{V}(y)$ with $\|(1-\pi_{\mathcal{V}(y)}) \nabla U(y)\|_H< \epsilon$, and by continuity this property can be extended to a $B$-open neighborhood $ O_y$ containing $y$. We note that for $y\in [F\leq E]$, subspace $\mathcal{V}$ and $0\leq t\leq 1$, and $C$ large enough,
   \begin{align*}
      \| \pi_{\mathcal{V}} y+t(y-\pi_{\mathcal{V}} y)\|_H^2\leq \|y\|_H^2\leq 2F(y)+C\leq 2E+C.
   \end{align*}
   Taking a finite cover $O_{y_1},\dots ,O_{y_k}$ of the $B$-compact set $[F\leq 2E+C]$ and defining $\mathcal{V}_0:=\cup_{j=1}^k \mathcal{V}(y_j)$, we obtain for $\mathcal{V}$ with $\mathcal{V}_0\subseteq \mathcal{V}$ and any $y$ with $F(y)\leq E$
    \begin{align*}
        F(y) & =F(\pi_{\mathcal{V}} y)+\|y-\pi_{\mathcal{V}} y\|_H^2+\int_{0}^1  \big\langle y-\pi_{\mathcal{V}} y,\nabla U\big(\pi_{\mathcal{V}} y+t(y-\pi_{\mathcal{V}} y)\big)\big\rangle \mathrm{d}t\\
        & \geq F(\pi_{\mathcal{V}} y)+\|y-\pi_{\mathcal{V}} y\|_H^2-\epsilon \|y-\pi_{\mathcal{V}} y\|_H\\
        &\geq F(\pi_{\mathcal{V}} y)-\frac{\epsilon^2}{4}.
    \end{align*}
    Regarding the second property, we take again a finite cover of $[F\leq 2E+C]$ by $\bigcup_{j=1}^k B_{\rho/3}(y_j)$ and we denote with $b_{1},\dots b_k$ elements in $B^*\subseteq H$ with $\|b_j\|_{B^*}=1$ such that $\|y_j\|_B=b_j(y_j)$. By taking $\mathcal{V}$ possibly larger, we can assume 
    \begin{align*}
        \|(1-\pi_{\mathcal{V}})b_j\|_H\leq \frac{\rho}{3(2E+C)}.
    \end{align*}
    For any $y\in [F\leq E]$ we therefore have $y-\pi_\mathcal{V}y\in B_{\rho/3}(y_j)$ for some $j\in \{1,\dots ,m\}$, and we obtain together with $\|y\|_H^2\leq 2F(y)+C$ that
    \begin{align*}
        \|y_j\|_B & =|b_j(y_j)|\leq |b_j(y-\pi_\mathcal{V}y)|+\frac{\rho}{3}\leq \frac{\rho}{3(2E+C)} \|y-\pi_\mathcal{V}y\|_H+\frac{\rho}{3}\\
        & \leq \frac{\rho}{3(2E+C)} (F(y)+C)+\frac{\rho}{3}\leq \frac{2\rho}{3}.
    \end{align*}
    Consequently, $\|y-\pi_\mathcal{V}y\|_B< \|y_j\|_B+\frac{\rho}{3}\leq \rho$.
\end{proof}

We are now in a position to construct good approximate eigenstates. Let $r,\delta$ and $\epsilon$ be as in Proposition \ref{Cor:epsilon_half_space}, where we can assume by \eqref{eq:dist} w.l.o.g. that $r>0$ is small enough such that $x_\pm\in A_\pm\setminus B_r(\mathcal{T}_h)$ and $\delta>0$ is small enough such that the sets 
\begin{align*}
    \mathcal{A}_\pm := B_\delta\! \left(A_\pm\setminus B_r(\mathcal{T}_h)\right)
\end{align*}
 are disjoint (such a $\delta>0$ exists by \eqref{eq:dist}) and that $r>0$ is small enough such that also the sets $B_r(x_j)$ are disjoint for $j\in \{1,\dots,m\}$. We furthermore note that the open set
\begin{align*}
    \mathcal{O}_1:=\mathcal{A}_-\cup \mathcal{A}_+\cup B_r(\mathcal{T}_h)
\end{align*}
contains the compact set $[F\leq h]$, and hence there exists an open set $\mathcal{O}_2$ such that
\begin{align*}
    [F\leq h]\subseteq \mathcal{O}_2\subseteq \overline{\mathcal{O}_2}^B\subseteq \mathcal{O}_1.
\end{align*}
In particular, $\inf_{H\setminus \mathcal{O}_2}F=\min_{H\setminus \mathcal{O}_2}F>h+2\kappa$, for $\kappa$ small enough, and we choose a projection $\pi$ as in Proposition \ref{Prop:Reduction_to_finite} with $E:=h+\kappa$ and
\begin{align*}
    \rho:=\min\{\delta,r,\kappa\}.
\end{align*}
Since the embedding $\zeta:\mathrm{ran}\pi\longrightarrow B$ with $\zeta(y):=y$ is continuous, we have that $\zeta^{-1}(\mathcal{O}_1)$ and $\zeta^{-1}(\mathcal{O}_2)$ are open and
\begin{align*}
    \zeta^{-1}(\mathcal{O}_2)\subseteq \overline{\zeta^{-1}(\mathcal{O}_2)}^B\subseteq \zeta^{-1}(\mathcal{O}_1),
\end{align*}
and hence we find a $C^\infty$ function $\varphi$ on the finite dimensional subspace $\mathrm{ran}\pi$ such that $\varphi|_{\zeta^{-1}(\mathcal{O}_2)}=1$ and $\mathrm{supp}\varphi\subseteq \zeta^{-1}(\mathcal{O}_1)$. In particular
\begin{align*}
    \mathrm{supp}(\varphi\circ \pi) & \subseteq \pi^{-1}\mathcal{O}_1.
\end{align*}
Finally, let $\mathcal{T}_h=\{x_1,\dots ,x_m\}$ and let us introduce for $x_j\in \mathcal{T}_h$ the function $f_j:\mathbb R\longrightarrow [-1,1]$ as $f_j:=1$ in case $\sigma_1(x_j)=\sigma_2(x_j)=+$, $f_j:=-1$ in case $\sigma_1(x_j)=\sigma_2(x_j)=-$, and for $\sigma_1(x_j)=-$ and $\sigma_2(x_j)=+$ we define
\begin{align*}
    f_{\beta,j}(t):= \begin{cases}
       z(\epsilon,\beta)^{-1}\int_0^t e^{- \beta \alpha_j s^2/2-\frac{\chi(\epsilon^{-1} s)}{\epsilon^2 - s^2}} \mathrm{d}s, \text{ in case }|t|<\epsilon\\
       -1, \text{ in case }t\leq -\epsilon,\\
       1, \text{ in case }t\geq \epsilon,
    \end{cases} 
\end{align*}
where $z(\epsilon,\beta):=\int_0^\epsilon e^{- \beta \alpha_j s^2/2-\frac{\chi(\epsilon^{-1} s)}{\epsilon^2 - s^2}} \mathrm{d}s$, $\chi$ is a smooth function with $\chi(t)=0$ in a neighborhood of $t=0$ and $\chi(t)=1$ for $|t|>1/2$, and $-\alpha_j$ is the negative eigenvalue corresponding to $\eta_j$. Analogously, we define $f_j$ in case $\sigma_1(x_j)=+$ and $\sigma_2(x_j)=-$. We note that for a suitable constant $C$
\begin{align}
\label{eq:small_z_asy}
   \left|z(\epsilon,\beta)-\sqrt{\frac{\pi}{2\beta \alpha_j}}\right|\leq C/\beta.
\end{align}

In the subsequent Theorem \ref{theorem:same_hight} we show that the trial state
    \begin{align}
    \label{Eq:def_main_trial_state}
        \Psi_\beta(x):=\varphi(\pi x)\times \begin{cases}
            1, \text{ for }\pi x\in \mathcal{A}_+\\
            -1, \text{ for }\pi x\in \mathcal{A}_-\\
            f_{\beta,j}(\langle \eta_j,x-x_j \rangle), \text{ for }\pi x\in B_r(x_j)\text{ with }j\in \{1,\dots,m\}\\
            0, \text{ for }\pi x\notin \mathcal{O}_1
        \end{cases}
    \end{align}
    is a well-defined element in the domain of the operator $\mathcal{L}_\beta$ introduced in \eqref{Eq:Def_L_Operator} and we provide precise control on its energy 
\begin{align*}
    \mathcal{E}_\beta[\Psi_\beta]= \frac{1}{\beta}\|A^{1/2}\nabla \Psi_\beta\|_{L^2(\mu_\beta)}^2=\langle \Psi_\beta,\mathcal{L}_\beta\Psi_\beta\rangle,
\end{align*}
as well as on its second moment $\|\mathcal{L}_\beta \Psi_\beta\|_{L^2(\mu_\beta)}^2$.
To this end we shall use the following sharp Laplace asymptotics. 
\begin{proposition}
\label{Prop:Z_beta_asy}   Under Assumption~\ref{Ass_for_theorem_II}
for $\tau>0$ small enough
    \begin{align}
    \label{Eq:Control_local_masses}
        \int_{B_\tau(x_\pm)}e^{-\beta U}\mathrm{d}\gamma_\beta=e^{-\beta F(x_\pm)}C_\pm \left(1+O_{\beta\rightarrow \infty}\left(1/\sqrt{\beta}\right)\right).
    \end{align}
    Furthermore, we have for $\beta$ large enough and a suitable constant $C>0$
    \begin{itemize}
        \item in the case $F(x_-)=F(x_+)=0$, that 
         \begin{align}
         \label{Eq:estimate_C_-_C_+}
        \left|Z_\beta - \left(C_-+C_+\right)\right|\leq C/\sqrt{\beta},
    \end{align}
    \item in the case $0=F(x_-)<F(x_+)$
   \begin{align*}
       \left|Z_{\beta} - C_-\right|\leq C/\sqrt{\beta}.
   \end{align*}
    \end{itemize}
   In both cases $Z_\beta,Z_{\beta} \geq 1/2$ for $\beta$ large enough.
\end{proposition}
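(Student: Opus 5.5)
The plan is to prove the local Laplace asymptotic \eqref{Eq:Control_local_masses} first and then obtain the statements on $Z_\beta$ by splitting $B$ into neighbourhoods of $x_\pm$ and their complement. Since $x_\pm\in\mathcal S_0$, we have $\nabla^2F(y)\ge c>0$ on $B_{r_x}(x_\pm)\cap H$ by \eqref{Eq:Open_Convex}. Fix $\tau<r_x$ and shift the Gaussian measure by the Cameron--Martin vector $x_\pm$. The Cameron--Martin formula for $\gamma_\beta$ gives
\[
\int_{B_\tau(x_\pm)}e^{-\beta U}\,d\gamma_\beta=\int_{B_\tau(0)}e^{-\beta[U(x_\pm+y)+\langle x_\pm,y\rangle+\frac12\|x_\pm\|_H^2]}\,d\gamma_\beta(y),
\]
where $\langle x_\pm,y\rangle$ is the Paley--Wiener integral. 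Since $\nabla F(x_\pm)=0$, we have $DU(x_\pm)=-\langle x_\pm,\cdot\rangle$ as a measurable linear functional, so the exponent equals $-\beta F(x_\pm)-\beta R(y)$ with $R(y)=U(x_\pm+y)-U(x_\pm)-DU(x_\pm)[y]$. Taylor's formula for $U\in C^3(B)$ gives $R(y)=\frac12 D^2U(x_\pm)[y,y]+O(\|y\|_B^3)$ on $B_\tau(0)$.

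Next rescale $y=\beta^{-1/2}w$, so that $\gamma_\beta$ becomes $\gamma$. The integrand becomes $e^{-\frac12 D^2U(x_\pm)[w,w]+O(\beta^{-1/2}\|w\|_B^3)}$ on $\{\|w\|_B<\tau\sqrt\beta\}$. The Gaussian integral $\int_B e^{-\frac12\langle w,\nabla^2U(x_\pm)w\rangle}\,d\gamma=\det[\mathbf 1+\nabla^2U(x_\pm)]^{-1/2}=C_\pm$ is the standard formula for trace-class quadratic perturbations. It holds because $\nabla^2U(x_\pm)$ is trace class by Goodman's theorem and $\mathbf 1+\nabla^2U(x_\pm)>0$.

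To control the error I use the inequality $|e^{a}-1|\le|a|e^{|a|}$ together with the Fernique integrability of $\|w\|_B^3$. The remaining issue is that $e^{|a|}$ need not be integrable uniformly in $\beta$. I would handle this through convexity. On $B_\tau(0)$ we have $R(y)\ge \frac{c'}{2}\|y\|_H^2-\frac12\|y\|_H^2$ in $H$-directions, so the integrand is dominated by a Gaussian density of a measure with covariance slightly larger than the reference one, after choosing $\tau$ small. This uses the lower bound $\nabla^2F\ge c$ near $x_\pm$ and the continuity of $D^2U$ on $B$. The tail $\{\|w\|_B\ge\tau\sqrt\beta\}$ contributes $O(e^{-c\beta})$ by Fernique's theorem. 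Together these give the relative error $O(\beta^{-1/2})$; I expect this domination step to be the main technical obstacle. A cleaner alternative for it would be to prove a log-concave comparison via the convexity of $\frac{1-c}{2}\|\cdot\|_H^2+U$, as in Proposition~\ref{Prop:Min}. The cubic term has odd symmetry and would even give $O(\beta^{-1})$, but $O(\beta^{-1/2})$ suffices.

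For $Z_\beta$, write $Z_\beta=\int_{B_\tau(x_-)}+\int_{B_\tau(x_+)}+\int_{C}$ with $C=B\setminus(B_\tau(x_-)\cup B_\tau(x_+))$. The set $C$ is closed, so Proposition~\ref{Prop:4} applies with $\sigma_\beta=\gamma_\beta$, $h_\beta=U$ and $J=\frac12\|\cdot\|_H^2$. The hypotheses hold because $U$ is bounded below and continuous. It gives
\[
\varlimsup_{\beta\to\infty}\frac1\beta\log\int_C e^{-\beta U}\,d\gamma_\beta\le-\inf_{C\cap H}F .
\]
Since the minimizers of $F$ are local minima and $[F\le E]$ is $B$-compact, the only global minimizers lie among $x_\pm$. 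Hence $\inf_{C\cap H}F>\min F(x_\pm)=0$, and the contribution of $C$ is exponentially small. In the case $F(x_+)>0$, the $x_+$ term is $O(e^{-\beta F(x_+)})$. Combining these with \eqref{Eq:Control_local_masses} gives both estimates on $Z_\beta$. Finally $Z_\beta\ge 1/2$ for large $\beta$, because the constants $C_\pm\ge$ \ldots{} need only be positive; if the normalisation requires it, the same argument gives $Z_\beta\ge C_-/2$.
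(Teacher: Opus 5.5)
\textbf{Where the proposal matches the paper.} Your route is the paper's: Cameron--Martin shift by $x_\pm$, then a third-order Taylor expansion. The paper names the cubic remainder $U_\pm$ and absorbs the quadratic part into the Gaussian $\gamma_{\beta,\pm}$ of mass $C_\pm$. The error is then controlled by $|e^a-1|\le|a|e^{|a|}$ with Fernique, plus a Gaussian tail estimate. Finally, a large-deviation bound handles $B\setminus(B_\tau(x_-)\cup B_\tau(x_+))$. Your treatment of this complement is fine: you check the hypotheses of Proposition~\ref{Prop:4}, and compactness of sublevel sets gives $\inf_{C\cap H}F>0$.

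\textbf{The gap.} It sits in the step you flag as the main obstacle, and the fix you suggest does not work. The bound $R(y)\ge\frac{c'-1}{2}\|y\|_H^2$ holds only for $y\in H$, which is a $\gamma$-null set. It therefore gives no pointwise control of $e^{-\beta R}$ on $B$. Moreover, in infinite dimensions a Gaussian whose covariance is a rescaling of that of $\gamma$ is mutually singular with $\gamma$ (Feldman--H\'ajek), so there is no density to dominate by.

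No convexity is needed. On $\{\|w\|_B<\tau\sqrt\beta\}$ you already have $|a|\le C\beta^{-1/2}\|w\|_B^3$, hence, uniformly in $\beta$,
\[
|a|\le C\tau\|w\|_B^2,\qquad e^{|a|}\le e^{q\|w\|_B^2},\qquad q:=C\tau .
\]
Choose $\tau$ small enough that $\|w\|_B^3e^{q\|w\|_B^2}\in L^1(\gamma_\pm)$, which Fernique's theorem allows. Then
\[
\int_{\{\|w\|_B<\tau\sqrt\beta\}}e^{-\frac12D^2U(x_\pm)[w,w]}\,|e^{a}-1|\,d\gamma\le C_\pm\,C\beta^{-1/2}\int_B\|w\|_B^3e^{q\|w\|_B^2}\,d\gamma_\pm=O(\beta^{-1/2}).
\]
This is exactly the paper's estimate $|e^{-\beta U_\pm}-1|\le C\beta\|x\|_B^3e^{q\beta\|x\|_B^2}$ on $B_\tau(0)$, written after rescaling. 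The only property of the minimum that enters is $\mathbf 1+\nabla^2U(x_\pm)>0$. This makes $\gamma_\pm$ a genuine Gaussian measure with $C_\pm<\infty$; no convexity on a neighbourhood of $x_\pm$ is used.

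\textbf{Two minor points.} First, your aside that odd symmetry gives $O(\beta^{-1})$ requires $U\in C^4$; compare Remark~(a) after Corollary~\ref{Cor:AC_result}. With $U\in C^3$, the symmetry argument only yields $o(\beta^{-1/2})$.

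Second, your unfinished last sentence should read $Z_\beta\ge C_-/2$ for large $\beta$, which is what the asymptotics give. The literal bound $Z_\beta\ge 1/2$ would require $C_->1/2$, respectively $C_-+C_+>1/2$. This is not automatic: in the Allen--Cahn example, $\det\nabla^2F(x_\pm)\to\infty$ as $\ell\to\infty$. The paper's proof does not justify that constant either. Only a positive lower bound on $Z_\beta$ is used later.
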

\begin{proof}
    Let us define the modified potential
    \begin{align*}
        U_\pm (x):=U(x+x_\pm ) - F(x_\pm) - \frac{1}{2}\langle x,\nabla^2   U(x_\pm)x \rangle  +  \langle x_\pm,x \rangle+\frac{1}{2}\|x_\pm\|^2_H,
    \end{align*}
    which satisfies $U_\pm(0)=0$, $DU_\pm(0)=0$ and $D^2 U_\pm (0)=0$, and hence we can find for all $q>0$ constants $\tau,C>0$ such that 
    \begin{align*}
        \left|e^{-\beta U_\pm (x)}-1\right|\leq \beta C\|x\|_B^3 e^{q\beta \|x\|_B^2}
    \end{align*}
for all $x\in B_\tau(0)$. We choose $q$ small enough such that $\|x\|_B^m e^{q\|x\|^2}\in L^1(\mathrm{d}\gamma_{\pm})$, and compute by applying the shift $x\mapsto x+x_\pm$
\begin{align*}
    \int_{B_\tau(x_\pm)}e^{-\beta U}\mathrm{d}\gamma_\beta=e^{-\beta F(x_\pm)}C_\pm \int_{B_\tau(0)}e^{-\beta U_\pm}\mathrm{d}\gamma_{\beta,\pm}.
\end{align*}
Consequently, we obtain for a suitable $C>0$
    \begin{align*}
      &  \ \ \ \ \  \left| \int_{B_\tau(x_\pm)}e^{-\beta U}\mathrm{d}\gamma_\beta-e^{-\beta F(x_\pm)}C_\pm \right|\\
      &  \leq e^{-\beta F(x_\pm)} C_\pm \int_{B_\tau(0)}|e^{-\beta U_\pm}-1|\mathrm{d}\gamma_{\beta,\pm}+e^{-\beta F(x_\pm)}\int_{B\setminus B_\tau(0)}\mathrm{d}\gamma_\beta\\
       &  \leq e^{-\beta F(x_\pm)}C_\pm C\beta^{-\frac{1}{2}}\int \|x\|_B^3 e^{q\|x\|_B^2}\mathrm{d}\gamma_{\pm}+e^{-\beta F(x_\pm)}e^{-q\beta \tau^2}\int  e^{q\|x\|_B^2}\mathrm{d}\gamma_{\pm}\\
       & \leq e^{-\beta F(x_\pm)}C /\sqrt{\beta},
    \end{align*}
    which concludes the proof of \eqref{Eq:Control_local_masses}. For \eqref{Eq:estimate_C_-_C_+} we note $\mathcal{C}_*:=B\setminus \left(B_\tau(x_-)\cup B_\tau(x_+)\right)$ is contained in $[F\geq u]$ for some $u>0$, and hence we have for $\beta$ large enough by the LDP
    \begin{align*}
        \int_{\mathcal{C}_*}e^{-\beta U}\mathrm{d}\gamma_\beta\leq e^{-\beta u/2},
    \end{align*}
    which concludes the proof together with \eqref{Eq:Control_local_masses}. The case $F(x_+)>0$, can be treated analogously.
\end{proof}

\begin{theorem}
\label{theorem:same_hight}
    Let us assume $U\in C^3(B)$ with $F(x_-)=F(x_+)=0$. The trial state $\Psi_\beta$ defined in \eqref{Eq:def_main_trial_state} is a well-defined element of $\operatorname{dom}(\mathcal{L}_\beta)$
   and there exists a constant $C>0$ such that for $\beta$ large enough, and $r,\kappa$ small enough, we have 
    \begin{align*}
         \|\mathcal{L}_\beta \Psi_\beta\|_{L^2(\mu_\beta)}^2  \leq  C e^{-\beta h}/\beta
    \end{align*}
 as well as
   \begin{align*}
         \frac{1}{\beta}\|A^{1/2}\nabla \Psi_\beta\|_{L^2(\mu_\beta)}^2= \left(1  +  O\! \left(\frac{1}{\sqrt{\beta}}\right)\right)\sum_{j=1}^{m_0} \frac{2C_j \alpha_j e^{-\beta h}}{\pi(C_-  +  C_+)} ,
   \end{align*}
Furthermore, there exists a $\tau>0$ small enough, such that $\Psi_\beta(x)=\pm 1$ for $x\in B_\tau(x_\pm)$.
\end{theorem}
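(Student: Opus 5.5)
Since $\Psi_\beta$ depends on $x$ only through the finite-dimensional projection $\pi x\in\operatorname{ran}\pi$, with $\operatorname{ran}\pi$ spanned by elements of $B^*\cap A^{-1}(B^*)$, it is a smooth cylindrical function. Its defining pieces are $\varphi$ and, near $x_j$, the one-dimensional profile $f_{\beta,j}$ of the single coordinate $\langle\eta_j,\cdot\rangle$.

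\textbf{Well-definedness and membership in $\operatorname{dom}(\mathcal L_\beta)$.} Consistency of the piecewise definition \eqref{Eq:def_main_trial_state} on overlaps comes from Proposition~\ref{Cor:epsilon_half_space}. On $B_r(x_j)\cap\mathcal A_\sigma$ we have $|\langle\eta_j,\cdot-x_j\rangle|\ge\epsilon$ with the correct sign, so $f_{\beta,j}=\pm1$ agrees with the value prescribed on $\mathcal A_\pm$. Since $\rho\le\min\{\delta,r\}$, Proposition~\ref{Prop:Reduction_to_finite} lets us pass from $x$ to $\pi x$ without leaving these neighbourhoods. The cutoff $\chi(\epsilon^{-1}s)/(\epsilon^2-s^2)$ makes $f_{\beta,j}$ flat at $\pm\epsilon$, so $\Psi_\beta$ is $C^\infty$ in $\pi x$. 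Hence it is in $\mathcal F_AC_b^\infty(B)$ up to the constant-at-infinity issue, which is absent because $\varphi$ has compact support in the finite-dimensional range. It therefore lies in $\operatorname{dom}(\mathcal L_\beta)$ with $\mathcal L_\beta\Psi_\beta$ given pointwise by \eqref{Eq:Def_L_Operator}. For $\tau<\rho$, the ball $B_\tau(x_\pm)$ is mapped by $\pi$ into $\mathcal A_\pm\cap\zeta^{-1}(\mathcal O_2)$, where $\varphi=1$; this gives $\Psi_\beta=\pm1$ there.

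\textbf{Energy asymptotics.} Split $B$ into three regions:
\begin{itemize}
\item[(a)] $\{\varphi\circ\pi\neq 1\}$. Here $\pi x\notin\mathcal O_2$, so $F(\pi x)>h+2\kappa$, and Proposition~\ref{Prop:Reduction_to_finite} gives $F(x)\ge h+\kappa$ whenever $F(x)\le E$. By the LDP (Propositions~\ref{Prop:4}, \ref{Prop:Z_beta_asy}) the $\mu_\beta$-mass of this region is $O(e^{-\beta(h+\kappa/2)})$. Since the gradients there are bounded by $C\beta^{1/2}$, its contribution is negligible.
\item[(b)] $\mathcal A_\pm$ with $\varphi\circ\pi=1$. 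The gradient vanishes there.
\item[(c)] The saddle balls $B_r(x_j)$, $j\le m_0$ (for $j>m_0$ the function is constant). Here $\nabla\Psi_\beta=f_{\beta,j}'(\langle\eta_j,x-x_j\rangle)\,\eta_j$, and $\langle A\eta_j,\eta_j\rangle=\langle\nu_j,A^{-1}\nu_j\rangle=1$.
\end{itemize}
So the main term is
\[
\frac{1}{\beta Z_\beta z^2}\int_{B_r(x_j)\cap\{|\langle\eta_j,\cdot-x_j\rangle|<\epsilon\}} e^{-\beta\alpha_j s^2-\cdots}\,e^{-\beta U}\,d\gamma_\beta,\qquad s=\langle\eta_j,x-x_j\rangle .
\]
Shift by $x_j$ and Taylor expand $U$ to third order, as in the proof of Proposition~\ref{Prop:Z_beta_asy}. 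The factor $e^{-\beta\alpha_j s^2}$ combined with the negative direction of $\nabla^2F(x_j)$ produces exactly the Gaussian $\gamma_{\beta,j}$ with normalization $C_j$: the quadratic form becomes $\nabla^2U(x_j)+2\alpha_jP_{\eta_j,\eta_j}$. The cubic remainder gives the relative $O(\beta^{-1/2})$ error. The integral is then $e^{-\beta h}C_j(1+O(\beta^{-1/2}))$. With $z^{-2}=\frac{2\beta\alpha_j}{\pi}(1+O(\beta^{-1/2}))$ from \eqref{eq:small_z_asy} and $Z_\beta=C_-+C_+ +O(\beta^{-1/2})$, this yields $\frac{2C_j\alpha_j}{\pi(C_-+C_+)}e^{-\beta h}$. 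The truncation $\chi$ and the restriction to $B_r$ cost only exponentially small relative errors, since the Gaussian concentrates at scale $\beta^{-1/2}$.

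\textbf{Bound on $\|\mathcal L_\beta\Psi_\beta\|^2$; the main obstacle.} In the saddle region, write $g=f_{\beta,j}$, which solves $g''=-\beta\alpha_j s\,g'$ wherever $\chi=0$. Then
\[
\mathcal L_\beta\Psi_\beta=g'(s)\bigl[\beta\alpha_j s+\langle A\eta_j,x+\nabla U(x)\rangle\bigr],\qquad s=\langle\eta_j,x-x_j\rangle .
\]
Writing $x=x_j+y$ and $x_j+\nabla U(x_j)=0$, we get $\langle A\eta_j,x+\nabla U(x)\rangle=\langle A\eta_j,\nabla^2F(x_j)y\rangle+O(\|y\|^2)$ in the appropriate Gaussian sense. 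The eigenrelation \eqref{Eq:Q_Operator} gives $\nabla^2F(x_j)A\eta_j=-\alpha_j\eta_j$, so the linear term equals $-\alpha_j\beta s$ after accounting for the scaling, and it cancels the $\beta\alpha_j s$ term. The residual is $g'\cdot O(\beta\|y\|_B^2)$. Its square integrated against $e^{-\beta U}d\gamma_\beta$ is bounded by $z^{-2}\beta^2\cdot\beta^{-2}\cdot e^{-\beta h}\beta^{-1/2}$-type factors; the $\beta$-powers need care, but the target is $Ce^{-\beta h}/\beta$. Making this cancellation rigorous is the hard step. The operator $A$ is unbounded and $y\in B\setminus H$ typically, so the pairing $\langle A\eta_j,y\rangle$ must be read through $\eta_j\in\mathcal K_A$, and the $C^3$ remainder of $\nabla U$ must be controlled with Fernique-type bounds under $\gamma_{\beta,j}$. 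I would handle this via the Paley--Wiener representation used in the proof of Proposition~\ref{Prop:Saddle}. The contributions from the $\chi$-region near $|s|\approx\epsilon$ and from $\{\nabla\varphi\circ\pi\ne0\}$ are $O(e^{-\beta(h+c)})$ by the same LDP argument as in step (a).
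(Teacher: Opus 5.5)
You follow the paper's own route: the cylindrical representation through $\operatorname{ran}\pi$, consistency on overlaps from Proposition~\ref{Cor:epsilon_half_space}, a large-deviation bound where $\varphi\circ\pi\neq 1$, and the shift by $x_j$ that turns $e^{-\beta(U+\alpha_j s^2)}\,d\gamma_\beta$ into $C_j e^{-\beta U_j}\,d\gamma_{\beta,j}$. Your leading term for the energy matches the paper's.

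\textbf{The gap.} You do not establish $\|\mathcal L_\beta\Psi_\beta\|^2_{L^2(\mu_\beta)}\le Ce^{-\beta h}/\beta$, and its factor $\beta^{-1}$ is the quantitative heart of the theorem. It is what makes the Kato--Temple step in Theorem~\ref{Th:precise_Asy} work, since $\langle\omega_\beta,\mathcal L_\beta\omega_\beta\rangle$ is only of order $e^{-\beta h}$. Your formula for $\mathcal L_\beta$ on the saddle piece drops the prefactor $\beta^{-1}$ in front of the trace in \eqref{Eq:Def_L_Operator}. With $f=g(s)$ and $s=\langle\eta_j,x-x_j\rangle$, one has $\operatorname{tr}(A\nabla^2 f)=g''(s)\langle\eta_j,A\eta_j\rangle=g''(s)$. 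Hence $-\beta^{-1}g''=\alpha_j s\,g'$ where $\chi=0$, not $\beta\alpha_j s\,g'$. The linearization of the drift is $-\alpha_j s$, also with no $\beta$, so there is no ``scaling'' to account for: the cancellation is exact at order one. Your residual $g'\cdot O(\beta\|y\|_B^2)$ is therefore off by a factor $\beta$. Your own tentative count then gives a bound that grows relative to $e^{-\beta h}$ instead of decaying like $\beta^{-1}$, and you leave this unresolved.

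\textbf{How to close it.} The missing step is elementary and needs no Paley--Wiener integrals.
\begin{itemize}
\item Since $\eta_j\in\mathcal K_A$, we have $A\eta_j\in B_H^*$. So $G(x):=\langle A\eta_j,x\rangle+DU(x)[A\eta_j]$ is a $C^2$ function on $B$ (as $U\in C^3(B)$), with $G(x_j)=0$.
\item $DG(x_j)$ is a continuous functional on $B$ that equals $\langle\nabla^2F(x_j)A\eta_j,\cdot\rangle_H=-\alpha_j\langle\eta_j,\cdot\rangle$ on $H$. Since $\eta_j\in B_H^*$ and $H$ is dense in $B$, this identity holds on all of $B$.
\item Taylor's formula then gives $|G(x)+\alpha_j s|\le C\|x-x_j\|_B^2$ pointwise on $B_{r+\kappa}(x_j)$. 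Adding the bounded cutoff term, on the whole strip $|s|<\epsilon$
\[
|\mathcal L_\beta f_{\beta,j}|\le C\beta^{-1/2}\bigl(1+\beta\|x-x_j\|_B^2\bigr)e^{-\beta\alpha_j s^2/2}.
\]
\item This bound also handles the $\chi$-region. That region is not contained in $[F\ge h+c]$, since it reaches down into the wells, so your step (a) argument would not have covered it.
\item After the shift, $\gamma_{\beta,j}$ is the image of $\gamma_j$ under $y\mapsto\beta^{-1/2}y$, and $U_j\ge h-C(r+\kappa)\|y\|_B^2$. Fernique's theorem gives $\int(1+\beta\|y\|_B^2)^2e^{-\beta U_j}\,d\gamma_{\beta,j}\le Ce^{-\beta h}$, hence $Ce^{-\beta h}/\beta$ overall.
\end{itemize}

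\textbf{Two smaller points.}
\begin{itemize}
\item The saddle pieces are cylinders $\pi^{-1}B_r(x_j)$, not balls. Your bad set should be $\mathcal C=(\pi^{-1}\mathcal O_2)^c\cup\{\|x-\pi x\|_B\ge\kappa\}$. Proposition~\ref{Prop:Reduction_to_finite} places it in $[F\ge h+\kappa]$ and ensures $\pi^{-1}B_r(x_j)\setminus\mathcal C\subseteq B_{r+\kappa}(x_j)$, where the Taylor expansion is legitimate.
\item On $\mathcal C$, controlling $\mathcal L_\beta\Psi_\beta$ (as opposed to $\nabla\Psi_\beta$) needs the pointwise bound $|\mathcal L_\beta\Psi_\beta|\le C\beta^{1/2}e^{\tau\|x\|_B^2}$ from \eqref{eq:expo_control}. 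This must be combined with a Varadhan estimate tilted by $e^{\beta\theta\|x\|_B^2}$.
\end{itemize}
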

\begin{proof}
    First of all we note that $\Psi_\beta:B\longrightarrow \mathbb R$ is well-defined as all elements $x\in B$ belong to at least one case appearing in the definition of $\Psi_\beta$, and $f_{\beta,j}(\langle \eta_j,x-x_j \rangle)=1$ in the intersection $\mathcal{A}_+\cap B_r(x_j)$, see Proposition \ref{Cor:epsilon_half_space} as well as the definition of $f_{\beta,j}$, and analogously $f_{\beta,j}(\langle \eta_j,x-x_j \rangle)=-1$ in the intersection $\mathcal{A}_-\cap B_r(x_j)$, and all other cases in the definition of $\Psi_\beta$ are disjoint to each other. Furthermore, $\Psi_\beta$ is in the domain of $\mathcal{L}_\beta$ as we can use an orthonormal basis $b_1,\dots,b_d\subseteq B^*\subseteq H$ of $\mathrm{ran}(\pi)$ in order to write
    \begin{align*}
        \Psi_\beta(x)=G_\beta(b_1(x),\dots,b_d(x))
    \end{align*}
    using $G_\beta \in C^\infty_b(\mathbb R^d)$ defined with $Y(y):=y_1 b_1+\dots +y_d b_d$ for $y\in \mathbb R^d$ as
    \begin{align*}
         G_\beta(y_1,\dots ,y_d):=\varphi(Y(y))\times \begin{cases}
            1, \text{ for }y\in Y^{-1}(\mathcal{A}_+)\\
            -1, \text{ for }y\in Y^{-1}(\mathcal{A}_-)\\
            f_{\beta,j}(\langle \eta_j,Y(y)-x_j \rangle), \text{ for }y\in Y^{-1}( B_r(x_j))\\
            0, \text{ for }y\notin \mathrm{supp}(\varphi\circ Y).
        \end{cases}
    \end{align*}
   Observe that $G_\beta$ is indeed a smooth function, as the different cases appearing in the definition of $G_\beta$ are all described by open sets. Using that $A|_{\mathrm{ran}(\pi)}:\mathrm{ran}(\pi)\longrightarrow H$ is bounded we therefore obtain
   \begin{align*}
       \|A^{1/2}\nabla \Psi_\beta(x)\|_H & \leq \|A|_{\mathrm{ran}(\pi)}\|_\mathrm{op} \|\nabla \Psi_\beta(x)\|_H\\
       &= \|A|_{\mathrm{ran}(\pi)}\|_\mathrm{op}\left(\sum_{k=1}^d (\partial_k G_\beta)^2(b_1(x),\dots b_d(x))\right)^{1/2} \leq C\beta^{1/2}, 
   \end{align*}
   for $C$ large enough, where we have used that $\beta^{-\frac{1}{2}}\partial_k G_\beta$ is uniformly bounded for $\beta\geq 1$, see \eqref{eq:small_z_asy}. Analogously we have by \eqref{Eq:Def_L_Operator}
\begin{align*}
    \mathcal{L}_\beta\Psi_\beta(x) & =-\frac{1}{\beta}\sum_{k,\ell=1}^d \langle b_k,A b_\ell\rangle \partial_k \partial_\ell G_\beta\big(b_1(x),\dots b_d(x)\big)\\
    &\ +   \sum_{\ell=1}^d
\bigl(\langle A b_\ell,x\rangle
+\langle A b_\ell,\nabla U(x)\rangle_H\bigr)
\partial_\ell G_\beta(b_1(x),\dots,b_d(x)), 
\end{align*}
and we obtain for $C,\tau>0$ large enough
   \begin{align}
              \label{Eq:L_Psi_bound}
       |\mathcal{L}_\beta\Psi_\beta(x)| &\leq C\beta^{1/2}e^{\tau \|x\|_B^2},
   \end{align}
  where we have used the assumption in \eqref{eq:expo_control}. Furthermore, 
    \begin{align}
    \label{eq:nabla_trial}
    \nabla \Psi_\beta & =\chi_{\mathcal{C}}\nabla \Psi_\beta+\sum_{j=1}^m \chi_{(\pi^{-1}B_r(x_j))\setminus \mathcal{C}}\nabla f_{\beta,j}(\langle \eta_j,x-x_j \rangle), \\
    \label{eq:L_trial}
        \mathcal{L}_{\beta}\Psi_\beta & =\chi_{\mathcal{C}}\mathcal{L}_{\beta}\Psi_\beta+\sum_{j=1}^m \chi_{(\pi^{-1}B_r(x_j))\setminus \mathcal{C}}\mathcal{L}_{\beta}f_{\beta,j}(\langle \eta_j,x-x_j \rangle),
    \end{align}
    with the $B$-closed set $\mathcal{C}:=(\pi^{-1}\mathcal{O}_2)^c\cup \{x:\|x-\pi x\|_B\geq \kappa\}$, where we have used that $\varphi\circ \pi=1$ on $\mathcal{C}^c\subseteq \pi^{-1}\mathcal{O}_2$. Notably, Proposition \ref{Prop:Reduction_to_finite} together with our choice $\rho\leq \kappa$, tells us that $\mathcal{C}\subseteq [F\geq h+\kappa]$ and we further observe
    \begin{align*}
        B_{r'}(x_j)\subseteq (\pi^{-1}B_r(x_j))\setminus \mathcal{C}\subseteq B_{r+\kappa}(x_j),
    \end{align*}
    for a suitable $r'$, where we used Proposition \ref{Prop:Reduction_to_finite} and the fact that $x_j\in [F\leq h+\kappa]$ to show $\|x_j-\pi x_j\|_B<\min\{\delta,r,\kappa\}$ and therefore $x_j$ is an element of the open set $(\pi^{-1}B_r(x_j))\setminus \mathcal{C}$. W.l.o.g. we choose $r'$ small enough, such that $t:=\langle \eta_j,x-x_j \rangle$ satisfies $|t|<\epsilon$ for all $x\in B_{r'}(x_j)$. We compute explicitly, using $\langle \eta_j,A\eta_j\rangle_H=1$,
    \begin{align*}
       A^{1/2} \nabla f_{\beta,j}(\langle \eta_j,x-x_j \rangle) & = f_{\beta,j}'(t)A^{1/2}\eta_j, \\
        \mathcal{L}_{\beta}f_{\beta,j}(\langle \eta_j,x-x_j \rangle) & =  \partial_{A\eta_j}F(x)f_{\beta,j}'(t)-\frac{1}{\beta}f_{\beta,j}''(t).
    \end{align*}
    We note that $|\left(\frac{\chi(\epsilon^{-1}t)}{\epsilon^2 - t^2}\right)'e^{-\frac{\chi(\epsilon^{-1}t)}{\epsilon^2 -t^2}}|\leq C$ for $C$ large enough and $|t|<\epsilon$, and
    \begin{align*}
        \left|\partial_{A\eta_j}F(x)+ \alpha_j t\right|\leq C\|x-x_j\|^2_B
    \end{align*}
    for all $x\in B_{r+\kappa}(x_j)$, given $r+\kappa$ is small enough, and $C$ large enough, where we used $U\in C^3(B)$ as well as $\nabla^2 F(x_j)A\eta_j=-\alpha_j \eta_j$, see the definition of $\eta_j:=\eta_{x_j}$ in \eqref{Eq:def_of_eta_x}, to show
    \begin{align*}
        \langle \nabla \partial_{A\eta_j}F(x_j),x-x_j\rangle_H= \langle \nabla^2 F(x_j)A\eta_j,x-x_j\rangle_H=-\alpha_j \langle \eta_j,x-x_j\rangle=-\alpha_j t.
    \end{align*}
    Together with \eqref{eq:small_z_asy}, an explicit computation shows for $|t|<\epsilon$
    \begin{align*}
        \left|\mathcal{L}_{\beta}f_{\beta,j}(\langle \eta_j,x-x_j \rangle)\right| & = z(\epsilon,\beta)^{-1}\left|\partial_{A\eta_j}F(x)+\alpha_j t+\frac{\chi(\epsilon^{-1}t)}{\beta (\epsilon^2 - t^2)}\right|e^{-\beta \alpha_jt^2/2-\frac{\chi(\epsilon^{-1}t)}{\epsilon^2 -t^2}}\\
        & \leq C\beta^{-1/2} \left(1+\beta \|x-x_j\|_B^2\right)e^{-\beta \alpha_j t^2/2},
    \end{align*}
    for $x\in B_{r+\kappa}(x_j)$. In combination with \eqref{eq:L_trial} and Proposition \ref{Prop:Z_beta_asy} we obtain
\begin{align}
\nonumber
   & \|\mathcal{L}_{\beta}\Psi_\beta\|_{L^2(\mu_\beta)}^2  \leq C\int_{\mathcal{C}}|\mathcal{L}_{\beta}\Psi_\beta|^2 e^{-\beta U}\mathrm{d}\gamma_\beta \\
   \label{eq:x_j_terms}
    & \ \ \ \ \ \ \ \ + C\beta^{-1} \sum_{j=1}^m \int_{B_{r+\kappa}(x_j)}\left(1+\beta \|x-x_j\|_B^2\right)^2e^{-\beta \left(U+ \alpha_j \langle \eta_j,x -x_j \rangle^2 \right)}\mathrm{d}\gamma_\beta(x),
\end{align}
    for a suitable $C>0$ and $\beta$ large enough. Since $\mathcal{C}$ is contained in $[F\geq h+\kappa]$, and $\|x\|_B^2\leq 2 F(x)+c$ for some $c>0$, we can pick $\theta>0$ small enough such that
        \begin{align*}
        \inf_{x\in \mathcal{C}}\left(F(x)-\theta \|x\|^2_B\right)>h+\kappa/2.
    \end{align*}
 Picking $\theta$ furthermore small enough such that $e^{\theta\|x\|_B^2}\in L^2(\mathrm{d}\gamma)$, we use that $\mathcal{C}$ is closed and obtain by the
 large deviations principle for $\gamma_\beta$ and the Varadhan lemma that for $\beta$ large enough
    \begin{align*}
        \int_{\mathcal{C}}|\mathcal{L}_{\beta}\Psi_\beta|^2 e^{-\beta U}\mathrm{d}\gamma_\beta\leq  C\beta \int_{\mathcal{C}} e^{\beta \theta \|x\|_B^2} e^{-\beta U}\mathrm{d}\gamma_\beta \leq e^{-\beta(h+\kappa/2)} .
    \end{align*}
Regarding the terms in \eqref{eq:x_j_terms} we first define the modified potential
\begin{align*}
   U_j(x):=U(x+x_j)-\frac{1}{2}\langle x,\nabla^2   U(x_j)x \rangle  +  \langle x_j,x \rangle+\frac{1}{2}\|x_j\|^2_H,
\end{align*}
which arises naturally once we shift the integral by the element $x_j$ in the Cameron-Martin space $H$ and employ the modified Gaussian measures $\gamma_{\beta,j}$
\begin{align*}
   & T_j:=\int_{B_{r+\kappa}(x_j)}\left(1+\beta \|x-x_j\|_B^2\right)^2e^{-\beta \left(U+\alpha_j \langle \eta_j,x -x_j \rangle^2 \right)}\mathrm{d}\gamma_\beta(x)\\
    & =\int_{B_{r+\kappa}(0)}\left(1+\beta \|x\|_B^2\right)^2e^{-\beta \left(U(x_j+x)+\alpha_j \langle \eta_j,x\rangle^2 \right)}e^{-\frac{\beta}{2}\|x_j\|^2_H-\beta \langle x_j,x \rangle}\mathrm{d}\gamma_\beta(x)\\
    & = C_j \int_{B_{r+\kappa}(0)}\left(1+\beta \|x\|_B^2\right)^2e^{-\beta U_j(x)}\mathrm{d}\gamma_{\beta,j}(x).
\end{align*}
Since $U_j\in C^3(B)$ with $DU_j(0)=0$ and $D^2 U_j(0)=0$, we have for some $C>0$
\begin{align*}
U_j(x)\geq U_j(0)-C\|x\|_B^3\geq h-C(r+\kappa)\|x\|_B^2  
\end{align*}
in case $x\in B_{r+\kappa}(x_j)$. Choosing $r+\kappa$ small enough such that $e^{C(r+\kappa)\|x\|_B^2}\in L^1(\mathrm{d}\gamma_j)$,
\begin{align*}
    T_j & \leq C_j e^{-\beta h}\int \left(1+\beta \|x\|_B^2\right)^2e^{\beta C(r+\kappa)\|x\|_B^2}\mathrm{d}\gamma_{\beta,j}(x)\\
    & =C_j e^{-\beta h}\int \left(1+\|x\|_B^2\right)^2e^{ C(r+\kappa)\|x\|_B^2}\mathrm{d}\gamma_{j}(x)\leq C e^{-\beta h},
\end{align*}
for a suitable $C$. Hence, we have for $\beta$ large enough and a suitable $C$
\begin{align*}
    \|\mathcal{L}_{\beta}\Psi_\beta\|_{L^2(\mu_\beta)}^2  \leq C e^{-\beta h}/\beta.
\end{align*}

We note at this point that $\Psi_\beta(x)=\pm 1$ for $x\in B_\tau(x_\pm)$ and $\tau$ small enough, since $\|x_\pm - \pi x_\pm\|<\delta$ by Proposition \ref{Prop:Reduction_to_finite} and hence $x_\pm$ is contained in the open set $\pi^{-1}\mathcal{A}_\pm$ and so is $B_\tau(x_\pm)\subseteq \pi^{-1}\mathcal{A}_\pm$ for $\tau>0$ small enough. 

Regarding the upper bound on $\|\nabla \Psi_\beta\|_{L^2(\mu_\beta)}^2$ we proceed similar to our analysis of $\mathcal{L}_{\beta}\Psi_\beta$ and estimate
\begin{align*}
  &  \|A^{1/2}\nabla \Psi_\beta\|_{L^2(\mu_\beta)}^2  \leq Z_\beta^{-1}\sum_{j=1}^m \int_{B_{r+\kappa}(x_j)}|f_{\beta,j}'(\langle \eta_j,x-x_j \rangle)|^2 e^{-\beta U}\mathrm{d}\gamma_\beta+ e^{-\beta(h+\kappa/2)}\\
    & \ \ \  \ \ \ \leq Z_\beta^{-1} z(\epsilon,\beta)^{-2}\sum_{j=1}^m C_j \int_{B_{r+\kappa}(0)} e^{-\beta U_j}\mathrm{d}\gamma_{\beta,j}+ e^{-\beta(h+\kappa/2)}\\
    &  \ \ \  \ \ \ \leq \left(1+C/\sqrt{\beta}\right)\sum_{j=1}^m \frac{2C_j \alpha_j \beta}{\pi(C_-+C_+)}\int_{B_{r+\kappa}(0)} e^{-\beta U_j}\mathrm{d}\gamma_{\beta,j}+ e^{-\beta(h+\kappa/2)}.
\end{align*}
Using $|e^{-\beta U_j}-e^{-\beta h}|\leq C \beta e^{-\beta h}\|x\|_B^3 e^{q \beta \|x\|^2}$ on $B_{r+\kappa}(0)$, we further have
\begin{align*}
   \int_{B_{r+\kappa}(0)} e^{-\beta U_j}\mathrm{d}\gamma_{\beta,j}\leq e^{-\beta h}+\beta^{-1/2}e^{-\beta h}\int e^{q\|x\|^2}\mathrm{d}\gamma_{j}.
\end{align*}
Finally, for the lower bound on $\|\nabla \Psi_\beta\|_{L^2(\mu_\beta)}^2$, we compute
\begin{align*}
    \|A^{1/2}\nabla \Psi_\beta\|_{L^2(\mu_\beta)}^2 & \geq Z_\beta^{-1}\sum_{j=1}^m \int_{B_{r'}(x_j)}|f_{\beta,j}'(\langle \eta_j,x-x_j \rangle)|^2 e^{-\beta U}\mathrm{d}\gamma_\beta\\
    &\geq Z\left(1 - C/\sqrt{\beta}\right)\sum_{j=1}^m \frac{2C_j \alpha_j \beta}{\pi(C_-+C_+)}\int_{B_{r'}(0)} e^{-\beta U_j}\mathrm{d}\gamma_{\beta,j}
\end{align*}
and use again $|e^{-\beta U_j}-e^{-\beta h}|\leq C \beta e^{-\beta h}\|x\|_B^3 e^{q \beta \|x\|^2}$ on $B_{r'}(0)$ to estimate
\begin{align*}
    \int_{B_{r'}(0)} e^{-\beta U_j}\mathrm{d}\gamma_{\beta,j} & \geq e^{-\beta h}-\int_{B\setminus B_{r'}(0)} \gamma_{\beta,j}-\beta^{-1/2}e^{-\beta h}\int e^{q\|x\|^2}\mathrm{d}\gamma_{j}\\
    & \geq e^{-\beta h}-\left(\beta^{-1/2}+e^{-\beta q (r')^2}\right)e^{-\beta h}\int e^{q\|x\|^2}\mathrm{d}\gamma_{j}.
\end{align*}
\end{proof}

With Theorem \ref{theorem:same_hight} at hand we can verify Theorem \ref{Th:precise_Asy}.
\begin{proof}[Proof of Theorem \ref{Th:precise_Asy}]
    In order to prove \eqref{Eq:precise_Asy_1}, let $\Psi^{(\pm)}_\beta:=(1\pm \Psi_\beta(x))/2$ and let us define the trial state
    \begin{align*}
        \Phi_\beta:=\sqrt{\frac{C_+}{C_-}}\Psi^{(-)}_\beta-  \sqrt{\frac{C_-}{C_+}}\Psi^{(+)}_\beta.
    \end{align*}
    We first note that $\mathcal{C}':=B\setminus \left(B_{\tau}(x_-)\cup B_{\tau}(x_+)\right)$ is contained in $[F\geq u]$ for some $u>0$, allowing us to estimate
\begin{align*}
  \int_{\mathcal{C}'}\mathrm{d}\mu_\beta  \leq e^{-\beta u/2},
\end{align*}
for $\beta$ large enough by the large deviations principle for $\gamma_\beta$ and the Varadhan lemma. Choosing $\tau>0$ as 
in Theorem \ref{theorem:same_hight}, i.e. such that $\Psi_\beta(x)=\pm 1$ for $x\in B_\tau(x_\pm)$, we obtain by Proposition \ref{Prop:Z_beta_asy}
    \begin{align}
            \label{Eq:Orth_cond}
       |\langle 1,\Phi_\beta\rangle_{L^2(\mu_\beta)}|\leq & C/\sqrt{\beta},\\
       \label{Eq:Norm_cond} 
        \|\Phi_\beta\|_{L^2(\mu_\beta)}= & 1+O_{\beta\rightarrow \infty}\! \left(1/\sqrt{\beta}\right).
    \end{align}
    Furthermore, using the properties of $\Psi_\beta$ derived in Theorem \ref{theorem:same_hight}, we have
    \begin{align}
              \label{Eq:Var_cond}
         \|\mathcal{L}_{\beta} \Phi_\beta\|_{L^2(\mu_\beta)}^2  \leq & Ce^{-\beta h}/\beta\\
\nonumber 
          \left\langle \Phi_\beta,\mathcal{L}_\beta \Phi_\beta\right\rangle & =\frac{1}{\beta}\|A^{1/2}\nabla \Phi_\beta\|_{L^2(\mu_\beta)}^2   \\
                    \label{Eq:Exp_cond}
        &  =    \left(1 + O_{\beta\rightarrow \infty}\! \left(1/\sqrt{\beta}\right)\right)\sum_{j=1}^{m_0} \frac{(C_- + C_+ )C_j \alpha_j e^{-\beta h}}{2\pi C_- C_+} .
    \end{align}
    Using that $\lambda^{(1)}_\beta $ is the only eigenvalue of $\mathcal{L}_\beta$ in $(0,\delta)$ for a $\delta>0$ small enough, see Theorem \ref{Thm:Main_Abstract_Theorem}, we obtain by the Kato-Temple inequality for the trial state 
    \begin{align*}
        \omega_\beta:=\frac{\Phi_\beta}{\|\Phi_\beta\|_{L^2(\mu_\beta)}}
    \end{align*}
the following lower bound 
    \begin{align}
    \label{Eq:Kato_Temple}
   \lambda^{(1)}_\beta\geq  \left\langle \omega_\beta,\mathcal{L}_\beta \omega_\beta\right\rangle-\frac{\left\|\mathcal{L}_\beta \omega_\beta\right\|^2-\left\langle \omega_\beta,\mathcal{L}_\beta \omega_\beta\right\rangle^2}{\delta-\left\langle \omega_\beta,\mathcal{L}_\beta \omega_\beta\right\rangle}  \geq \left\langle \omega_\beta,\mathcal{L}_\beta \omega_\beta\right\rangle-\frac{\left\|\mathcal{L}_\beta \omega_\beta\right\|^2}{\delta-\left\langle \omega_\beta,\mathcal{L}_\beta \omega_\beta\right\rangle} .
    \end{align}
   By \eqref{Eq:Norm_cond} and \eqref{Eq:Exp_cond}, we know that
    \begin{align*}
        \left\langle \omega_\beta,\mathcal{L}_\beta \omega_\beta\right\rangle=\frac{ \left\langle \Phi_\beta,\mathcal{L}_\beta \Phi_\beta\right\rangle }{ \|\Phi_\beta\|_{L^2(\mu_\beta)}^2}=\left(1 + O_{\beta\rightarrow \infty}\! \left(1/\sqrt{\beta}\right)\right)\sum_{j=1}^{m_0} \frac{(C_- + C_+ )C_j \alpha_j e^{-\beta h}}{2\pi C_- C_+} .
    \end{align*}
    In combination with \eqref{Eq:Var_cond} we therefore obtain by \eqref{Eq:Kato_Temple}
    \begin{align*}
        \lambda^{(1)}_\beta\geq \left(1 + O_{\beta\rightarrow \infty}\! \left(1/\sqrt{\beta}\right)\right)\sum_{j=1}^{m_0} \frac{(C_- + C_+ )C_j \alpha_j e^{-\beta h}}{2\pi C_- C_+} .
    \end{align*}
    Regarding the upper bound, we define the state 
    \begin{align*}
        \omega'_\beta:=\frac{\Phi_\beta-\langle 1,\Phi_\beta\rangle_{L^2(\mu_\beta)}}{\sqrt{\|\Phi_\beta\|_{L^2(\mu_\beta)}^2-|\langle 1,\Phi_\beta\rangle_{L^2(\mu_\beta)}|^2}},
    \end{align*}
and note that $\lambda^{(1)}_\beta\leq \langle \omega'_\beta,\mathcal{L}_\beta.\omega'_\beta\rangle$, since $\omega'_\beta$ is normed and orthogonal to $1$, which is the eigenstate to the lowest eigenvalue $0$ of $\mathcal{L}_\beta $, i.e. $\mathcal{L}_\beta 1=0$. By \eqref{Eq:Orth_cond} and \eqref{Eq:Exp_cond}
\begin{align*}
     \langle \omega'_\beta,\mathcal{L}_\beta \omega'_\beta\rangle & =\frac{\langle \Phi_\beta,\mathcal{L}_\beta \Phi_\beta\rangle}{\|\Phi_\beta\|^2_{L^2(\mu_\beta)}-|\langle 1,\Phi_\beta\rangle_{L^2(\mu_\beta)}|^2}\\
     & =\left(1 + O_{\beta\rightarrow \infty}\! \left(1/\sqrt{\beta}\right)\right)\sum_{j=1}^{m_0} \frac{(C_- + C_+ )C_j \alpha_j e^{-\beta h}}{2\pi C_- C_+} ,
\end{align*}
which concludes the proof of \eqref{Eq:precise_Asy_1}. The other statement \eqref{Eq:precise_Asy_2} follows analogously, using the trial state $\Phi_\beta:=\Psi_\beta^{(+)}$ instead.
\end{proof}

\section{Examples}     \label{sec5}

\subsection{Allen-Cahn}

We consider on the Banach space $B:=C([0,\ell])$ the Gaussian measure $\gamma$ with corresponding Cameron-Martin space $H:=H^1((0,\ell))\subseteq B$, i.e. according to the natural embedding $B^*\subseteq H^*=H$ the measure $\gamma$ is characterized by
\begin{align*}
     \int_B  e^{ib(x)} d\gamma    =    e^{-\|b\|_{H^1((0,\ell))}^2/2}.
\end{align*}
On the Hilbert space $H^1((0,\ell))$ we then study the functional 
\begin{align}
\label{Eq:F_Introduction}
    F(x):=\frac{1}{2}\int_0^\ell x'(s)^2\mathrm{d}s+\int_0^\ell u(x(s))\mathrm{d}s,
\end{align}
with the function $u:\mathbb R\longrightarrow \mathbb R$ defined as
\begin{align*}
    u(z):=\frac{1}{4}z^4-\frac{1}{2}z^2+\frac{1}{4}.
\end{align*}
Notably, we can write $F(x)=\frac{1}{2}\|x\|_{H^1((0,\ell))}^2+U(x)$ with
\begin{align*}
    U(x):=\int_0^\ell u_*(x(s))\mathrm{d}s:=\int_0^\ell \left(u(x(s))-\frac{1}{2}x(s)^2\right)\mathrm{d}s.
\end{align*}
Let us define the operator $K$ acting on $L^2((0,\ell))$ as $K:=-\left(\frac{\mathrm{d}}{\mathrm{d}t}\right)^2+1$ with free boundary conditions, and let $A$ be the self-adjoint operator acting on $H^1((0,\ell))$ given by the restriction of $K$ to $H^3((0,\ell))\cap \mathrm{dom}(K)$.     We then consider the quadratic form corresponding to the stochastic Allen-Cahn equation
\begin{align*}
  \mathcal E^{(\mathrm{AC})}_{\beta}[f] : =    \frac{1}{\beta} \int_B \|\ \sqrt A \nabla f\|_{H^1((0,\ell))}^2  d\mu^{(\mathrm{AC})}_\beta ,
\end{align*}
where the measure $\mu^{(\mathrm{AC})}_\beta$ is defined according to \eqref{Eq:def_mu}. The critical points $x\in \mathcal{S}$ are given by the solutions
\begin{align*}
    0=\nabla F(x)=x+K^{-1}(x^3-2x),
\end{align*}
i.e. $x$ is in the domain of $K$ and we have $0=K\nabla F(x)=-x''+x^3-x$, or equivalently $x$ satisfies the stationary Allen-Cahn equation
\begin{align}
\label{Eq:stationary_AC}
    x''=x^3-x.
\end{align}
The solutions to \eqref{Eq:stationary_AC} have been analyzed in \cite{BerglundGentz2013} depending on the length $\ell$. For $n\pi<\ell\leq (n+1)\pi$, with $n\in \mathbb N$, there are exactly three constant solutions $x_-:=-1$, $x_0:=0$ and $x_+:=1$, as well as $2n$ non-constant solutions $\{x_1,\dots ,x_{2n}\}$. Introducing according to \eqref{Eq:Q_Operator} the operator $Q_x:=-\left(\frac{\mathrm{d}}{\mathrm{d}t}\right)^2+3x^2-1$ acting on $L^2((0,\ell))$, we note 
\begin{align*}
    Q_{x_\pm}=-\left(\frac{\mathrm{d}}{\mathrm{d}t}\right)^2+2
\end{align*}
is strictly positive, and $Q_{x_0}=-\left(\frac{\mathrm{d}}{\mathrm{d}t}\right)^2-1$ has exactly $n+1$ negative eigenvalues for $n\pi<\ell\leq (n+1)\pi$ and a trivial kernel as long as $\ell \notin \pi \mathbb Z$. Observe that the lowest eigenvalue equals $-1$. Furthermore, according to \cite{BerglundGentz2013}, the non-constants solutions $\{x_1,\dots ,x_{2n}\}$ can be arranged such that $x_{2k-1}=-x_{2k}$, and the operators $Q_{x_{2k-1}}=Q_{x_{2k}}$ have exactly $k$-many negative eigenvalues and a trivial kernel. Since the operator $Q_x$ is a Schrödinger Operator defined on a compact domain, it is clear that $Q_x$ is invertible as long as the kernel is trivial. In this case $\nabla^2F(x)$ is invertible too with inverse given by
\begin{align}
\label{Eq:Inverse_nabla_square_F}
    Q^{-1}A=(-\Delta +3x^2-1)^{-1}(-\Delta+1)=1-Q^{-1}(3x^2-2).
\end{align}
Indeed, $Q^{-1}(3x^2-2)$ is a bounded operator, as the multiplication by the smooth function $3x^2-2$ is a bounded operator on $H^1((0,\ell))$ and so is $Q^{-1}$. Viewing the dual space $B^*$ as a subset of $H^1((0,\ell))$, we can further embed the space $L^2((0,\ell))$ in $B^*$ via the map $\iota(y):=K^{-1}(y)$, i.e. $\mathrm{dom}(K)\subseteq B^*$. Since $\mathrm{dom}(A)\subseteq \mathrm{dom}(K)$,
\begin{align*}
A^{-1}(B^*)\cap B^*=A^{-1}(B^*)\supseteq A^{-1}(\mathrm{dom}(A))=\mathrm{dom}(A^{2})
\end{align*}
is dense in $H^1((0,\ell))$ and $\eta_x:=A^{-1}\nu_x$ defined in \eqref{Eq:def_of_eta_x} is an element of $\mathrm{dom}(A)\subseteq  B^*$.

Denoting with $\lambda^{(\mathrm{AC})}_{\beta,j}$ the minmax values of $ \mathcal E^{(\mathrm{AC})}_{\beta}$, we obtain as a consequence of Theorem \ref{Thm:Main_Abstract_Theorem} and Theorem \ref{Th:precise_Asy} the following corollary.
\begin{corollary}
\label{Cor:AC_result}
Let $\ell>0$. Then
\[
   \liminf_{\beta\to\infty}\lambda_{\beta,2}^{(\mathrm{AC})}>0 .
\]
Furthermore, as $\beta\to\infty$, if $0<\ell<\pi$, then
\begin{equation} 
   \label{Eq:AC_simple}
   \lambda_{\beta,1}^{(\mathrm{AC})}
   =
   \frac1\pi
   \sqrt{
      \frac{\det[\nabla^2F(x_-)]}
           {|\det[\nabla^2F(x_0)]|}
   }\,
   e^{-\beta\ell/4}
   \left(1+O(\beta^{-1/2})\right).
\end{equation}
If $n\pi<\ell<(n+1)\pi$ for some $n\in\mathbb N$, $n\ge1$, then
\[
   \lambda_{\beta,1}^{(\mathrm{AC})}
   =
   \frac{2\alpha}{\pi}
   \sqrt{
      \frac{\det[\nabla^2F(x_-)]}
           {|\det[\nabla^2F(x_1)]|}
   }\,
   e^{-\beta F(x_1)}
   \left(1+O(\beta^{-1/2})\right),
\]
where $-\alpha<0$ is the unique negative eigenvalue of $Q_{x_1}$.
\end{corollary}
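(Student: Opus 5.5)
The plan is to deduce the corollary from Theorem~\ref{Thm:Main_Abstract_Theorem} and Theorem~\ref{Th:precise_Asy}. The work splits into three parts: verifying Assumptions~\ref{Ass_for_theorem_I} and~\ref{Ass_for_theorem_II} for $U(x)=\int_0^\ell u_*(x(s))\,ds$ on $B=C([0,\ell])$; verifying that $\eta_x\in\mathcal K_A$, which is already noted in the text before the corollary; and identifying $h$, $\mathcal T_h$, $\alpha_j$ and the constants $C_\pm$, $C_j$ in each length regime.

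\textbf{Assumption~\ref{Ass_for_theorem_I}.}
\begin{itemize}
\item Since $u_*(z)=\frac14 z^4-z^2+\frac14$ is bounded below, $U$ is bounded below and smooth on $B$.
\item $DU(x)=u_*'(x(\cdot))$ is a measure with density. Its $B^*$-norm is at most $\ell\,\sup|u_*'(x)|\le C(1+\|x\|_B^3)$, which gives \eqref{eq:expo_control}.
\item The only minima are $x_\pm=\pm1$, and $\nabla^2F(x_\pm)\ge c$ by positivity of $Q_{x_\pm}=-\partial^2+2$.
\item The key point is \eqref{Eq:Th_Assumption}. In this case $\nabla U(x)=K^{-1}u_*'(x)$ and $\langle DU(x),x\rangle=\int_0^\ell u_*'(x)x\,ds$. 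The function $z\mapsto z u_*'(z)=z^4-2z^2$ is bounded below by $-1$. The term $\|\nabla U\|_H^2$ is nonnegative.
\item $\Delta U(x)=\operatorname{tr}(K^{-1}\circ u_*''(x))=\int_0^\ell G(s,s)\,u_*''(x(s))\,ds$, where $G$ is the Green kernel of $K$. Since $u_*''(z)=3z^2-2\le 3z^2$, this gives $-\frac12\Delta U\ge -C\int_0^\ell x^2\,ds - C$.
\item This negative part must be absorbed by $\frac\beta2\int_0^\ell x^4\,ds$. Using $\beta z^4/2-Cz^2\ge -C^2/(2\beta)$, we get $\beta^{-1}V_\beta\ge -C'$, uniformly in $\beta$.
\end{itemize}

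\textbf{Assumption~\ref{Ass_for_theorem_II}.} For $\ell\notin\pi\mathbb Z$, all critical points listed from \cite{BerglundGentz2013} have trivial kernel. Invertibility of $\nabla^2F$ then follows from \eqref{Eq:Inverse_nabla_square_F}.

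For \eqref{eq:assump_comp}, note that $\langle y,\nabla^2F(x)y\rangle_H=\int_0^\ell (y'^2+(3x^2-1)y^2)\,ds$. This is at least $\|y\|_{H^1}^2-C\|y\|_{L^2}^2$. Moreover $\langle y,A^{-1}y\rangle_{H^1}=\langle y,K^{-1}y\rangle_{H^1}=\|y\|_{L^2}^2$, so \eqref{eq:assump_comp} holds.

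\textbf{First claim.} Theorem~\ref{Thm:Main_Abstract_Theorem} with $k_0=2$ gives $\liminf_\beta\lambda_{\beta,2}^{(\mathrm{AC})}>0$. For $\ell\in\pi\mathbb Z$ the nondegeneracy part of Assumption~\ref{Ass_for_theorem_I} only concerns positive semidefinite critical points, namely $x_\pm$, so the claim holds for every $\ell$.

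\textbf{Eyring--Kramers asymptotics.} Both regimes use $F(x_-)=F(x_+)=0$, so \eqref{Eq:precise_Asy_1} applies with $C_-=C_+$. The prefactor becomes
\[
\frac{2}{2\pi C_+}\sum_j C_j\alpha_j=\sum_j\frac{\alpha_j}{\pi}\sqrt{\frac{\det\nabla^2F(x_-)}{|\det\nabla^2F(x_j)|}}.
\]

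\emph{Case $0<\ell<\pi$.} The only saddle is $x_0=0$, with $F(x_0)=\ell/4$. Its single negative eigenvalue of $Q_{x_0}$ is $-1$, so $\alpha=1$. Hence $h=\ell/4$, $m_0=1$, and we obtain \eqref{Eq:AC_simple}.

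\emph{Case $n\pi<\ell<(n+1)\pi$.} The index-one critical points are $x_1$ and $x_2=-x_1$, with equal energy and equal spectra. One checks $F(x_1)<F(x_0)=\ell/4$, so $h=F(x_1)$ and $\mathcal T_h=\{x_1,x_2\}$. Each of them connects $x_-$ and $x_+$, i.e.\ $\sigma_1\ne\sigma_2$ and $m_0=2$. The factor $2$ in the formula comes from these two saddles.

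\textbf{Main obstacle.} I expect the hardest step to be the Morse-theoretic input. One must show that $h=F(x_1)$ is the communication height and that each of $\pm x_1$ separates the two wells with the stated local branch structure. The other critical points must lie strictly above $h$ or be irrelevant; in particular $F(x_{2k-1})>F(x_1)$ for $k\ge2$.

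I would argue via the mountain-pass characterization. Unstable manifolds of $\pm x_1$ are one-dimensional and flow to $\pm1$, with the two ends going to different minima by monotonicity of the kink profile. This uses the energy ordering of stationary solutions from \cite{BerglundGentz2013}.
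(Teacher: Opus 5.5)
Your proposal is correct and takes the paper's route. You verify Assumptions~\ref{Ass_for_theorem_I} and~\ref{Ass_for_theorem_II}, together with $U\in C^3(B)$ and $\eta_x\in\mathcal K_A$, and then read off $k_0=2$, $C_-=C_+$, $h$, $m_0$ and $\alpha_j$ from Theorems~\ref{Thm:Main_Abstract_Theorem} and~\ref{Th:precise_Asy}. In places your verification is cleaner than the paper's: you compute $\Delta U(x)=\int_0^\ell G(s,s)\,u_*''(x(s))\,ds$ through the non-constant diagonal of the Neumann Green kernel rather than an averaged trace identity, and you get \eqref{eq:assump_comp} directly from $\langle y,A^{-1}y\rangle_{H^1}=\|y\|_{L^2}^2$. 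You also make explicit $h=F(x_1)$, $\mathcal T_h=\{x_1,x_2\}$ and $m_0=2$, which the paper leaves to its standing Morse-theoretic description around \eqref{eq:dist}. One small slip: in the bound on $\beta^{-1}V_\beta$ you use the quartic term twice, once in $z^4-2z^2\ge-1$ and once to absorb $-Cz^2$. Writing $\frac\beta2 z^4-(\beta+C)z^2\ge-(\beta+C)^2/(2\beta)$ repairs this and still gives a $\beta$-uniform lower bound.
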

\begin{remark}
    (a) Since in this special case $U\in C^4(B)$ the error term $O  ( \beta^{-1/2} )   $   can be improved 
    to   $ O  ( \beta^{-1} )$.  \\
    (b) In case $0<\ell<\pi$ the formula in \eqref{Eq:AC_simple} can be simplified further, as the eigenvalues of $\nabla^2 F(x_-)$ are explicitly given by $\frac{\pi^2k^2+2\ell^2}{\pi^2 k^2+\ell^2}$ for $k \in \mathbb Z$, and similarly the eigenvalues of $\nabla^2 F(x_0)$ are given by $\frac{\pi^2k^2-\ell^2}{\pi^2 k^2+\ell^2}$. Explicitly, we obtain
    \begin{align*}
        \lambda^{(\mathrm{AC})}_{\beta,1} & =     \frac{1}{\pi}\sqrt{\prod_{k\in \mathbb Z}\frac{\pi^2k^2+2\ell^2}{|\pi^2k^2-\ell^2|}}e^{-\frac{\beta \ell}{4}}           \left(1+O  ( \beta^{-1/2} )\right)        \\
        & =      \frac{\sqrt{2}\sinh(\sqrt{2}\ell)}{\pi\sin(\ell)}e^{- \beta \ell /4}          \left(1+O  ( \beta^{-1/2} )\right)     ,
    \end{align*}
  where we have used Euler's product formula to express the infinite product
    \begin{align*}
       \sqrt{\prod_{k\in \mathbb Z}\frac{\pi^2k^2+2\ell^2}{|\pi^2k^2-\ell^2|}}=\sqrt{2}\prod_{k=1}^\infty \frac{\pi^2k^2+2\ell^2}{\pi^2k^2-\ell^2}=\frac{\sqrt{2}\sinh(\sqrt{2}\ell)}{\sin(\ell)}.
    \end{align*} \\
    (c) In the case of periodic boundary conditions, instead of the free boundary conditions studied in Corollary \ref{Cor:AC_result}, the analysis in \cite{BerglundGentz2013} yields that again $x_-$ and $x_+$ are the only critical points with $\nabla^2 F(x_\pm)\geq 0$, and that $\nabla^2 F(x_\pm)$ is invertible. Hence, it is easy to see that for periodic boundary conditions the Assumption \ref{Ass_for_theorem_I} is satisfied, and hence Theorem \ref{Thm:Main_Abstract_Theorem} is applicable in this case as well, yielding 
    \begin{align*}
        \varliminf_{\beta\rightarrow \infty}\lambda^{(\mathrm{AC},\mathrm{periodic})}_{\beta,2}    >0.
    \end{align*}
    However, Theorem \ref{Th:precise_Asy} is not directly applicable for $\ell>\pi$, as in this case there is a translation-invariant orbit of critical points which have exactly one negative eigenvalue and a non-trivial kernel. 
    \begin{align*}
        \{x_{t}:t\in [0,\ell)\}.
    \end{align*} 
  \\
(d) Similar as in \cite{BerglundGentz2013}, our result is not specific to the concrete form of the potential $u(z):=\frac{1}{4}z^4-\frac{1}{2}z^2+\frac{1}{4}$. To be precise, we might consider any $u\in C^2(\mathbb R)$ which satisfies: there are exactly two local minima $x_\pm$ and one maximum $x_0$, such that $u''$ is non-zero at all three stationary points ( w.l.o.g. let $x_-<x_0=0<x_+$, $u(0)=0$ and $u''(0)=-1$), and we assume there exists a $p_0\in \mathbb N_{\geq 2}$ 
\begin{itemize}
    \item such that 
    \begin{align*}
        |u^{(2)}(x)|\leq C (1+|x|^{2(p_0-1)}),
    \end{align*}
    \item and such that for some $a,c>0$
    \begin{align*}
        \mathrm{sgn}(x)u'(x)\geq a |x|^{2p_0-1}-c,
    \end{align*}
    \end{itemize}
and that the period $T(E)$ as a function of $E$ satisfies
    \begin{align*}
        T'(E)>0.
    \end{align*}
     It has been verified in \cite{BerglundGentz2013} that the assumptions above are always satisfied, in case we have for all $x\in (x_-,x_+)\setminus \{0\}$
    \begin{align*}
        (u')^2>2u u''.
    \end{align*}
  In particular, the assumptions are met for all potentials $u(x):=\lambda x^{2m}-\frac{x^2}{2}$ with $\lambda>0$ and $m\in \mathbb N_{\geq 2}$.
\end{remark}
\begin{proof}[Proof of Corollary \ref{Cor:AC_result}]
    In the following we have to verify that Assumptions \ref{Ass_for_theorem_I} and \ref{Ass_for_theorem_II} hold. We first note that the additional Assumptions stated in Theorem \ref{Th:precise_Asy} hold, as $\eta_x\in B^*$ and $U\in C^3(B)$, the second property follows from the multi-linear form
    \begin{align*}
        (v_1,v_2,v_3)\mapsto \partial_{v_1}\partial_{v_2}\partial_{v_3}U(x)=\int_0^\ell u_*'''(x(s))v_1(s)v_2(s)v_3(s)\mathrm{d}s
    \end{align*}
    being bounded and depending continuously on $x\in B$. We verify explicitly
    \begin{align}
    \nonumber
     \sup_{v:\|v\|_{\infty}=1} |\partial_{v}^3U(x)-\partial_{v}^3U(y)|  &  = \sup_{\eta:\|v\|_{\infty}=1}\int_{0}^\ell |v(s)|^3 \big|u_*'''(y(s))-u_*'''(x(s))\big|\mathrm{d}s\\
     \nonumber
     &  \leq \int_{0}^\ell \big|u_*'''(y(s))-u_*'''(x(s))\big|\mathrm{d}s\\
     \label{Eq:Final_Step_Second_Der_Cont}
     & \leq \ell \sup_{|x|\leq \|x\|_\infty,|z|\leq \|y-x\|_\infty}|u_*'''(x+z)-u_*'''(x)|,
    \end{align}
     with the right hand side in Eq.~(\ref{Eq:Final_Step_Second_Der_Cont}) converging to zero as $\|y-x\|_\infty\longrightarrow 0$. Regarding Assumption \ref{Ass_for_theorem_I}, it is clear that $U$ is lower bounded as $u_*$ is as well and based on the discussion above \eqref{Eq:Inverse_nabla_square_F} we know that $\nabla^2 F(x_\pm)\geq c>0$, and similar to the computation in \eqref{Eq:Final_Step_Second_Der_Cont} we have $\|DU(x)\|\leq \ell \|u_*'(x(s))\|_\infty\leq C(1+\|x\|_\infty)^3$. Denoting with $\mathcal{M}$ the multiplication operator by $x\mapsto u''(x(s))$, we further obtain
   \begin{align*}
       \beta^{-1}V_\beta(x) & =\frac{1}{4}\|\nabla U\|_H^2+\frac{1}{2}\langle DU(x),x\rangle-\frac{1}{2\beta}\mathrm{tr}_H\nabla^2 U\\
        & \geq \frac{1}{2}\langle DU(x),x\rangle-\frac{1}{2\beta}\mathrm{tr}_H\nabla^2 U\\
        &= \int_{0}^\ell x(s)u_*'(x(s))\mathrm{d}s - \frac{1}{2\beta}\mathrm{tr}_{L^2}\! \left[K^{ -1}\mathcal{M}\right]\\
                &= \int_{0}^\ell x(s)u_*'(x(s))\mathrm{d}s - \frac{1}{2\beta}\mathrm{tr}_{L^2}\! \left[K^{ -1}\right]\frac{1}{\ell}\int_{0}^\ell u_*''(x(s))\mathrm{d}s\\
        &\geq \int_{0}^\ell \left\{x(s)^{2p_0}+c- \frac{C}{\ell \beta}\mathrm{tr}_{L^2}\! \left[K^{-1}\right]x(s)^{2p_0-2}\right\}\mathrm{d}s\\
        & \geq \ell \left(c-\left(\frac{C}{2\ell \beta}\mathrm{tr}_{L^2}\! \left[K^{ -1}\right]\right)^{p_0}\right),
   \end{align*}
and in particular \eqref{Eq:Th_Assumption} holds. Regarding Assumption \ref{Ass_for_theorem_II}, we only need to verify \eqref{eq:assump_comp} as the other assumptions are discussed around \eqref{Eq:Inverse_nabla_square_F}. We compute for critical points $x$
\begin{align}
\nonumber
& \langle y ,  \nabla^2 F(x)y\rangle_{H^1((0,\ell))}  =\|y\|^2_{H^1((0,\ell))}+ \langle y , A^{-1}(3x^2-1)y \rangle_{H^1((0,\ell))}\\
\label{Eq:comp_bound_A_square}
 & \ \ \ \ \ \geq \|y\|^2_{H^1((0,\ell))}- \epsilon \langle y , (3x^2-1)^2 y \rangle_{H^1((0,\ell))}- \epsilon^{-1}\langle y , A^{-2}y \rangle_{H^1((0,\ell))}\\
 \nonumber
&  \ \ \ \ \ \geq \|y\|^2_{H^1((0,\ell))}- \epsilon \langle y , (3x^2-1)^2 y \rangle_{H^1((0,\ell))}- a^{-1}\epsilon^{-1}\langle y , A^{-1}y \rangle_{H^1((0,\ell))},
\end{align}
which concludes the proof for $\epsilon>0$ small enough, as the multiplication by $(3x^2-1)^2$ is a bounded operator on $H^1((0,\ell))$.
\end{proof}

\subsection{Cahn-Hilliard}

Here the Banach space $\widetilde{B}\subseteq C([0, \ell])$ consists of functions with zero average and $\widetilde{\gamma}$ is the centered Gaussian measure with corresponding Cameron-Martin space 
\begin{align*}
   \dot{H}^1((0,\ell)):=\left\{y\in H^1((0,\ell)):\int y(s)\mathrm{d}s=0\right\},
\end{align*}
equipped with the norm $\|y\|_{\dot{H}^1((0,\ell))}:=\int (y'(s))^2\mathrm{d}s$. Furthermore, let us introduce $\widetilde{K}x:=-x''$ defined on the Hilbert space $\{y\in L^2((0,\ell)):\int y(s)\mathrm{d}s=0\}$ with free boundary conditions, and denote with $\widetilde{A}$ the restriction of the operator $(\widetilde{K})^2$ to the domain $H^5((0,\ell))\cap \mathrm{dom}(\widetilde{K}^2)$. We then consider the Allen-Cahn functional in \eqref{Eq:F_Introduction} restricted to $\widetilde{B}$
\begin{align*}
    \widetilde{F}(x)=\frac{1}{2}\|x\|^2_{\dot{H}^1((0,\ell))}+\int_0^\ell \left(\frac{1}{4}x(s)^4-\frac{1}{2}x(s)^2+\frac{1}{4}\right)\mathrm{d}s,
\end{align*}
and denote the corresponding measure and quadratic form by $\mu_\beta^{(\mathrm{CH})}$ and
\begin{align*}
    \mathcal E^{(\mathrm{CH})}_{\beta}[f] : =    \frac{1}{\beta} \int_{\widetilde{B}} \|\ \sqrt{\widetilde{A}} \nabla f\|_{\dot{H}^1((0,\ell))}^2  d\mu^{(\mathrm{CH})}_\beta.
\end{align*}
Furthermore, let $\lambda^{(\mathrm{CH})}_{\beta,j}$ be the minmax values of $ \mathcal E^{(\mathrm{CH})}_{\beta}$. Before we state the asymptotic behavior of $\lambda^{(\mathrm{CH})}_{\beta,j}$, we analyze the critical points of $\widetilde{F}$ in the subsequent Proposition \ref{Prop:Critical_points_CH}.
\begin{proposition}
\label{Prop:Critical_points_CH}
    Let $\ell=\pi+\epsilon$.  Then, for $\epsilon>0$ small enough, the critical points of $\widetilde{F}$ are given by the solutions $x$ of the stationary Allen-Cahn equation satisfying $\int x(t)\mathrm{d}t=0$, which for $0<\epsilon<\pi$ are given by $\{x_0,x_1,x_2\}$, where $x_0:=0$ and $x_2(t)=x_1(\ell-t)=-x_1(t)$. Furthermore, $x_1$ and $x_2$ are the global minima, satisfying $\widetilde{F}(x_1)=\widetilde{F}(x_2)$ and $\nabla^2\widetilde{F}(x_1),\nabla^2\widetilde{F}(x_2)\geq c>0$, and the Hessian $\nabla^2\widetilde{F}(x_0)$ is invertible and has exactly a single negative eigenvalue. 
\end{proposition}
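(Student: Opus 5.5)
First I identify the critical points. A critical point $x\in\dot H^1$ of $\widetilde F$ satisfies $\langle \nabla\widetilde F(x),y\rangle=0$ for all zero-average $y$. Hence $-x''+x^3-x=\lambda$ weakly, with a Lagrange multiplier $\lambda\in\mathbb R$ and Neumann boundary conditions. Integrating over $(0,\ell)$ gives $\lambda=\ell^{-1}\int_0^\ell x^3$.

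So the problem becomes the constrained ODE $x''=x^3-x-\lambda$, $x'(0)=x'(\ell)=0$, $\int x=0$. (The statement says "solutions of the stationary Allen--Cahn equation"; I read this as including the multiplier, and I would check a posteriori that $\lambda=0$ for the solutions found, using the symmetry $x_1(\ell-t)=-x_1(t)$. That symmetry forces $\int x_1^3=0$.)

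\textbf{Bifurcation picture.} The plan is a Lyapunov--Schmidt / Crandall--Rabinowitz analysis near $\ell=\pi$ combined with a priori bounds.

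First, a priori bounds. Testing the equation with $x$ and using coercivity of $u$ gives $\|x\|_{H^1}\le C$ uniformly in $\lambda$ and $\ell\in[\pi,2\pi]$. Then every critical point lies in a compact set, and at $\ell=\pi$ the energy landscape is close to the quadratic one.

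Second, the linearization at $x_0=0$ on zero-average functions is $-\partial_t^2-1$. Its Neumann eigenvalues are $(k\pi/\ell)^2-1$ for $k\ge1$; the constant mode is excluded by the constraint. For $\ell=\pi+\epsilon$ with $\epsilon$ small, exactly $k=1$ gives a negative eigenvalue, about $-2\epsilon/\pi$, while $k\ge2$ stays uniformly positive. So $\nabla^2\widetilde F(0)$ is invertible with exactly one negative direction. In the Cameron--Martin metric $\nabla^2\widetilde F(0)=1-\widetilde K^{-1}$, whose eigenvalues are $1-\ell^2/(k\pi)^2$, and this has the same sign count.

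Third, I reduce to one dimension via Lyapunov--Schmidt. Write $x=s\cos(\pi t/\ell)+w$ with $w\perp\cos$. The $w$-equation is solved by the implicit function theorem, giving $w=O(s^3)$ uniformly for $\ell$ near $\pi$. The reduced equation is $g(s,\epsilon)=s\bigl(c_1\epsilon - c_2 s^2+O(s^4,\epsilon s^2,\epsilon^2)\bigr)$ with $c_1,c_2>0$. The sign of $c_2$ is the quartic coefficient $\tfrac34\int\cos^4$ from $x^3$; the multiplier correction is zero by parity. This is a supercritical pitchfork: for small $\epsilon>0$ the zeros are exactly $s=0$ and $s=\pm s_\epsilon$ with $s_\epsilon\sim\sqrt{c_1\epsilon/c_2}$.

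Uniqueness of the reduction only covers a neighborhood of $0$. To exclude critical points far from $0$, I use compactness. If there were critical points $x_\epsilon$ with $\|x_\epsilon\|\ge\rho$ as $\epsilon\to0$, a subsequence would converge to a nonzero critical point at $\ell=\pi$. Testing that limit equation against $x$ and using Wirtinger's inequality $\int x'^2\ge\int x^2$ on zero-average functions on $(0,\pi)$ gives $\int x^4\le0$. Indeed, $\int x'^2-\int x^2+\int x^4=\lambda\int x=0$. Hence $x=0$, a contradiction. This pins down the set $\{x_0,x_1,x_2\}$ with $x_2=-x_1$.

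The relation $x_1(\ell-t)=-x_1(t)$ follows because the reflected-negated function is also a solution on the same branch, namely the $s_\epsilon$ branch, since $\cos(\pi(\ell-t)/\ell)=-\cos(\pi t/\ell)$. By uniqueness it equals $x_1$.

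\textbf{Hessians and the main obstacle.} $\widetilde F$ is coercive on $\dot H^1$ and weakly lower semicontinuous, so a global minimizer exists. Its Hessian cannot have a negative direction, which excludes $0$. The minimizers are therefore $x_1,x_2$, and they have equal energy by the symmetry $x\mapsto -x$.

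Nondegeneracy $\nabla^2\widetilde F(x_{1,2})\ge c>0$ comes from the reduction. In the $w$-directions the Hessian is close to that at $0$, which is positive. In the $s$-direction, $\partial_s g(s_\epsilon)=-2c_2s_\epsilon^2(1+o(1))<0$ translates to a positive second derivative of the reduced energy, of order $\epsilon$. The main obstacle is making this transfer rigorous: the reduced energy's second derivative has to control the full Hessian, with uniform spectral gaps, including the lower bound in the $H$-metric rather than $L^2$. I would handle this with the standard Schur-complement identity for Lyapunov--Schmidt reductions. Since $\epsilon$ is fixed in the statement, any $c=c(\epsilon)>0$ suffices.
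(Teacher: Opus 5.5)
Your proposal is correct, but it takes a different route from the paper's proof.

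\textbf{The paper's route.} It first derives the Wirtinger-based a priori bound $\|x\|_\infty+|\lambda|\le C\epsilon^{1/4}$. It then proves $\lambda=0$ for an arbitrary critical point by a shooting analysis of the Neumann solutions $x_{\lambda,r}$: expansions in $\lambda$ and $r$, plus control of the return time $T_{\lambda,r}$, give $\int_0^\ell x=\lambda\pi+o(\lambda)$. The list $\{x_0,x_1,x_2\}$ is then read off from the Berglund--Gentz classification of stationary Allen--Cahn solutions. Nondegeneracy at $x_{1,2}$ comes from a Kato--Temple estimate for $P(-\Delta-1+3x^2)P$, with trial function $\cos(\pi t/\ell)$ and amplitude $r\approx\sqrt{8\epsilon/(3\pi)}$; this gives lowest eigenvalue $4\epsilon/\pi+O(\epsilon^2)$.

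\textbf{Your route.} You do a Lyapunov--Schmidt reduction on the constrained problem. The multiplier drops out of the reduced equation because $\cos(\pi t/\ell)$ has mean zero. Smallness of all critical points puts them in the uniqueness neighborhood. You then get $\lambda=0$ a posteriori from the symmetry $x\mapsto -x(\ell-\cdot)$, which fixes the kernel direction and hence every point of the branch. Nondegeneracy comes from the Schur-complement identity for the reduced energy, and $G''(s_\epsilon)$ reproduces the same order-$\epsilon$ eigenvalue.

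The obstacle you flag is mild. Since $\cos(\pi t/\ell)$ is an eigenvector of $\widetilde K$, its $L^2$- and $\dot H^1$-orthogonal complements coincide. So the Schur complement can be done directly in the Cameron--Martin metric, where the complementary block is bounded below by $1-\ell^2/(4\pi^2)-O(\epsilon)$.

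\textbf{Comparison.} Your argument is more self-contained. It avoids the shooting estimates and the external ODE classification, and it supplies the global-minimality argument the paper leaves implicit. The paper's argument gets $\lambda=0$ without any local uniqueness theory and makes the spectral lower bound explicit.

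Both arguments are local in $\epsilon$. The clause that the zero-average stationary solutions are exactly $\{x_0,x_1,x_2\}$ for all $0<\epsilon<\pi$ is not reached by your bifurcation argument. As in the paper, it has to be taken from the Berglund--Gentz classification, with $\pm1$ excluded by the mean-zero constraint.
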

\begin{proof}
    It is clear that any stationary solution of the Allen-Cahn equation with $\int x(t)\mathrm{d}t=0$ is a critical point of $\widetilde{F}$. On the other hand, if $x\in \dot{H}^1((0,\ell))$ is a critical point of $\widetilde{F}$, then $x$ satisfies Neumann boundary conditions and there exists a $\lambda\in \mathbb R$ with
    \begin{align}
    \label{Eq:EL_eq_orthogonal}
        x''=x^3-x+\lambda.
    \end{align}
    In the following we want to show that $\lambda=0$, i.e. that $x$ is a stationary solution of the Allen-Cahn equation. In a first step we are going to derive the apriori bounds $\|x\|_\infty+|\lambda|\leq C\epsilon^{1/4}$ for a suitable $C>0$. Using $\int x(t)\mathrm{d}t=0$, we note
\begin{align*}
    \int (x')^2\mathrm{d}t\geq \frac{\pi}{\pi+\epsilon}\int x(t)^2\mathrm{d}t.
\end{align*}
Together with \eqref{Eq:EL_eq_orthogonal} this yields
\begin{align*}
    0\geq \int x(t)^4\mathrm{d}t-\frac{\epsilon}{\pi+\epsilon}\int x(t)^2\mathrm{d}t,
\end{align*}
and in particular $\int x(t)^2\mathrm{d}t\leq C\epsilon$ for a suitable constant $C>0$. Similarly, we have $\int (x')^2\mathrm{d}t\leq C$, and interpolation yields the rough, but sufficient, estimate
\begin{align*}
  |x(0)|\leq \|x\|_\infty\leq C \epsilon^{1/4}  ,
\end{align*}
for some constant $C>0$. We further note that, since $x'(0)=x'(\pi+\epsilon)=0$, there exists a $t\in (0,\pi+\epsilon)$ such that $x''(t)=0$, and therefore
\begin{align*}
    0=x(t)^3-x(t)+\lambda.
\end{align*}
As a consequence, we obtain $|\lambda|\leq \|x\|_\infty\leq C\epsilon^{1/4}$ as well.

We furthermore note that in case $r:=x(0)=0$, then, depending on the sign of $\lambda$, the solution $x(t)$ lives in either $[0,\infty)$ or $(-\infty,0]$. However, as $\int x(t)\mathrm{d}t=0$, this is only possible given that $x=0$, which is a solution of the stationary Allen-Cahn equation. Hence, we can assume in the following that $r>0$. Similarly, we note that for $0<r<r_0$ and $|r-\lambda|< \tau r$, the solution satisfies
\begin{align*}
    x(t)=(1+o_{\tau,r\rightarrow 0}(1))r\geq r/2>0,
\end{align*}
for $\tau$ and $r$ small enough, which is a contradiction to $\int x(t)\mathrm{d}t=0$. Consequently, we can assume in the following
\begin{align}
   \label{Eq:ass_lambda_r}
    |r-\lambda| \geq \tau r>0.
\end{align}

In the following let $x_{\lambda,r}$ denote a general solution of \eqref{Eq:EL_eq_orthogonal} with $x_{\lambda,r}(0)=r$ and $x_{\lambda,r}'(0)=0$, i.e. $x=x_{\lambda,x(0)}$. By a perturbative argument it is clear that
\begin{align}
\label{Eq:approx_in_lambda}
    x^{(n)}_{\lambda,r}=x^{(n)}_{0,r}+\lambda \varphi^{(n)}_{r}+o_{\lambda\rightarrow 0}(\lambda),
\end{align}
where $\varphi_{r}$ satisfies $\varphi_{r}(0)=\varphi_{r}'(0)=0$ and solves
\begin{align*}
    \varphi_{r}''=3x_{0,r}^2\varphi_{r}-\varphi_{r}+1.
\end{align*}
Similarly, we have that
\begin{align}
\label{Eq:approx_in_r}
    \varphi_r & =\varphi_0+o_{r\rightarrow 0}(1)=1-\cos(t)+o_{r\rightarrow 0}(1),\\
    \label{Eq_compare_harmonic}
    x^{(n)}_{0,r}(t) & =r\cos^{(n)}(t)+r^3\left(\int_0^t \sin(t-s)\cos(s)^3\mathrm{d}s\right)^{(n)}+o_{r\rightarrow 0}(r^3).
\end{align}
Let $T_{\lambda,r}:=\inf\{t>0:x'_{\lambda,r}(t)=0\}$. By \eqref{Eq_compare_harmonic}, and the mean value theorem,
\begin{align}
\label{Eq:for_T_0_r}
   T_{0,r}=\pi+\frac{3\pi r^2}{8}+o_{r\rightarrow 0}(r^2). 
\end{align}
Together with \eqref{Eq:approx_in_lambda} and \eqref{Eq:approx_in_r} for the first derivative $n=1$, we obtain 
\begin{align*}
     x'_{\lambda,r}(T_{0,r}\pm \kappa)=\pm (\lambda-r+o_{r,\kappa\rightarrow 0}(r))\kappa+o_{\lambda,r\rightarrow 0}(\lambda).
\end{align*}
By assumption \eqref{Eq:ass_lambda_r} we have $|r-\lambda|\geq \tau \max\{r,|\lambda|\}$, and hence we find $\kappa$ of the order $o_{\lambda,r\rightarrow 0}\! \left(\min\{1,\frac{\lambda}{r}\}\right)$ such that $\pm x'_{\lambda,r}(T_{0,r}\pm \kappa)\geq 0$. By the mean value theorem, we therefore obtain that
\begin{align*}
    T_{\lambda,r}=T_{0,r}+o_{\lambda,r\rightarrow 0}\! \left(\frac{\lambda}{r}\right).
\end{align*}
Using $x_{0,r}(T_{0,r}-t)=-x_{0,r}(t)$ therefore yields
\begin{align*}
    \int_0^{T_{\lambda,r}} x_{0,r}\mathrm{d}t=\int_{T_{0,r}}^{T_{\lambda,r}} x_{0,r}\mathrm{d}t=o_{\lambda,r\rightarrow 0}(\lambda ).
\end{align*}
In combination with \eqref{Eq:approx_in_lambda} and \eqref{Eq:approx_in_r}, and the assumption \eqref{Eq:ass_lambda_r}, we obtain
\begin{align}
\nonumber
     \int_{0}^{T_{r,\lambda}}x_{r,\lambda}\mathrm{d}t & =\lambda \int_0^{T_{r,\lambda}}(1-\cos(t))\mathrm{d}t+o_{\lambda,r\rightarrow 0}(\lambda )\\
     \label{Eq:lambda_pi}
    & =\lambda \pi+o_{\lambda,r\rightarrow 0}\left(\lambda\right).
\end{align}

Returning to the solution $x=x_{\lambda,x(0)}$ of \eqref{Eq:EL_eq_orthogonal}, we obtain by \eqref{Eq:lambda_pi} together with the bounds $|\lambda|\leq \|x\|_\infty\leq C\epsilon^{1/4}$ that
\begin{align*}
    0=\int x(t)\mathrm{d}t=\lambda \left(\pi+o_{\epsilon\rightarrow 0}(1)\right),
\end{align*}
i.e. for $\epsilon$ small enough we conclude $\lambda=0$. \\

Finally, let us discuss the Hessian $\nabla^2 \widetilde{F}$. At $x=x_0$, we have 
\begin{align*}
  \nabla^2 \widetilde{F}(x_0)=(-\Delta)^{-1}(-\Delta-1),  
\end{align*}
which, as an operator on $\dot{H}^1((0,\ell))$, has exactly a single negative eigenvalue as long as $\ell=\pi+\epsilon\in (\pi,2\pi)$. For $x\in \{x_1,x_2\}$, let us assume w.l.o.g. that $r:=x(0)>0$. By \eqref{Eq_compare_harmonic}
\begin{align*}
    x(t)=r\cos(t)+o_{r\rightarrow 0}(r),
\end{align*}
and $T_{0,r}=\pi+\epsilon$, and in particular \eqref{Eq:for_T_0_r} implies
\begin{align*}
    r=\sqrt{\frac{8\epsilon}{3\pi}}+o_{\epsilon\rightarrow 0}(\sqrt{\epsilon}).
\end{align*}
In the following it is enough to verify that $0$ is not an eigenvalue of $\nabla^2 F(x)$, i.e. $\partial_w^2 F(x)>0$ for all $w\in \dot{H}\setminus \{0\}$. Using the operator 
\begin{align*}
  Z=P(-\Delta -1+ 3x^2)P ,
\end{align*}
where $P(w):=w-\frac{1}{\pi+\epsilon}\int_0^{\pi+\epsilon} w(s)\mathrm{d}s$, we can express
\begin{align*}
    \partial_w^2 F(x)=\langle w,Zw\rangle_{L^2}.
\end{align*}
We view $Z$ as a perturbation of $P(-\Delta -1)P$, which has the lowest eigenvalue 
\begin{align*}
   \left(\frac{\pi}{\pi+\epsilon}\right)^2-1=-\frac{2\epsilon}{\pi}+O_{\epsilon\rightarrow 0}(\epsilon^2) 
\end{align*}
and corresponding eigenstate 
\begin{align*}
  w(t):=\sqrt{\frac{2}{\pi+\epsilon}}\cos\! \left(\frac{\pi}{\pi+\epsilon}t\right)  .
\end{align*}
 Applying the Kato-Temple inequality, and using that the second eigenvalue of the unperturbed operator $P(-\Delta -1)P$ satisfies $1-o_{\epsilon\rightarrow 0}(1)$, we obtain that the lowest eigenvalue of $Z$ is given by
\begin{align*}
    \inf \sigma(Z) & =\left\langle w,Zw\right\rangle_{L^2}+O_{\epsilon\rightarrow 0}(\epsilon^2)\\
    & =\int_0^{\pi+\epsilon}\left((w')^2+w^2\right)\mathrm{d}t+3\int_0^{\pi+\epsilon}w(t)^2x(t)^2\mathrm{d}t+O_{\epsilon\rightarrow 0}(\epsilon^2)\\
    &= -\frac{2\epsilon}{\pi}+3\int_0^{\pi+\epsilon}\left(\sqrt{\frac{2}{\pi}}\cos (t)\right)^2 \left(\sqrt{\frac{8\epsilon}{3\pi}}\cos(t)\right)^2\mathrm{d}t+O_{\epsilon\rightarrow 0}(\epsilon^2)\\
    &= \frac{4 \epsilon}{\pi}+O_{\epsilon\rightarrow 0}(\epsilon^2),
\end{align*}
which concludes the proof for $\epsilon$ small enough.

\end{proof}

With Proposition \ref{Prop:Critical_points_CH} at hand, we are in a position to verify Corollary \ref{Cor:CH_result}.

\begin{corollary}
\label{Cor:CH_result}
Let $0<\ell<\pi$. Then
\[
   \liminf_{\beta\to\infty}\lambda_{\beta,1}^{(\mathrm{CH})}>0 .
\]
Furthermore, there exists $\epsilon_0>0$ such that, for
$\pi<\ell<\pi+\epsilon_0$,
\[
   \liminf_{\beta\to\infty}\lambda_{\beta,2}^{(\mathrm{CH})}>0 ,
\]
and, as $\beta\to\infty$,
\[
   \lambda_{\beta,1}^{(\mathrm{CH})}
   =
   \frac1\pi
   \frac{(\ell^2-\pi^2)\pi^2}{\ell^4}
   \sqrt{
      \frac{\det[\nabla^2\widetilde{F}(x_1)]}
           {|\det[\nabla^2\widetilde{F}(x_0)]|}
   }\,
   e^{-\beta(\widetilde{F}(x_0)-\widetilde{F}(x_1))}
   \left(1+O(\beta^{-1/2})\right).
\]
\end{corollary}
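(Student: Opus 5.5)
The plan is to reduce Corollary~\ref{Cor:CH_result} to Theorem~\ref{Thm:Main_Abstract_Theorem} and Theorem~\ref{Th:precise_Asy}, following the proof of Corollary~\ref{Cor:AC_result}. We work on the realization $(\widetilde B,\dot H^1((0,\ell)),\widetilde\gamma)$ with
\[
U(x)=\int_0^\ell u_*(x(s))\,ds,\qquad u_*(z)=u(z)-\tfrac12 z^2 .
\]
The only new features are the zero-average constraint and the mobility $\widetilde A=\widetilde K^2$.

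\textbf{Verifying the abstract assumptions.}
\begin{itemize}
\item $U$ is bounded below.
\item The growth bound \eqref{eq:expo_control} holds, since $\|DU(x)\|\le C(1+\|x\|_\infty)^3$.
\item $U\in C^3(\widetilde B)$ holds by the same continuity estimate as in \eqref{Eq:Final_Step_Second_Der_Cont}.
\item For the effective potential we write $\nabla^2U(x)=\widetilde K^{-1}P\mathcal M P$ on $\dot H^1$, with $\mathcal M$ multiplication by $u_*''(x)$. Then $\operatorname{tr}\nabla^2 U(x)\le \|\widetilde K^{-1}\|_{\mathrm{tr}}\,\|u_*''(x)\|_\infty$ up to constants. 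Combining this with $x u_*'(x)\ge x^4-Cx^2$ pointwise and absorbing, the lower bound $\beta^{-1}V_\beta\ge -C$ of \eqref{Eq:Th_Assumption} follows as in the Allen--Cahn computation. I expect the bookkeeping with the projection $P$ inside the trace to be the most delicate routine step. The trace should be bounded through the kernel of $\widetilde K^{-1}$, which is bounded on the diagonal, so that it is controlled by $\int u_*''(x(s))\,ds$ and the quartic term absorbs it.
\item The comparison bound \eqref{eq:assump_comp} follows from the Cauchy--Schwarz splitting in \eqref{Eq:comp_bound_A_square}. Now $A^{-1}=\widetilde K^{-2}$, and $\widetilde K^{-1}$ is controlled by $\widetilde A^{-1/2}$.
\item $\mathcal K_A$ is dense, and $\eta_{x_0}\in\mathcal K_A$, because smooth cosines $\cos(k\pi t/\ell)$, $k\ge1$, lie in every relevant domain.
\end{itemize}

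\textbf{Case $0<\ell<\pi$.} We show that $0$ is the only critical point. A critical point solves $x''=x^3-x+\lambda$ with Neumann boundary conditions. Testing with $x$ kills $\lambda$ by the zero mean and gives
\[
\int (x')^2+\int x^4=\int x^2 .
\]
By Poincaré, $\int(x')^2\ge(\pi/\ell)^2\int x^2>\int x^2$, which forces $x=0$. Moreover $\nabla^2\widetilde F(0)=(-\Delta)^{-1}(-\Delta-1)\ge 1-\ell^2/\pi^2>0$. Hence $k_0=1$, and Theorem~\ref{Thm:Main_Abstract_Theorem} yields $\liminf_\beta\lambda^{(\mathrm{CH})}_{\beta,1}>0$.

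\textbf{Case $\pi<\ell<\pi+\epsilon_0$.} Proposition~\ref{Prop:Critical_points_CH} gives the critical set $\{x_0,x_1,x_2\}$. Here $x_1,x_2$ are nondegenerate minima with equal energy, and $x_0$ is nondegenerate with index one, so $k_0=2$ and Theorem~\ref{Thm:Main_Abstract_Theorem} gives the gap for $\lambda_{\beta,2}$. Since there are only three critical points, the communication height is $h=\widetilde F(x_0)$, with $\mathcal T_h=\{x_0\}$ and $m_0=1$. The index-one point must separate the two components, by a mountain-pass argument together with the compactness of sublevel sets.

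Theorem~\ref{Th:precise_Asy}, formula \eqref{Eq:precise_Asy_1}, applies with $C_-=C_+$, so the prefactor is
\[
\frac{C_0\alpha}{\pi C_+}=\frac{\alpha}{\pi}\sqrt{\frac{\det\nabla^2\widetilde F(x_1)}{|\det\nabla^2\widetilde F(x_0)|}} .
\]
It remains to identify $\alpha$. From \eqref{Eq:Q_Operator} we have $Q=\widetilde A\,\nabla^2\widetilde F(x_0)=(-\Delta)(-\Delta-1)$, diagonal in $\cos(k\pi t/\ell)$ with eigenvalues $\mu_k(\mu_k-1)$, where $\mu_k=k^2\pi^2/\ell^2$. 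The only negative one is at $k=1$, giving
\[
\alpha=\frac{\pi^2}{\ell^2}\Bigl(1-\frac{\pi^2}{\ell^2}\Bigr)=\frac{(\ell^2-\pi^2)\pi^2}{\ell^4},
\]
which is exactly the claimed formula.
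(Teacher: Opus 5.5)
Your proposal is correct and follows the paper's route: verify Assumptions~\ref{Ass_for_theorem_I} and~\ref{Ass_for_theorem_II} as in the Allen--Cahn case, invoke Proposition~\ref{Prop:Critical_points_CH}, and apply the two abstract theorems. You add two things the paper leaves implicit: the energy-identity and Poincar\'e argument showing that $0$ is the only critical point when $\ell<\pi$, and the identification of $\alpha=\pi^2(\ell^2-\pi^2)/\ell^4$ as the negative eigenvalue of $Q=(-\Delta)(-\Delta-1)$. One small slip: the Cameron--Martin norm on $\dot H^1((0,\ell))$ is $\int (x')^2$ with no $L^2$ term, so the correct perturbation is $U(x)=\int_0^\ell u(x(s))\,ds$, not $\int_0^\ell u_*(x(s))\,ds$; this is what your later formula $\nabla^2\widetilde F(0)=(-\Delta)^{-1}(-\Delta-1)$ actually uses, and every check you list goes through verbatim with $u$ in place of $u_*$.
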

\begin{proof}
We note that the additional requirements in Theorem \ref{Th:precise_Asy} are met, as $\mathrm{dom}(\widetilde{A})\subseteq \mathrm{dom}(\widetilde{K})\subseteq (\widetilde{B})^*$ and therefore $\eta_x=\widetilde{A}^{-1}\nu_x\in (\widetilde{B})^*$ and 
\begin{align*}
    \widetilde{A}^{-1}\! \left((\widetilde{B})^*\right)\cap (\widetilde{B})^*\supseteq \mathrm{dom}\!\left(\widetilde{A}^2\right),
\end{align*}
which is dense in $\dot{H}^1((0, \ell))$. We further notice that \eqref{eq:assump_comp} is satisfied, by the bound in \eqref{Eq:comp_bound_A_square}. Considering first the case $\pi<\ell<\pi+\epsilon_0$, we know by Proposition \ref{Prop:Critical_points_CH}, for $\epsilon_0$ small enough, that $x_1$ and $x_2$ are the minima of $\widetilde{F}$ and that $\nabla^2\widetilde F(x)$ is invertible for all critical points $x\in \{x_0,x_1,x_2\}$. In case $0<\ell<\pi$ the situation is drastically more simple, as $x_0$ is then the only minimum and only critical point of $\widetilde{F}$, with
$\nabla^2\widetilde F(x_0)\geq c>0$. In both cases, Assumption \ref{Ass_for_theorem_II} is satisfied and the residual conditions in Assumption \ref{Ass_for_theorem_I} can be verified as in the proof of Corollary \ref{Cor:AC_result}.
\end{proof}

\section*{Acknowledgements}
\noindent
MB gratefully acknowledges funding from the UZH Postdoc Grant FK-25-103. GDG is grateful to Martin Grothaus for inspiring discussions and to the Bernoulli Center in Lausanne for its warm hospitality during several visits in the course of this work.

\end{document}